\DeclareMathAlphabet{\mathcal}{OMS}{cmsy}{m}{n}
\definecolor{redr}{RGB}{120,0,28}
\newcommand{\virg}[1]{``#1''}
\theoremstyle{plain}
\newtheorem{theorem}{Theorem}[section]
\newtheorem{lemma}[theorem]{Lemma}
\newtheorem{proposition}[theorem]{Proposition}
\newtheorem{corollary}[theorem]{Corollary}
\theoremstyle{definition}
\newtheorem{remark}[theorem]{Remark}
\newtheorem*{namedthm*}{\namedthmname}
\renewcommand{\div}{\mathrm{div}}
\newcommand{\dt}{\partial_t}
\newcommand{\dx}{\partial_x}
\newcommand{\dy}{\partial_y}
\newcommand{\dX}{\partial_X}
\newcommand{\dY}{\partial_Y}
\newcommand{\dtt}{\partial_{tt}}
\newcommand{\dyy}{\partial_{yy}}
\newcommand{\dXX}{\partial_{XX}}
\newcommand{\M}{M}
\newcommand{\dd}{\mathrm{d}}
\newcommand{\Dt}{\frac{\dd}{\dd t}}
\newcommand{\uu}{\boldsymbol{u}}
\newcommand{\vv}{\boldsymbol{v}}
\newcommand{\R}{\mathbb{R}}
\newcommand{\T}{\mathbb{T}}
\newcommand{\p}{p}
\newcommand{\norma}[2]{\left\lVert #1 \right\rVert_{#2}}
\newcommand{\hrho}{\widehat{\rho}}
\newcommand{\halpha}{\widehat{\alpha}}
\newcommand{\hXi}{\widehat{\Xi}}
\newcommand{\hOmega}{\widehat{\Omega}}
\newcommand{\hA}{\widehat{A}}
\newcommand{\hR}{\widehat{R}}
\renewcommand{\Re}{\operatorname{Re}}
\newcommand{\jap}[1]{\left\langle #1 \right\rangle}
\newcommand{\norm}[1]{\left\lVert #1 \right\rVert}
\patchcmd{\section}{\scshape}{\large\bfseries}{}{}
\renewcommand{\@secnumfont}{\bfseries}
\numberwithin{equation}{section}
\patchcmd{\subsubsection}{\itshape}{\itshape\bfseries}{}{} 
\xapptocmd{\@sect}{\csname #1mark\endcsname{#7}}{}{}
\def\maketitle{\par
	\begingroup
	\def\thefootnote{\fnsymbol{footnote}}%
	\setcounter{footnote}\z@
	\def\@makefnmark{\hbox to\z@{$\m@th^{\@thefnmark}$\hss}}%
	\long\def\@makefntext##1{\noindent
		\ifnum\c@footnote>\z@\relax
		\hbox to1.8em{\hss$\m@th^{\@thefnmark}$}##1%
		\else
		\hbox to1.8em{\hfill}%
		\parbox{\dimexpr\linewidth-1.8em}{\raggedright ##1}%
		\fi}
	\if@twocolumn\twocolumn[\@maketitle]%
	\else\newpage\global\@topnum\z@\@maketitle\fi
	\thispagestyle{titlepage}\@thanks\endgroup
	\setcounter{footnote}\z@
	\gdef\@date{\today}\gdef\@thanks{}%
	\gdef\@author{}\gdef\@title{}}
\author{Paolo Antonelli \textsuperscript{*}}
\author{Michele Dolce \textsuperscript{*,$\dagger$}}
\author{Pierangelo Marcati\textsuperscript{*}}
\title{Linear stability analysis of the homogeneous Couette flow in a 2D isentropic compressible fluid \footnote{\normalfont GSSI - Gran Sasso Science Institute, Viale Francesco Crispi 7, 67100, L'Aquila, IT} \footnote{\normalfont Imperial College London, Department of Mathematics, London, SW7 2AZ, UK}}
\address{GSSI - Gran Sasso Science Institute, Viale Francesco Crispi 7, 67100, L'Aquila, IT}
\email{paolo.antonelli@gssi.it}
\address{Imperial College London, Department of Mathematics, London, SW7 2AZ, UK}
\email{m.dolce@imperial.ac.uk}
\address{GSSI - Gran Sasso Science Institute, Viale Francesco Crispi 7, 67100, L'Aquila, IT}
\email{pierangelo.marcati@gssi.it}
\subjclass[2010]{35Q31, 35Q35, 76N99}
	\keywords{2D Compressible Euler, Couette flow, Shear flows, Linear stability, Hydrodynamic stability}
\begin{document} 
	\maketitle
	\begin{abstract}
		 In this paper, we study the linear stability properties of perturbations around the homogeneous Couette flow for a 2D isentropic compressible fluid in the domain $\mathbb{T}\times \mathbb{R}$.
		 
		  In the inviscid case there is a generic Lyapunov type instability for the density and the irrotational component of the velocity field. More precisely, we prove that their $L^2$ norm grows as $t^{1/2}$ and this confirms previous observations in the physics literature. Instead, the solenoidal component of the velocity field experience inviscid damping, meaning that it decays to zero even in the absence of viscosity.  
		 
		For a viscous compressible fluid, we show that the perturbations may have a transient growth of order $\nu^{-1/6}$ (with $\nu^{-1}$ being proportional to the Reynolds number) on a time-scale $\nu^{-1/3}$, after which it decays exponentially fast. This phenomenon is also called enhanced dissipation and our result appears to be the first to detect this mechanism for a compressible fluid, where an exponential decay for the density is not a priori trivial given the absence of dissipation in the continuity equation.
	\end{abstract}
\tableofcontents
	\section{Introduction}
	\label{sec:intro}
		   We consider the isentropic compressible Navier-Stokes system 
	\begin{align}
		\label{eq:continuity} &\dt \tilde{\rho} +\div (\tilde{\rho} \uu)=0,\ \ \ \text{for }  (x,y)\in\mathbb{T}\times \mathbb{R}, \ t\geq 0,\\
		\label{eq:momentum} &\dt (\tilde{\rho} \uu )+\div (\tilde{\rho} \uu\otimes \uu)+\frac{1}{\M^2}\nabla p(\tilde{\rho})=\nu\Delta\uu+\lambda \nabla \div(\uu) ,
	\end{align}
	 in a periodic strip where $\mathbb{T}=\mathbb{R}/\mathbb{Z}$. Here, $\tilde{\rho}$ is the density of the fluid, $\uu$ the velocity, $p(\tilde{\rho})$ the pressure, $\M$ is the Mach number and $\nu,\lambda\geq 0$ are the shear and bulk viscosities coefficients respectively. The shear viscosity is proportional to the inverse of the Reynolds number. When $\nu=\lambda=0$ we are reduced to the Euler system.
	
	A stationary solution to \eqref{eq:continuity}-\eqref{eq:momentum} is given by the \textit{homogeneous Couette flow}, namely a shear flow with a linear velocity profile $u_E=(y,0)$ with constant density $\rho_E=1$. We are interested in studying the linear stability properties of this flow. Therefore, we consider a perturbation around it which is given by
	\begin{equation*}
		\widetilde{\rho}=\rho_E+\rho, \qquad
		\uu=\uu_E+\vv,
	\end{equation*}
	for $\tilde{\rho}, \uu$ satisfying \eqref{eq:continuity}-\eqref{eq:momentum}.  
	
	The linearized system around the homogeneous Couette flow read as follows 
		\begin{align}
			\label{eq:lincouette1}&\dt \rho+y\dx \rho+\div (\vv)=0, \ \ \ \text{for }  (x,y)\in \mathbb{T}\times \mathbb{R}, \ t\geq 0,\\
			\label{eq:lincouette2}&\dt \vv+y\dx \vv+\begin{pmatrix}
				v^y\\ 0
			\end{pmatrix}+\frac{1}{M^2}\nabla \rho=\nu \Delta \vv+\lambda \nabla \div(\vv),
		\end{align}
	where we set $p'(1)=1$. 
	
The study of linear stability properties of particular solutions to the equations governing the motion of a fluid is a classical topic in \textit{hydrodynamic stability theory} \cite{drazin2004hydrodynamic,yaglom2012hydrodynamic}.	

For an incompressible fluid, the linear analysis for the Couette flow was already done by Kelvin \cite{kelvin1887stability} in the 1887. Other classical results have been obtained via an eigenvalue (or normal mode) analysis in many different cases, however, the classical stability analysis in general does not agree with the numerical and physical observations \cite{drazin2004hydrodynamic,yaglom2012hydrodynamic,trefethen1993hydrodynamic}. For instance, Trefethen et al. \cite{trefethen1993hydrodynamic} observed that a common feature in these problems is the non-normality of the operators involved. In particular, this implies the possibility of large \textit{transient growths} (which are not captured via a pure eigenvalue analysis) that can take out the dynamics from the linear regime before the stability mechanisms takes over. The Couette flow is the simpler flow where these phenomenon are present, therefore the stability analysis of this particular case is the prototypical example to understand some of the mechanisms involved in the dynamics.

The analysis of the full nonlinear problem is extremely challenging even in the simpler cases. For an incompressible and homogeneous fluid in the Euler regime, Arnold \cite{arnold1965conditions} obtained an elegant stability result for a particular class of shear flows. However, some relevant flows, such as the Couette one, do not belong to this class. In the last ten years, the problem received a renewed attention. For the Couette flow, Lin and Zeng \cite{lin2011inviscid} showed the existence of stationary solutions which are not shear flows and are arbitrarily close to the Couette flow in a sufficiently low regularity space ($H^{3/2^{-}}$ for the vorticity), meaning that a perturbation around Couette may not converge towards a shear flow. A breakthrough in the understanding of the nonlinear stability properties of the planar Couette flow has been made by Bedrossian and Masmoudi \cite{bedrossian2015inviscid}. In particular, in the domain $\T\times \R$ they proved the asymptotic stability of the vorticity in a high-regularity space (Gevrey-$1/2^-$) which implies the \textit{inviscid damping} of the velocity field. Namely, the vorticity is \textit{mixed} by the background flow and the velocity field strongly converge in $L^2$ to a shear flow \textit{close} to Couette with polynomials rates of convergence. This phenomenon share analogies with the \textit{Landau damping} \cite{caglioti1998time,MV11,bedrossian2016landau,grenier2020landau}. For more general shear flows the analysis is highly non-trivial even at a linear level \cite{wei2018linear,WZZ17,jia2020linear,zillinger2016linear} and nonlinear inviscid damping results have been obtained very recently \cite{ionescu2020nonlinear,masmoudi2020nonlinear}. Linear inviscid damping results are available also for the Couette flow \cite{yang2018linear} and shear near Couette in a 2D inhomogeneous incompressible fluid  \cite{bianchini2020linear}.

When viscosity is present more stability results are available, for instance the nonlinear Couette case was studied by Romanov in the '70s \cite{romanov1973stability}. The stability mechanism present at the inviscid level can also combine with the dissipation and one observes an \textit{enhanced dissipation} of some components of the perturbations around the equilibrium. This is possible since the advection causes an energy cascade towards small spatial scales where dissipation takes over. In addition, due to the observed transient growths, a question of great interest in the viscous problem is a quantification of \textit{transition thresholds}, namely how small the initial perturbation has to be with respect to the viscosity parameter. On this side, several numerical studies predicted a power law dependence and estimated the exponents below which stability is possible. In the last ten years, enhanced dissipation and transition thresholds results have been proved in several cases and we refer to \cite{bedrossian2019stability} for a detailed literature review on the known results until 2017. More recent results, including vortices, inhomogeneous incompressible fluids and passive scalar problems related to fluid dynamics can be found in \cite{bedrossian2020he,cotizelati2020separation,cotizelati2020enhancedNS,zelati2019stochastic,chen2018transition,chen2020transition,chen2019linear,gallay2020enhanced,deng2020stability,li2020pseudo,masmoudi2020stability,zillinger2020boussinesq,zillinger2020enhanced}. 

In the compressible case the literature is significantly less developed with respect to the incompressible one. The extension of the standard stability analysis to the compressible case has been already considered starting from the '40s  \cite{lees1946investigation,eckart1963extension,blumen1970shear,blumen1975shear,drazin1977shear,subbiah1990stability}.	In the review paper \cite{holm1985nonlinear} there is the extension of Arnold's method for a 2D isentropic compressible fluid. However, at least to our knowledge, a complete non-modal mathematical analysis of the Couette flow was not considered previously in the literature. The linearization around the Couette flow in the 2D isentropic compressible Euler dynamics was instead considered in the physics literature, both from the numerical point of view and from the theoretical one, a highly incomplete list of papers includes \cite{bakas2009mechanisms,chagelishvili1994hydrodynamic,chagelishvili1997linear,hau2015comparative,hanifi1996transient} and references therein. In particular, the 2D inviscid problem (with an additional Coriolis forcing term) has been considered as a first model to understand the formation of \textit{spiral arms} in a rotating disk galaxy by Goldreich and Lynden-Bell \cite{goldreich1965gravitational,goldreich1965ii}. In \cite[Sec. 5-6]{goldreich1965ii} they directly consider the linearized initial value problem and they derive a second order ODE satisfied by the density in the Fourier space. From this equation, appealing to some formal approximation, they deduce an instability phenomenon that appears specifically due to the compressibility of the flow. More precisely, they obtain that $|\rho(t)|\sim O(t^{1/2})$. The problem (without the Coriolis force) was then studied also by Chagelishvili et al. in \cite{chagelishvili1994hydrodynamic,chagelishvili1997linear} where, with analogous computations, it is observed that $$|\rho(t)|+|\vv(t)|\sim O(t^{1/2}).$$ In \cite{bodo2005spiral} the analysis of Goldreich and Lynden-Bell was revisited, supplemented with numerical simulations and are also highlighted similarities to the case studied in \cite{chagelishvili1997linear}. Recently, more refined numerical simulations and analysis can be found in \cite{bakas2009mechanisms,hau2015comparative}, where the results are justified with some formal asymptotic expansion. We notice that in the results mentioned above the asymptotic expansions are formally justified if $M\ll1$, whereas in astrophysical applications the interesting regimes are for $M>1$ \cite{bodo2005spiral,makita2000two}.
	
	For the viscous compressible plane Couette flow, Glatzel in 1988 \cite{glatzel1989linear} has investigated linear stability properties via a normal mode analysis, see also \cite{duck1994linear,hu1998linear}. Hanifi et al. in \cite{hanifi1996transient} have numerically investigated a transient growth mechanism in the \textit{non-isothermal} case, showing that the maximum transient growth scales as $O(\nu^{-2})$ and increases with increasing Mach
	number, see also the more recent result \cite{malik2008linear}. Then, Farrell and Ioannou in \cite{farrell2000transient} considered the linear problem \eqref{eq:lincouette1}-\eqref{eq:lincouette2} and showed a rapid transient energy growth, that at large Mach numbers greatly exceed the expected one in the incompressible case, which is then damped due to the effect of viscosity. By some heuristic argument, the authors have also observed that the transient growth is due to purely inviscid and compressible effects in agreement with \cite{chagelishvili1994hydrodynamic,chagelishvili1997linear}. In addition, in the numerical simulations shown in \cite[Fig. 1]{farrell2000transient} the author consider as parameters $M=\{0,50\}$ and $\nu^{-1}=5000$ (where $M=0$ correspond to the incompressible dynamics) and in both cases the perturbation decay on a time-scale much faster with respect to the standard diffusive one (which is $O(\nu^{-1})$). In the viscous incompressible case it is well known \cite{bedrossian2019stability} that the dissipation become effective on a time-scale $O(\nu^{-\frac13})$, whereas for the compressible case a precise quantification of the time-scale is not known. Lastly, in the mathematics literature we mention the more recent results obtained by Kagei \cite{kagei2011asymptotic,kagei2012asymptotic}, where the Couette flow is generated by the top plate of an infinite channel, in dimension $n$, moving along the $x$-direction with constant velocity and with the bottom plate fixed. The author proves an asymptotic stability result for small Mach and Reynolds numbers. The conditions on the parameters have been relaxed by Li and Zhang \cite{li2017stability} imposing Navier-slip boundary conditions at the bottom plate.
	\subsection{Statement of the results}
	In this paper, we confirm  and make more precise the linear inviscid instability phenomena found in the above mentioned literature. In particular, we are able to prove that the solenoidal component of the velocity field experience inviscid damping whereas the irrotational component and the density have a linear Lyapunov instability for a \textit{generic} class of initial data. Then, in the viscous case, we confirm the observations made in \cite{farrell2000transient} by showing that dynamics is qualitatively the same to the inviscid case up to a time-scale $O(\nu^{-\frac13})$, after which viscosity become effective and the perturbations decay exponentially fast. This is not a priori trivial in view of the absence of viscosity in the continuity equation. 
	
	Before stating our main result, for any velocity field $\vv$, we denote 
	\begin{equation*}
		\alpha=\div (\vv), \qquad \omega=\nabla^\perp\cdot \vv.
	\end{equation*}
	The Helmholtz projection operators are defined in the usual way, namely 
	\begin{equation}
		\label{Helmholtzint}
		\vv=(v^x,v^y)^T=\nabla \Delta^{-1}\alpha+\nabla^\perp \Delta^{-1}\omega:= Q[\vv]+P[\vv],
	\end{equation} 
	where $\nabla^\perp=(-\dy,\dx)^T$. The system \eqref{eq:lincouette1}-\eqref{eq:lincouette2}, in terms of $(\rho, \alpha, \omega)$ read as
	\begin{align}
		\label{eq:contcouetteint}&\dt \rho +y\dx\rho+\alpha=0,\qquad  \ \ \ \text{for }  (x,y)\in \mathbb{T}\times \mathbb{R}, \ t\geq 0, \\
		\label{eq:divcouetteint}
		&\dt \alpha+y\dx \alpha+2\dx v^y+\frac{1}{\M^2}\Delta\rho=(\nu+\lambda)\Delta\alpha,\\
		&\label{eq:vorticitycouetteint}	\dt \omega+y\dx \omega-\alpha=\nu\Delta \omega.	
	\end{align}	
	The second component of the velocity $v^y$ can be recovered by means of  the Helmholtz decomposition 
	\begin{equation}\label{eq:v2}
		v^y=\dy(\Delta^{-1})\alpha+\dx(\Delta^{-1})\omega,
	\end{equation}
	hence \eqref{eq:contcouetteint}-\eqref{eq:vorticitycouetteint} is a closed system in terms of the variables $(\rho,\alpha,\omega)$. Notice that when $\nu=0$ one has 
	\begin{equation}
		\label{eq:consrom}
		(\dt+y\dx) (\rho+\omega)=0,
		\end{equation}
	 namely, there is an extra conservation law along the flow. We comment about this important feature in the sequel.
	
	In the following, we are going to denote
	\begin{equation}
		\label{def:f0}
		f_0(y)=\frac{1}{2\pi}\int_{\mathbb{T}}f(x,y)dx.
	\end{equation}
	For the sake of brevity, we do not explicit the dependence of the bounds with respect to the Sobolev norms of the initial data. We simply write $C_{in}=C_{in}(\rho^{in},\alpha^{in},\omega^{in})$ to indicate a suitable combination of Sobolev norms of the initial data. Those constants may also depend on quantities increasing with respect to the Mach number, namely $(1+M)^\beta$ or $\exp( M^\beta)$ for some $\beta\geq 1$.
	A more precise statement of the theorem below will be given in Sections \ref{sec:invcompcouette} and \ref{sec:visccompcouette}. 
	
	\begin{theorem}
		\label{th:NScouetteintro}
		Let  $\nu,\lambda \geq 0$ and $M>0$ be such that $\nu+\lambda \leq 1/2$ and $M\max\{(\nu+\lambda)^{\frac12},\nu^{\frac13}\} \leq 1$. Let $\rho^{in} \in H^{7}(\mathbb{T}\times \mathbb{R})$ and $\alpha^{in}, \omega^{in} \in H^{6}(\mathbb{T}\times \mathbb{R})$. Then, the $x$-average of the solution satisfy
		\begin{align}
			\label{bd:k01}
			\norm{\alpha_0(t)}_{L^2}+\frac{1}{M}\norm{\dy \rho_0(t)}_{L^2}\leq&\  \frac{C_{in}}{(1+\nu t)^\frac12},\\
			\label{bd:k02}\norm{v^y_0(t)}_{L^2}+\frac{1}{M}\norm{\rho_0(t)}_{L^2}+\norm{\omega_0(t)}_{L^2}\leq &\ C_{in}.
		\end{align} 
		For the fluctuations around the $x$-average the following inequalities holds:
		\begin{align}
			\label{bd:compenh}
			&\norma{(Q[\vv]-Q[\vv]_0)(t)}{L^2}+\frac{1}{\M}\norma{(\rho-\rho_0)(t)}{L^2}\leq \ \langle t \rangle^{\frac12} e^{-\frac{1}{32}\nu^\frac13 t}C_{in},\\
			\label{bd:P1enh}&\norma{(P[\vv]^x-P[\vv]^x_0)(t)}{L^2}\leq \ M\frac{e^{-\frac{1}{64}\nu^\frac13 t}}{\langle t \rangle^{\frac12}} C_{in}+\frac{e^{-\frac{1}{12}\nu^\frac13 t}}{\langle t\rangle}(\norm{\omega^{in}+\rho^{in}}_{H^1}),\\
			\label{b:P2enh}&\norma{P[\vv]^y(t)}{L^2}\leq \ M\frac{e^{-\frac{1}{64}\nu^\frac13 t}}{\langle t\rangle^\frac32} C_{in}+\frac{e^{-\frac{1}{12}\nu^\frac13 t}}{\langle t \rangle^2}(\norm{\omega^{in}+\rho^{in}}_{H^2}).
		\end{align}
		Let $\nu=\lambda=0$  and $s\geq 0$. Up to a generic (in the sense of Baire) set of initial data $\rho^{in},\alpha^{in},\omega^{in} \in H^s(\T \times \R)$ one has 
		\begin{equation}
			\label{bd:lwcomp}
			\norma{(Q[\vv]-Q[\vv]_0)(t)}{L^2}+\frac{1}{\M}\norma{(\rho-\rho_0)(t)}{L^2}\geq\langle t \rangle^{\frac12 } C_{in}. 
		\end{equation}
	\end{theorem}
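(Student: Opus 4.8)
\emph{Proof strategy for \eqref{bd:lwcomp}.} Set $\nu=\lambda=0$. Since $Q[\vv]-Q[\vv]_0$ and $\rho-\rho_0$ only carry the frequencies $k\neq0$, the plan is to study \eqref{eq:contcouetteint}--\eqref{eq:vorticitycouetteint} one Fourier mode at a time and then conclude by a soft Baire-category argument. First I would Fourier transform in $x$ (frequency $k\in\mathbb Z\setminus\{0\}$) and $y$ (frequency $\eta\in\R$) and pass to the frame moving with the shear; this is an $L^2$-isometry, turns $\dt+y\dx$ into a pure time derivative, and turns the Laplacian into the time-dependent symbol $-\ell(t)^2$ with $\ell(t)^2:=k^2+(\eta-kt)^2$. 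Denoting by $\hrho,\halpha,\homega$ the Fourier coefficients of the corresponding profiles, the extra conservation law \eqref{eq:consrom} gives $(\hrho+\homega)(t,k,\eta)=\Gamma(k,\eta)$, constant in time; this eliminates $\homega$ and, as in Section~\ref{sec:invcompcouette}, reduces the system to a single forced second-order ODE of Goldreich--Lynden-Bell type,
\begin{equation*}
	\ddot{\hrho}-\frac{(\ell^2)'}{\ell^2}\,\dot{\hrho}+\Big(\frac{2k^2}{\ell^2}+\frac{\ell^2}{\M^2}\Big)\hrho=\frac{2k^2}{\ell^2}\,\Gamma,
	\qquad \halpha=-\dot{\hrho},\qquad \widehat{Q[\vv]}=\frac{(ik,\,i(\eta-kt))}{-\ell^2}\,\halpha .
\end{equation*}

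The crux of the matter is the large-time analysis of this ODE. The substitution $\hrho=\ell\,V$ removes the first-order term and produces the Liouville--Green normal form $\ddot V+\tfrac{\ell^2}{\M^2}\,V=(\text{remainder})$; since $\ell(t)\to\infty$ and the relevant adiabaticity parameter is integrable in time, uniformly in $(k,\eta)$, the remainder and the decaying forcing are controlled and the WKB/adiabatic approximation applies. With $\Theta(t,k,\eta):=\tfrac1\M\int_0^t\ell(s)\,\dd s$ one obtains
\begin{equation*}
	V(t)=\Big(\tfrac{\ell(t)}{\M}\Big)^{-\frac12}\big(c_+(k,\eta)\,e^{i\Theta(t)}+c_-(k,\eta)\,e^{-i\Theta(t)}\big)+o\big(t^{-\frac12}\big),
\end{equation*}
where the scattering coefficients $c_\pm(k,\eta)$ depend linearly on $(\widehat{\rho^{in}},\widehat{\alpha^{in}},\widehat{\omega^{in}})(k,\eta)$ through a surjective map, so they do not vanish identically. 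Differentiating, and using $\halpha=-\dot{\hrho}$, $|\widehat{Q[\vv]}|=|\halpha|/\ell$ and $\ell\sim|k|t$, one finds that $\tfrac1\M|\hrho(t)|$ and $|\widehat{Q[\vv]}(t)|$ are both of size $t^{1/2}$ but oscillate \emph{in quadrature}: up to the same prefactor $\simeq(|k|/\M)^{1/2}t^{1/2}$ they behave like $|c_+e^{i\Theta}+c_-e^{-i\Theta}|$ and $|c_+e^{i\Theta}-c_-e^{-i\Theta}|$ respectively. The parallelogram identity $|a+b|^2+|a-b|^2=2|a|^2+2|b|^2$ then removes the oscillation in precisely the combination occurring in \eqref{bd:lwcomp}:
\begin{equation*}
	\frac1{\M^2}\,|\hrho(t,k,\eta)|^2+|\widehat{Q[\vv]}(t,k,\eta)|^2=t\,\big(\mathcal I(k,\eta)+o(1)\big),\qquad
	\mathcal I(k,\eta):=\tfrac{2|k|}{\M}\big(|c_+(k,\eta)|^2+|c_-(k,\eta)|^2\big)\ \ge\ 0 .
\end{equation*}
I expect this modewise step to be the main obstacle: one must make the Liouville--Green reduction and the scattering asymptotics quantitative, control the forcing, verify that there is no hidden cancellation so that $\mathcal I$ is a genuine non-negative quadratic form of the data, and track the $\M$-dependence of the constants (this is where $\exp(\M^\beta)$-type factors enter the $C_{in}$).

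Finally I would globalise and invoke Baire category. By the $L^2$-isometry and Fatou's lemma (counting measure in $k$, Lebesgue measure in $\eta$) applied to the modewise identity,
\begin{equation*}
	\liminf_{t\to\infty}\ \frac1t\Big(\tfrac1{\M^2}\norma{(\rho-\rho_0)(t)}{L^2}^2+\norma{(Q[\vv]-Q[\vv]_0)(t)}{L^2}^2\Big)\ \ge\ \sum_{k\neq0}\int_\R\mathcal I(k,\eta)\,\dd\eta\ =:\ J(\rho^{in},\alpha^{in},\omega^{in}) .
\end{equation*}
Here $J$ is a non-negative, possibly $+\infty$-valued quadratic form on $H^s(\T\times\R)^3$; it is lower semicontinuous (a supremum of continuous finite partial sums) and, by the previous step, not identically zero. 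Hence its null set $\{J=0\}$ --- the kernel of the associated positive-semidefinite bilinear form --- is a proper closed linear subspace, therefore nowhere dense, and $\{J>0\}$ is open and dense, hence residual. For every triple in $\{J>0\}$, the elementary inequality $(a+b)^2\ge a^2+b^2$ applied to the two summands yields $\liminf_{t\to\infty}\langle t\rangle^{-1/2}\big(\norma{(Q[\vv]-Q[\vv]_0)(t)}{L^2}+\tfrac1\M\norma{(\rho-\rho_0)(t)}{L^2}\big)\ge\sqrt{J}>0$, which is \eqref{bd:lwcomp} with a constant $C_{in}=C_{in}(\rho^{in},\alpha^{in},\omega^{in})>0$. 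Together with the $\nu=0$ case of \eqref{bd:compenh}, which gives the matching upper bound $\langle t\rangle^{1/2}C_{in}$, this shows the rate $t^{1/2}$ is sharp.
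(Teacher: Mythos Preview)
Your approach is coherent but takes a genuinely different route from the paper on both the modewise estimate and the genericity step.

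For the modewise growth, you rely on a Liouville--Green/WKB analysis of the second-order ODE for $\hat\rho$ and extract scattering coefficients $c_\pm$. The paper records exactly this heuristic in Remark~\ref{rem:waveR} as the formal approach of the physics literature, justified there only for $M\ll1$, and stresses that one contribution of the paper is to remove that restriction without any asymptotic expansion. Instead, the paper symmetrizes the $2\times2$ first-order system by setting $Z=(M^{-1}p^{-1/4}\hat R,\,p^{-3/4}\hat A)$, which produces a trace-free coefficient matrix $L(t)$; the key Lemma~\ref{keylemma} then builds an almost-conserved quadratic functional $E(t)$ and, via a short Gr\"onwall argument exploiting $\int|\dt\log\zeta|+\int|\dt(a/\beta)|<\infty$ uniformly in $k,\eta$, proves the \emph{two-sided} bound $c_2|Z^{in}|\le|\Phi_L(t,0)Z^{in}|\le C_2|Z^{in}|$ for all $t,k,\eta$ and all $M>0$. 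The lower bound in Theorem~\ref{maintheoremlwz} then follows directly from $|Z(t)|\ge c\,|\Gamma(t)|$ with $\Gamma(t)=Z^{in}+\int_0^t\Phi_L(0,s)F(s)\hat\Xi^{in}\,ds$ and the elementary inequality $\sqrt{p}\ge\langle\eta-kt\rangle\gtrsim\langle t\rangle/\langle\eta\rangle$. Your scattering approach can in principle be made rigorous via Olver-type error bounds (the integral of the adiabaticity control scales like $M/|k|$, finite but $M$-dependent), and would recover the same $\exp(M^\beta)$-type constants; the paper's energy method simply bypasses this analysis.

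For genericity, you invoke an abstract Baire argument on the quadratic form $J=\sum_k\int\mathcal I$, which is clean and correct as stated. The paper instead argues constructively (Proposition~\ref{prop:lwdensity}): at each fixed $(k,\eta)$ with $\hat\Xi^{in}\neq0$ it shows that the curve $t\mapsto\Gamma(t,k,\eta)$ converges as $t\to\infty$ and, since $|\dt\Gamma(t)|=|\Phi_L(0,t)F(t)\hat\Xi^{in}|\ge c|F(t)||\hat\Xi^{in}|>0$ on compacta, can vanish only at finitely many times; it then exhibits an explicit $\epsilon$-perturbation of the Fourier data at that frequency, transverse to every $\dt\Gamma(t_i)$, that forces $|\Gamma^\epsilon(t,k,\eta)|\ge\tfrac12\epsilon\,e^{-(k^2+\eta^2)}$ for \emph{all} $t\ge0$. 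This buys something your argument does not: the resulting lower bound \eqref{bd:lwcomp} holds uniformly in $t$, whereas your $\liminf$/Fatou argument only yields $\liminf_{t\to\infty}\langle t\rangle^{-1/2}(\cdots)>0$. In particular it does not exclude finite-time zeros of the left-hand side (e.g.\ if $\hat\rho^{in}_{\neq}=\hat\alpha^{in}_{\neq}=0$ but $\hat\omega^{in}_{\neq}\neq0$ the left side vanishes at $t=0$), so a small additional argument would be needed to upgrade your asymptotic statement to the uniform-in-$t$ form of \eqref{bd:lwcomp}.
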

	Notice that when $\M=0$ (and $\alpha^{in}=\rho^{in}=0$), formally the estimates \eqref{bd:P1enh}-\eqref{b:P2enh} give the same result that one has in the incompressible case \cite{bedrossian2019stability}.
	
	We remark that the dynamics of the $x$-averages decouples with respect to fluctuations around it, as we will show in Section \ref{sec:zeromode} where we comment more about the evolution of the zero $x$-mode.
	
	In the following, we discuss the results given in the theorem above and we outline the strategy of proof by considering separately the inviscid and the viscous case, which we investigate in Section \ref{sec:invcompcouette} and Section \ref{sec:visccompcouette} respectively.
	\subsection{{Inviscid case}}
	For $\nu=\lambda =0$, the estimates \eqref{bd:compenh} and \eqref{bd:lwcomp} give the first rigorous justification to the growth predicted in \cite{bakas2009mechanisms,chagelishvili1994hydrodynamic,chagelishvili1997linear,hau2015comparative}, where, in order to implement a WKB asymptotic analysis,  the authors had to restrict themselves to a small Mach number regime. The result in the inviscid case was announced in our unpublished note \cite{antonelli2020linear}. We emphasize that the result stated in Theorem \ref{th:NScouetteintro} is actually more general since it removes the smallness  assumption on the Mach number. We also see that only the density and the irrotational part of the velocity field are growing, whereas in \eqref{bd:P1enh}-\eqref{b:P2enh} we show an inviscid damping result for the solenoidal component of the velocity, with slower decay with respect to the incompressible case. Indeed, this slow down of the inviscid damping is exactly compensated by the time growth of the compressible part of the fluid, as we explain in Remark \ref{rem:potvort} below.
		\begin{remark}
		For some particular initial data the lower bound in \eqref{bd:lwcomp} may not be valid. However, as we shall see in Proposition \ref{prop:lwdensity}, we are able to explicitly construct an arbitrary small perturbation of the initial data, at any fixed frequency $k,\eta$, for which the lower bound holds true. Therefore the set to exclude is nowhere dense in any Sobolev space in which the initial data is taken, which implies a \textit{generic} (in the sense of Baire category) Lyapunov type instability.
	\end{remark}
	\subsubsection{On the conservation of $\rho+\omega$}
			\label{rem:potvort}
	As observed in \eqref{eq:consrom}, when $\nu=0$ then $\rho+\omega$ is conserved along the Couette flow, namely 
	\begin{equation}
		\label{eq:conrrom}
		(\rho+\omega)(t,x+yt,y)=(\rho^{in}+\omega^{in})(x,y).
	\end{equation}
	This relation was clearly observed also in \cite{goldreich1965gravitational,goldreich1965ii,chagelishvili1994hydrodynamic,chagelishvili1997linear,bakas2009mechanisms,hau2015comparative}. In particular, in \cite{chagelishvili1997linear,bodo2005spiral} the authors notice that this conservation law cause an ``\textit{emergence of acoustic waves from vortices}". Indeed, \eqref{eq:conrrom} immediately connects compressible and incompressible phenomena. Namely, an increase of the vorticity need to be compensated by a decrease for the density and the other way around. In addition, Theorem \ref{th:NScouetteintro} implies that the density and the irrotational part of the velocity exhibit a growth in time even when the initial perturbation satisfies $\rho^{in}=\alpha^{in}=0$. This can be seen from the linearized equations \eqref{eq:contcouetteint}-\eqref{eq:vorticitycouetteint}, where the identity \eqref{eq:v2} for $v^y$ yields a source term, depending on the vorticity, in the equation for the divergence \eqref{eq:divcouetteint}. 
	 This interplay between density and vorticity is also the cause of the slow-down of the inviscid damping for the solenoidal component of the velocity with respect to the homogeneous incompressible case.
 
 We also point out that \eqref{eq:conrrom} can be seen as the linear analogue of the \textit{potential vorticity}, i.e. $\widetilde{\omega}/\widetilde{\rho}$, being transported along the flow, where $\widetilde{\omega}=\nabla^\perp \cdot \uu$ for $\widetilde{\rho}, \uu$ satisfying \eqref{eq:continuity}-\eqref{eq:momentum}. Indeed, a direct computation shows that 
		\begin{equation}
           \label{eq:potvort}
			\dt \left(\frac{\widetilde{\omega}}{\widetilde{\rho}}\right)+\uu\cdot \nabla \left(\frac{\widetilde{\omega}}{\widetilde{\rho}}\right)=0.
		\end{equation}  
		Then, since we are considering perturbations around the Couette flow we have 
		\begin{equation}
			\widetilde{\omega}=-1+\omega, \qquad \widetilde{\rho}=1+\rho.
		\end{equation}Consequently, writing down \eqref{eq:potvort} in Lagrangian coordinates we deduce 
		\begin{equation*}
			\omega(t,\mathbf{X}(\mathbf{x}_{in},t))+\frac{1-\omega^{in}(\mathbf{x}_{in})}{1+\rho^{in}(\mathbf{x}_{in})}\rho(t,\mathbf{X}(\mathbf{x}_{in},t))=\frac{\rho^{in}(\mathbf{x}_{in})+\omega^{in}(\mathbf{x}_{in})}{1+\rho^{in}(\mathbf{x}_{in})},
		\end{equation*}
		where $\mathbf{X}$ is the flow associated to $u$, given by 
		\begin{align*}
			&\frac{d}{dt} \mathbf{X}(t,\mathbf{x}_{in})=\uu(\mathbf{X}(t,\mathbf{X}_{in}))\\
			&\mathbf{X}(0,\mathbf{x}_{in})=\mathbf{x}_{in}.
		\end{align*}
	 Assuming that we are in perturbative regime, namely $|\omega^{in}|\ll 1$ and $|\rho^{in}|\ll 1$ one has $(1-\omega^{in})/(1+\rho^{in})\approx 1$. Hence, by the previous heuristic argument at the nonlinear level,  we see why, at least formally, the conservation of $\rho+\omega$ can be considered as a linear approximation of the conservation of the potential vorticity for perturbations around the Couette flow with constant density. 
		
		The exact conservation along the Couette flow of $\rho+\omega$ plays a central role in our analysis in the inviscid case.

	\subsubsection*{Strategy of proof} Let us now briefly discuss the strategy of proof for the Theorem \ref{th:NScouetteintro} when $\nu=\lambda=0$. First of all, we remove the transport terms by defining the change of coordinates dictated by the background shear. Then, on this reference frame we have the exact conservation of the quantity $\rho + \omega$, so that we are able to reduce the degrees of freedom for the system \eqref{eq:contcouetteint}-\eqref{eq:vorticitycouetteint} and write a $2\times 2 $ system only involving the density and the divergence in the moving frame. Taking its Fourier transform in all the space variables, it can be studied as a $2\times 2$ non-autonomous  dynamical system at any fixed frequency $k,\eta$. Performing a suitable symmetrization via time dependent Fourier multipliers,  we can infer an energy estimate useful to deduce some property of the associated solution operator. Once the dynamics at any fixed frequency is understood, Theorem \ref{th:NScouetteintro} follows as a consequence and can be proved by going back to the original variables. 
	
	We present a more precise statement of Theorem \ref{th:NScouetteintro} in Theorems \ref{maintheorem} and \ref{maintheoremlwz}, where we consider separately the upper and lower bounds respectively. 
	
	\subsection{{Viscous case}}
	Theorem \ref{th:NScouetteintro} for $\nu>0$  gives a rigorous mathematical justification for the observations made in \cite{farrell2000transient}. At least to our knowledge, it appears to be the first enhanced dissipation estimate in the compressible case. In the bound \eqref{bd:compenh} we see the possibility of a large transient growth of order $O(\nu^{-\frac16})$ on a time scale $O(\nu^{-\frac13})$. This growth is due to the instability mechanism found in the inviscid case. Instead, the bounds \eqref{bd:P1enh}-\eqref{b:P2enh} combines inviscid damping and enhanced dissipation for the solenoidal component of the velocity. 
	
	The numerical observations made in \cite{farrell2000transient,hanifi1996transient,malik2008linear} shows that the transient growth increases at increasing Mach number, see for example \cite[Fig. 9]{hanifi1996transient}. In Theorem \ref{th:NScouetteintro} we have not an explicit dependence since, as previously mentioned, we are neglecting constants which can grow exponentially fast with respect to the Mach number. It may be of interest to estimate the dependence on the Mach number in an optimal way. For instance, improving the constants up to $O(M^\beta)$ for some $\beta\geq 1$, would imply that the density may experience a transient growth of order $O(M^{\beta+1}\nu^{-\frac16})$.
	
	\begin{remark}[Restrictions on the Mach number]
		In Theorem \ref{th:NScouetteintro} we have to restrict our analysis to the case of Mach numbers satisfying 	$M\leq  \min \{(\nu+\lambda)^{-\frac12},\nu^{-\frac13}\}$. 	For $\nu,\lambda \ll 1$ the last assumption is not really restrictive since in most physical applications $M\leq 1$ and in the astrophysical context $M\sim 10-50$ \cite{bodo2005spiral,makita2000two}. However, as we explain in Remark \ref{rem:condM}, the condition can be easily relaxed to
		\begin{equation}
			\label{condMachdelta}
			M\leq \min \{(\nu+\lambda)^{-\frac12},\delta^{-1}\nu^{-\frac13}\}
		\end{equation}
		for any $0<\delta\leq 1$, at the price of deteriorating the decay rates by a factor $\delta^{-1}$, namely instead of $e^{-c\nu^\frac13 t}$ one has $e^{-\delta c\nu^\frac13 t}$. The hypothesis $M\leq (\nu+\lambda)^{-\frac12}$, instead, is more rigid. 
	\end{remark}
	\begin{remark}[Absence of shear viscosity]
		If we set $\nu=0$ and $\lambda>0$ the dissipation is present only in the equation for the divergence. This particular case is not immediately covered by Theorem \ref{th:NScouetteintro} since one may infer more properties. We briefly discuss more about this point in Remark \ref{rem:nu0}   
	\end{remark}
	\begin{remark}[Regularity of the initial data]
		In Theorem \ref{th:NScouetteintro}, since we want to combine the inviscid and viscous dynamics, we are not interested in providing sharp regularity assumptions on the initial data. It is indeed natural to trade regularity for time decay in the inviscid problem, as also observed in the incompressible case \cite{bedrossian2019stability}.
	\end{remark}
	In view of the previous remark, we stress that in the viscous case it is not necessary to lose regularity and we are able to infer the following.
	\begin{theorem}
		\label{th:NSnoloss} Let $\nu>0$, $\lambda\geq 0$ and $M>0$ be such that $\nu+\lambda \leq 1/2$ and $M\leq \min \{(\nu+\lambda)^{-\frac12},\nu^{-\frac13}\}$. Assume that $\rho^{in}\in H^{1}(\T\times \R)$, $\alpha^{in},\omega^{in}\in L^2(\T\times \R)$. Then 
		\begin{equation}
			\label{bd:arhoomen}
			\begin{split}
				&\norm{(\alpha-\alpha_0)(t)}_{L^2}+\frac{1}{M}\norm{\nabla(\rho-\rho_0)(t)}_{L^2}+\norm{(\omega-\omega_0)(t)}_{L^2}\\ &\lesssim \nu^{-\frac12}e^{-\frac{\nu^{\frac13}}{64}t}\left(\norm{\alpha^{in}-\alpha_0^{in}}_{L^2}+\frac{1}{M}\norm{\nabla (\rho^{in}-\rho^{in}_0)}_{L^2}+\norm{(\omega^{in}-\omega^{in}_0)}_{L^2}\right).
			\end{split}
		\end{equation}
		In addition,  the following inequality holds
		\begin{equation}
			\label{bd:vrhoen}
			\begin{split}
				&\norm{(\vv-\vv_0)(t)}_{L^2}+\frac{1}{M}\norm{(\rho-\rho_0)(t)}_{L^2}\\
				&\lesssim \nu^{-\frac16}e^{-\frac{\nu^{\frac13}}{64}t}\left(\norm{\alpha^{in}-\alpha_0^{in}}_{L^2}+\frac{1}{M}\norm{\nabla (\rho^{in}-\rho^{in}_0)}_{L^2}+\norm{(\omega^{in}-\omega^{in}_0)}_{L^2}\right)
			\end{split}
		\end{equation}
	\end{theorem}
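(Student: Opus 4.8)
The strategy follows the one used for the viscous part of Theorem~\ref{th:NScouetteintro}, with the bookkeeping arranged so that the transient polynomial growth is absorbed into powers of $\nu$ \emph{uniformly in the frequency}; this uniformity is precisely what removes the loss of derivatives. As in the analysis of the inviscid case, we pass to the coordinates moving with the shear, $X=x-ty$, take the Fourier transform in both space variables (which kills the transport terms), and restrict to nonzero horizontal frequencies $k\in\mathbb{Z}\setminus\{0\}$, since only the fluctuations $f-f_0$ enter the statement. At each fixed $(k,\eta)$ the system \eqref{eq:contcouetteint}--\eqref{eq:vorticitycouetteint} becomes a non-autonomous linear ODE for $(\hrho,\halpha,\homega)(t,k,\eta)$; the conservation \eqref{eq:consrom} is now broken --- it reads $\tfrac{\dd}{\dd t}\widehat{(\rho+\omega)}=-\nu q(t)\,\homega$ along a mode, with $q(t):=k^2+(\eta-kt)^2$ the symbol of $-\Delta$ in these variables --- so we keep the full $3\times3$ system. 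The Laplacian contributes the coercive factors $-(\nu+\lambda)q(t)$ and $-\nu q(t)$ in the equations for $\halpha$ and $\homega$ only, the continuity equation remaining undamped, while $2\dx v^y$, rewritten through \eqref{eq:v2}, is the mode-localized coupling responsible for the secular growth.

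Next, perform the symmetrization used in the inviscid analysis --- a change of unknowns together with time-dependent Fourier multipliers which turns the coupling $\halpha\leftrightarrow\frac{1}{\M^2}\Delta\hrho$ into a skew-symmetric wave-type block --- and form a coercive energy $E(t;k,\eta)$: built from the variables $(\halpha,\frac{1}{\M}|(k,\eta-kt)|\hrho,\homega)$ when aiming at \eqref{bd:arhoomen}, and from $(\widehat{\vv},\frac{1}{\M}\hrho)$ when aiming at \eqref{bd:vrhoen}, the two sets differing by the factor $|(k,\eta-kt)|$ which grows linearly in $t$ along the shear, so the former energy grows one power of $\langle t\rangle$ faster than the latter. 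The multipliers are tuned so that the indefinite contribution of the $2\dx v^y$ term is balanced against the derivatives of the multipliers and the dissipation; under the hypotheses $\nu+\lambda\le1/2$ and $\M\max\{(\nu+\lambda)^{\frac12},\nu^{\frac13}\}\le1$ the Mach-dependent corrections remain subordinate, and integration in time, combined with the elementary enhanced dissipation bound $\nu\int_0^t q(s)\,\dd s\gtrsim\nu^{\frac13}t-C$ --- valid for every $k\neq0$ and $\eta$ with $C$ absolute, obtained by excising the window $|s-\eta/k|\lesssim\nu^{-\frac13}$ on which $q$ is smallest --- yields, with constants \emph{independent of} $k,\eta,\nu,\lambda$,
\begin{align*}
	|\halpha(t)|+\tfrac{1}{\M}|(k,\eta-kt)|\,|\hrho(t)|+|\homega(t)| &\lesssim \langle t\rangle^{\frac32}\, e^{-\frac{1}{32}\nu^{\frac13}t}\,\G(k,\eta),\\
	|\widehat{\vv}(t)|+\tfrac{1}{\M}|\hrho(t)| &\lesssim \langle t\rangle^{\frac12}\, e^{-\frac{1}{32}\nu^{\frac13}t}\,\G(k,\eta),
\end{align*}
where $\G(k,\eta):=|\halpha^{in}|+\tfrac{1}{\M}|(k,\eta)|\,|\hrho^{in}|+|\homega^{in}|$ and, in the second line, $\widehat{P[\vv]}=\homega/|(k,\eta-kt)|$ is recovered from the vorticity, which is both inviscidly damped and enhanced-dissipating.

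Finally, we trade part of the decay for the polynomial: writing $e^{-\nu^{1/3}t/32}=e^{-\nu^{1/3}t/64}\,e^{-\nu^{1/3}t/64}$ and using $\sup_{t\ge0}\langle t\rangle^{3/2}e^{-\nu^{1/3}t/64}\lesssim\nu^{-1/2}$, resp.\ $\sup_{t\ge0}\langle t\rangle^{1/2}e^{-\nu^{1/3}t/64}\lesssim\nu^{-1/6}$, on one factor, the two mode-wise bounds become $\lesssim\nu^{-1/2}e^{-\nu^{1/3}t/64}\,\G(k,\eta)$ and $\lesssim\nu^{-1/6}e^{-\nu^{1/3}t/64}\,\G(k,\eta)$. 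Summing over $k\neq0$ and integrating in $\eta$ by Plancherel --- using that $\|\G\|_{L^2_{k,\eta}}$ is comparable to $\|\alpha^{in}-\alpha_0^{in}\|_{L^2}+\tfrac{1}{\M}\|\nabla(\rho^{in}-\rho^{in}_0)\|_{L^2}+\|\omega^{in}-\omega^{in}_0\|_{L^2}$ --- yields \eqref{bd:arhoomen} and \eqref{bd:vrhoen}; no derivatives are lost, since the mode-wise bounds carry no frequency weights beyond those already present in $\G$. The heart of the argument is the energy estimate above: the multipliers must be designed so that every error term is absorbed with $(k,\eta)$-independent constants, in particular so that in the window around the critical time $t=\eta/k$ --- where $q$ attains its minimum $k^2$ and the dissipation is weakest --- the growth of $E$ does not exceed the prescribed polynomial order. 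Intertwined with this is the need to transfer the dissipative decay onto the density, which carries no dissipation in \eqref{eq:contcouetteint} and can only borrow it through the coupling with $\alpha$ in \eqref{eq:divcouetteint} --- the feature, stressed in the introduction, that makes an exponential decay for $\rho$ non-trivial.
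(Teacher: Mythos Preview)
Your high-level plan---obtain frequency-uniform mode-wise bounds with a polynomial-in-$t$ prefactor, then trade $\langle t\rangle^{3/2}$ and $\langle t\rangle^{1/2}$ against half of the exponential to produce $\nu^{-1/2}$ and $\nu^{-1/6}$---is sound, and it does give a route to Theorem~\ref{th:NSnoloss} that differs from the paper's. The paper instead introduces a bounded multiplier $w$ (see \eqref{def:w}--\eqref{def:explw}) with $1\le w\lesssim\nu^{-2/3}$ and $wp^{-1}\le k^{-2}$, runs the energy estimate directly on $(w^{-3/4}p^{1/2}\hR/M,\,w^{-3/4}\hA,\,w^{-3/4}(\hXi-\nu M^2\hA))$ with \emph{no} polynomial growth, and reads off $\nu^{-1/2}$ from $w^{3/4}$ and $\nu^{-1/6}$ from $w^{1/4}(w/p)^{1/2}$. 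Your approach is morally the observation that the ``loss of derivatives'' in the $p^{-3/4}$-weighted estimate of Proposition~\ref{th:enhancedcomp} (the factor $(p(0)/k^2)^{3/2}$ in \eqref{bd:E1}) is exactly compensated when one passes back to the unweighted quantities, leaving only a $\langle t\rangle$-power; this is correct and gives a cleaner conceptual separation between the inviscid growth and the viscous cutoff, at the cost of a slightly less sharp exponent in the exponential.

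There is, however, a genuine gap. You propose to build the energy from $(\halpha,\tfrac{1}{M}|(k,\eta-kt)|\hrho,\homega)$, i.e.\ using the vorticity directly. This does not close: in the moving frame $\dt\hOmega=\hA-\nu p\,\hOmega$, so any reasonable energy picks up a term $\Re(Z_2\bar Z_3)$ with an $O(1)$ coefficient (here $Z_2\sim p^{-3/4}\hA$, $Z_3\sim p^{-3/4}\hOmega$), which cannot be absorbed by the available dissipation of size $\nu^{1/3}$ near the critical time and would force exponential, not polynomial, growth in the Gr\"onwall. The paper's cure---which you do not mention---is to replace $\Omega$ by the ``good unknown'' $\Xi-\nu M^2 A$ with $\Xi=R+\Omega$; see \eqref{eq:dtXinuAphys1}. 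In that equation the dangerous $O(1)$ source becomes $\nu(\mu-\nu)M^2 p\,\hA$, which scales like the dissipation of $\hA$ (under $M^2\mu\le1$), plus terms carrying the integrable multiplier $k^2/p$. Once you run your argument with $\Xi-\nu M^2 A$ in place of $\Omega$ (and the inviscid symmetrization $p^{-1/4},p^{-3/4}$ together with the multiplier $m$ of \eqref{def:m}), the frequency-uniform bounds $\langle t\rangle^{3/2}e^{-c\nu^{1/3}t}$ and $\langle t\rangle^{1/2}e^{-c\nu^{1/3}t}$ that you claim do follow, and your final step goes through. Without that substitution the estimate fails at the first step.
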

	In the bound \eqref{bd:arhoomen}, at the price of having worst estimates with respect to the one in Theorem \ref{th:NScouetteintro}, we see that we do not lose derivatives to get the exponential decay for the quantities on the left-hand side. Then, the bound \eqref{bd:vrhoen} does not straightforwardly follow by \eqref{bd:arhoomen}. It is indeed a consequence of a careful choice of some Fourier multipliers used to prove \eqref{bd:arhoomen}. In addition, since $\langle t \rangle^{\frac12}\lesssim \nu^{-\frac16}\exp((\nu^{\frac13}t)/64)$, the bound \eqref{bd:vrhoen} agrees with \eqref{bd:compenh} in terms of order of magnitude of the maximal possible growth. The estimates and the method of proof of Theorem \ref{th:NSnoloss} can be useful to extend this linear result to prove a transition threshold in Sobolev spaces for the fully nonlinear case, which we aim at studying.
  \subsubsection*{Strategy of proof} 	We now comment about the strategy of proof of Theorem \ref{th:NScouetteintro} and Theorem \ref{th:NSnoloss}. Being similar, we outline here the main ideas in both cases. 
	
	When  viscosity is present, we have to overcome two main difficulties. First of all, as can be seen by summing up \eqref{eq:contcouetteint} and \eqref{eq:vorticitycouetteint}, the conservation of $\rho+\omega$ along the Couette flow no longer holds, which is a crucial point in the inviscid case. Therefore, we cannot reduce the analysis to the study of a $2\times 2 $ system. The second point is that, since we do not have a dissipative term in \eqref{eq:contcouetteint}, it is not a priori trivial to have decay for the density. However, we will be able to recover the exponential decay via a weighted energy estimate where it is crucial to exploit the coupling between $\rho$ and $\alpha$,
	
	As done in the inviscid case, we first remove the transport terms via the standard change of coordinates and we perform the Fourier transform in both space variables, leading us to the study of a $3\times3$ system in the Fourier space. It is then crucial to replace the vorticity with another auxiliary quantity, i.e. $\rho+\omega-\nu M^2\alpha$, which satisfy a more complicated equation with respect to $\omega$ but has a better structure to make use of this variable in energy estimates. Then, we are able to define a weighted energy functional in terms of $(\rho,\alpha, \rho+\omega-\nu M^2\alpha)$ for which we can infer a Gr\"onwall's type estimate. The weights are suitable time-dependent Fourier multipliers. The main difference between the proof of Theorem \ref{th:NScouetteintro} and Theorem \ref{th:NSnoloss} is the choice of the weights.
	\medskip
	
	\subsubsection*{Outline of the paper}
	We begin our analysis with the study of the dynamics of the $x$-averages in Section \ref{sec:zeromode}. In Section \ref{sec:invcompcouette} we consider the inviscid problem in order to prove Theorem \ref{th:NScouetteintro} when $\nu=\lambda=0$. In Section \ref{sec:visccompcouette} we turn our attention to the viscous case. Here, we first prove Theorem \ref{th:NScouetteintro} in Subsection \ref{subsec:thNS1}. Then, in Subsection \ref{subsec:thNS2} we present the proof of Theorem \ref{th:NSnoloss}.
	\subsubsection*{Notations}
	In this paper, when using the symbol $\lesssim$ we are neglecting constants which do not depend on $\nu$ but may depend on $(1+M)^\beta$ or $\exp(M^\beta)$ for some $\beta\geq 1$. However, we keep track of constants which goes to zero as $M\to 0$.
	
	When it will be clear from the context whether we are working in the physical space or in the frequency space, by an abuse of notation, we will not distinguish between pseudo-differential operators and their own symbols.
	
	We denote the Fourier transform as 
	\begin{align*}
		&\widehat{f}(k,\eta)=\frac{1}{2\pi}\iint_{\T\times\mathbb{R}}e^{-i(kx+\eta y)}f(x,y)dxdy, \\
		&f(x,y)=\frac{1}{2\pi}\sum_{k \in \mathbb{Z}}\int_\mathbb{R}e^{i(kx+\eta y)}\widehat{f}(k,\eta)d\eta,
	\end{align*}
	We say that $f\in H^{s_1}_xH^{s_2}_y$ whenever
	\begin{equation*}
		\norm{f}_{H^{s_1}_xH^{s_2}_y}^2=\sum_k\int \langle k\rangle^{2s_1}\langle \eta \rangle^{2s_2} |\hat{f}|^2(k,\eta)d\eta< +\infty,
	\end{equation*}
	whereas we denote the norm in the usual $H^s(\T\times \R)$ space as
	\begin{equation*}
		\norm{f}_{H^{s}}^2=\sum_k\int \langle k,\eta\rangle^{2s} |\hat{f}|^2(k,\eta)d\eta.
	\end{equation*}

	Let $Z(t)=(Z_1(t),Z_2(t))^T:[t_0,+\infty)\times\mathbb{C}^{2}\to \mathbb{C}^2$ and $\mathcal{L}(t): [t_0,+\infty)\times\mathbb{C}^{2\times 2}\to \mathbb{C}^{2\times 2}$. Given the following 2D non-autonomous dynamical system
	\begin{equation}
		\label{eqGamma}
		\Dt Z=\mathcal{L}(t)Z,
	\end{equation}
	we define the standard Picard iteration 
	\begin{equation}
		\label{def:mhiGamma}\begin{split}
			&\Phi_\mathcal{L}(t,t_0)=\mathbbm{1}+\sum_{n=1}^{\infty}\mathcal{I}_n(t,t_0),\\
			&\mathcal{I}_{n+1}(t,t_0)=\int_{t_0}^{t}\mathcal{L}(\tau)\mathcal{I}_n(\tau,t_0)d\tau,\qquad
			\mathcal{I}_1(t,t_0)=\int_{t_0}^t\mathcal{L}(\tau)d\tau.
		\end{split}
	\end{equation}
	$\Phi_\mathcal{L}$ is the solution operator associated to $\mathcal{L}$. In particular it satisfies the group property, $\Phi_\mathcal{L}(t,t_0)=\Phi_\mathcal{L}(t,s)\Phi_\mathcal{L}(s,t_0)$ for any $t,s\geq0$.
	
	In order to remove the transport term from the equations, we will always make the following change of coordinates
	\begin{equation}
		\label{def:movframe}
		X=x-yt,\quad 	     Y=y.
	\end{equation}
	In particular, the differential operators change as follows
	\begin{equation}
		\label{def:DeltaL}
		\begin{split}
			&\dx=\dX,\\ &\dy = \dY-t\dX,\\     
			&\Delta= \Delta_L:=\dXX+(\dY-t\dX)^2.
		\end{split}
	\end{equation}
	In the new reference frame, which we shall often refer to as the \textit{moving frame}, we also define the functions 
	\begin{equation}
		\label{def:RAO}
		\begin{split}
			R(t,X,Y)&=\rho(t,X+tY,Y),\\
			A(t,X,Y)&=\alpha(t,X+tY,Y),\\
			\Omega(t,X,Y)&=\omega(t,X+tY,Y).
		\end{split}
	\end{equation}
	We denote the symbol associated to $-\Delta_L$ as
	\begin{align}
		\label{def:sigma}
		\p(t,k,\eta)&=k^2+(\eta-kt)^2.
	\end{align}  
	Moreover
	\begin{equation}
		\label{def:p'}
		(\dt p)(t,k,\eta)=-2k(\eta-kt)
	\end{equation}
	is the symbol associated to the operator  $2\dX(\dY-t\dX)$.
	\section{Dynamics of the $k=0$ modes} \label{sec:zeromode}
	
	In this section, we investigate in detail the dynamics of the $x$-averages of the perturbations. Due to the structure of the shear flow and the fact that the equations are
	linear, it is clear that the zero mode in $x$ has an independent dynamics with respect to other modes. Consequently, in our analysis we can decouple the evolution of the $k = 0$ mode from the rest of the perturbation.
	
	The system \eqref{eq:contcouetteint}-\eqref{eq:divcouetteint} when projected onto the $k=0$ frequency, recalling \eqref{def:f0}, read as follows
	\begin{align}
		\label{eq:rho0} &\dt \rho_0=-\alpha_0,\\
		\label{eq:alpha0}&\dt \alpha_0=(\nu+\lambda)\dyy \alpha_0-\frac{1}{M^2}\dyy \rho_0,\\
		\label{eq:om0}&\dt \omega_0=\alpha_0+\nu \dyy \omega_{0}.
	\end{align}
	Since $v_0^y$ satisfy the same equation of $\alpha_0=\dy v_0^y$,  we will identify $\dy^{-1}\alpha_0=v_0^y$.
	
	When $\nu=\lambda=0$ the dynamics of $(\rho_0,\alpha_0)$ is given by a standard $1$-D wave equation, namely 
	\begin{equation}
		\label{eq:waverho0}
		\dtt \rho_0-M^{-2}\dyy\rho_0=0, \qquad \text{in }\R,
	\end{equation}
	and by adding \eqref{eq:rho0} to \eqref{eq:om0} we get
	\begin{equation*}
		\dt(\rho_0+\omega_0)=0,
	\end{equation*} 
	hence $\omega_0=\rho^{in}_0+\omega^{in}_0-\rho_0$. Therefore, when $\nu=\lambda=0$ the dynamics of the $k=0$ mode is completely determined by solving \eqref{eq:waverho0}. By the explicit representation formula for \eqref{eq:waverho0}, we know that there is not decay for the zero modes. 
	
	When $\nu>0$, one has an explicit representation of the solution in the Fourier space. However, from this formula it is not immediate to infer decay properties of solutions to \eqref{eq:rho0}-\eqref{eq:om0}. We are then going to derive decay properties of the $k=0$ by using an energy method in a similar way to what was done by Guo and Wang in \cite{guo2012decay}. In order to perform the energy estimate, it will be convenient to replace the equation \eqref{eq:om0} with 
	\begin{equation}
		\label{eq:Xi0}
		\dt(\rho_0+\omega_0-\nu M^2\alpha_0)=\nu\dyy(\rho_0+\omega_0-\nu M^2\alpha_0)-\lambda\nu M^2\dyy \alpha_0.
	\end{equation} 
	In particular, we have the following.
	\begin{theorem}
		\label{th:zeromode}
		Let $\nu,\lambda \geq0$ and $\rho^{in},\alpha^{in},\omega^{in}$ be the initial data of \eqref{eq:contcouetteint}-\eqref{eq:vorticitycouetteint}. Then, the solution $(\rho,\alpha,\omega)$ can be decomposed as $\rho=\rho_0+\rho_{\neq}, \ \alpha=\alpha_0+\alpha_{\neq}, \ \omega=\omega_0+\omega_{\neq}$ where $(\rho_{\neq},\alpha_{\neq},\omega_{\neq})$ satisfy \eqref{eq:contcouetteint}-\eqref{eq:vorticitycouetteint} and $(\rho_{0},\alpha_0,\omega_0)$ satisfy \eqref{eq:rho0}-\eqref{eq:om0}. For the $k=0$ mode we have the following: for any $\ell \geq 0$, let 
		\begin{equation}
			\label{def:El0}
			\begin{split}
				\mathcal{E}^\ell(t)=&\norm{\dy^\ell \alpha_0(t)}^2_{L^2}+\norm{\dy^{\ell-1} \alpha_0(t)}^2_{L^2}+\norm{\dy^\ell(\omega_0+\rho_0-\nu M^2\alpha_0)}_{L^2}^2\\
				&+\frac{1}{M^2}(\norm{\dy^{\ell+1}\rho_0(t)}_{L^2}^2+\norm{\dy^{\ell}\rho_0(t)}_{L^2}^2),	
			\end{split}
		\end{equation}
		where $\dy^{-1}\alpha_0=v_0^y$. If $M(\nu+\lambda)^{\frac12}\leq 1$ and $\mathcal{E}^\ell_{in},\mathcal{E}^0_{in}<+\infty$ then 
		\begin{align}
			\label{bd:Eldecay} \mathcal{E}^\ell(t)\leq&\frac{4\mathcal{E}^\ell_{in}}{(\nu C_{in}^\ell t+1)^{\ell}}, 
		\end{align}
		where $C_{in}^0=0$ and $C_{in}^\ell=C\max\{1,(\mathcal{E}^\ell_{in}/\mathcal{E}^0_{in})^{\frac{1}{\ell}}\}$ for $\ell\geq 1$ and some constant $C$ which does not depend on $\ell,\nu,\lambda$.
		In addition we have that 
		\begin{equation}
			\label{eq:zerozero}
			\rho_0^{in}=\alpha_0^{in}=\omega_0^{in}=0 \Longrightarrow \rho_0(t)=\alpha_0(t)=\omega_0(t)=0.
		\end{equation}
	\end{theorem}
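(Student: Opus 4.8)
\emph{Proof plan.} The plan is to rephrase the $k=0$ dynamics so that the damped--wave structure hidden in \eqref{eq:rho0}--\eqref{eq:alpha0} becomes transparent, and then to combine a hypocoercivity-type energy estimate with a Nash/Gagliardo--Nirenberg interpolation, in the spirit of Guo and Wang \cite{guo2012decay}. Since the $k=0$ mode decouples from the fluctuations, it is enough to work with \eqref{eq:rho0}--\eqref{eq:om0}. First I would verify, using \eqref{eq:rho0}, \eqref{eq:alpha0} and \eqref{eq:om0}, that $\Xi_0:=\omega_0+\rho_0-\nu M^2\alpha_0$ solves \eqref{eq:Xi0}; the point of this substitution is that $\Xi_0$ diffuses with the \emph{shear} viscosity coefficient $\nu$ and is forced only through $\dyy\alpha_0$. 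Passing to the Fourier transform in $y$, at each frequency $\eta$ one is left with a constant--coefficient linear system for $(\hrho_0,\halpha_0,\hXi_0)$ in which $(\hrho_0,\halpha_0)$ is a closed damped harmonic oscillator (equivalently $\dtt\hrho_0+(\nu+\lambda)\eta^2\dt\hrho_0+\tfrac{\eta^2}{M^2}\hrho_0=0$, cf. \eqref{eq:waverho0}), while $\hXi_0$ is slaved to $\halpha_0$. The implication \eqref{eq:zerozero} is then immediate: \eqref{eq:rho0}--\eqref{eq:om0} is linear and homogeneous, so vanishing data yield the vanishing solution.

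The core of the argument is a frequency--by--frequency decay estimate for a modified energy $\mathcal F_\eta$ equivalent to $E_\eta:=\tfrac{\langle\eta\rangle^2}{\eta^2}|\halpha_0|^2+\tfrac{\langle\eta\rangle^2}{M^2}|\hrho_0|^2+|\hXi_0|^2$, chosen so that $\mathcal E^\ell\simeq\int_{\R}\eta^{2\ell}E_\eta\,d\eta$ (the weight $\langle\eta\rangle^2/\eta^2$ simply encodes that \eqref{def:El0} retains both $v_0^y=\dy^{-1}\alpha_0$ and $\dy^\ell\alpha_0$). The natural oscillator energy $q_\eta=|\halpha_0|^2+\tfrac{\eta^2}{M^2}|\hrho_0|^2$ obeys $\Dt q_\eta=-2(\nu+\lambda)\eta^2|\halpha_0|^2$, which damps only $\halpha_0$; to also damp $\hrho_0$ one adds a small, frequency--dependent cross term $-\varepsilon\,\psi(\eta)\,\Re(\hrho_0\overline{\halpha_0})$ (suitably weighted), with $\varepsilon$ an absolute constant and $\psi$ tuned to the oscillator so that the dissipation rate of the $\hrho_0$--component is $\psi(\eta)\tfrac{|\eta|}{M}\simeq\min\{(\nu+\lambda)\eta^2,\,((\nu+\lambda)M^2)^{-1}\}$, its two branches matching the underdamped and overdamped regimes. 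The cross--term errors and the forcing of $\hXi_0$ by $\halpha_0$ are absorbed into the $\halpha_0$--dissipation using $\lambda^2\nu M^4\le\lambda\le\nu+\lambda$ and $(\nu+\lambda)M^2\le1$; this is exactly where the hypotheses $\nu+\lambda\le\tfrac12$ and $M(\nu+\lambda)^{1/2}\le1$ enter. The outcome is, with $c_0:=((\nu+\lambda)M^2)^{-1}\ge1$,
\[
\Dt\mathcal F_\eta\le-c\,\min\{\nu\eta^2,\,c_0\}\,\mathcal F_\eta,\qquad\text{hence}\qquad \Dt\big(\eta^{2\ell}\mathcal F_\eta\big)\le-c\,\min\{\nu\eta^2,\,c_0\}\,\eta^{2\ell}\mathcal F_\eta .
\]

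Integrating in $\eta$ and writing $\mathcal E^\ell\simeq\int\eta^{2\ell}\mathcal F_\eta\,d\eta$: for $\ell=0$ the right--hand side is nonpositive, whence $\mathcal E^0(t)\le4\mathcal E^0_{in}$, consistent with $C^0_{in}=0$. For $\ell\ge1$ I split the frequency integral at $K=(c_0/\nu)^{1/2}$: on $\{|\eta|>K\}$ one has $\min\{\nu\eta^2,c_0\}=c_0$, so $\mathcal E^\ell_{hi}$ decays at the exponential (hence super--polynomial) rate $c\,c_0\ge c$; on $\{|\eta|\le K\}$ one has $\min\{\nu\eta^2,c_0\}=\nu\eta^2$, so
\[
\Dt\mathcal E^\ell\le-c\,\nu\int_{|\eta|\le K}\eta^{2\ell+2}\mathcal F_\eta\,d\eta-c\,c_0\,\mathcal E^\ell_{hi}.
\]
Combining this with the elementary Hölder interpolation $\int_{|\eta|\le K}\eta^{2\ell}\mathcal F_\eta\le\big(\int_{|\eta|\le K}\eta^{2\ell+2}\mathcal F_\eta\big)^{\ell/(\ell+1)}(\mathcal E^0)^{1/(\ell+1)}$ and with $\mathcal E^0(t)\le 4\mathcal E^0_{in}$, one is led to a closed differential inequality of the form $\Dt\mathcal E^\ell\le-c'\min\{\nu(\mathcal E^\ell)^{1+1/\ell}(\mathcal E^0_{in})^{-1/\ell},\ \mathcal E^\ell\}$. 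Integrating $\dot y\le-a\,y^{1+1/\ell}$ via $z=y^{-1/\ell}$ yields $\mathcal E^\ell(t)\le\mathcal E^\ell_{in}\big(1+\tfrac{a}{\ell}(\mathcal E^\ell_{in})^{1/\ell}t\big)^{-\ell}$ with $a\simeq\nu(\mathcal E^0_{in})^{-1/\ell}$, which is \eqref{bd:Eldecay} (with $C^\ell_{in}\simeq\ell^{-1}(\mathcal E^\ell_{in}/\mathcal E^0_{in})^{1/\ell}$, the $\max\{1,\cdot\}$ and the constant $4$ absorbing the short--time regime, where $\mathcal E^\ell(t)\le4\mathcal E^\ell_{in}$ is clear).

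I expect the main obstacle to be the pointwise estimate of the second step: choosing the multiplier $\psi(\eta)$ (and the threshold $|\eta|\sim((\nu+\lambda)M)^{-1}$ separating the two oscillator regimes) so that the hypocoercive dissipation is \emph{quantitatively} bounded below by $\min\{\nu\eta^2,c_0\}\,\mathcal F_\eta$ uniformly in $\eta$ and in the parameters, while simultaneously keeping the $\langle\eta\rangle^2/\eta^2$ weight under control and absorbing the $\hXi_0$--forcing --- all using only $\nu+\lambda\le\tfrac12$ and $M^2(\nu+\lambda)\le1$. Once this is in place, the interpolation/ODE step is routine.
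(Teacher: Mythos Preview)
Your plan is sound and would work: it is a Guo--Wang type argument, which is exactly what the paper does. The difference lies in how the hypocoercive cross term is chosen, and here the paper takes a noticeably simpler route than the one you outline.

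You propose a frequency--dependent multiplier $\psi(\eta)$ tuned to match the underdamped/overdamped regimes of the $(\hrho_0,\halpha_0)$ oscillator, with the goal of obtaining the pointwise decay rate $\min\{\nu\eta^2,c_0\}$ for $\mathcal F_\eta$; you then split at $K=(c_0/\nu)^{1/2}$ and interpolate on the low--frequency part. The paper instead works directly in physical space with the \emph{constant--coefficient} cross term
\[
E^\ell(t)=\tfrac12\Big(\mathcal E^\ell(t)-\tfrac{\nu+\lambda}{2}\jap{\dy^\ell\rho_0,\dy^\ell\alpha_0}\Big),
\]
which is coercive under $M^2(\nu+\lambda)\le1$. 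A direct computation gives
\[
\Dt E^\ell+\tfrac{\nu+\lambda}{8}\Big(\norm{\dy^{\ell+1}\alpha_0}^2+\norm{\dy^\ell\alpha_0}^2+\tfrac{1}{M^2}\norm{\dy^{\ell+1}\rho_0}^2\Big)+\tfrac{\nu}{2}\norm{\dy^{\ell+1}\Xi_0}^2\le0,
\]
i.e.\ \emph{all} dissipation already appears at order $\ell+1$, with no need to distinguish oscillator regimes. From here the same interpolation $\norm{\dy^\ell f}\le\norm{\dy^{\ell+1}f}^{\ell/(\ell+1)}\norm{f}^{1/(\ell+1)}$ (applied componentwise, using the $\ell=0$ monotonicity $\mathcal E^0(t)\lesssim\mathcal E^0_{in}$) yields directly $\Dt E^\ell\le -c\,\nu\,(E^\ell)^{1+1/\ell}(\max\{\mathcal E^\ell_{in},\mathcal E^0_{in}\})^{-1/\ell}$, and integration gives \eqref{bd:Eldecay}. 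In other words, the obstacle you flag --- constructing $\psi(\eta)$ so that the pointwise lower bound holds uniformly across regimes --- simply does not arise.

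What each approach buys: your pointwise bound on $\mathcal F_\eta$ contains sharper frequency--localized information (e.g.\ exponential decay of high modes), whereas the paper's argument is shorter and avoids the case analysis entirely; for the integrated estimate \eqref{bd:Eldecay} the two are equivalent. One minor point: you invoke $\nu+\lambda\le\tfrac12$, but that hypothesis is not part of Theorem~\ref{th:zeromode}; only $M(\nu+\lambda)^{1/2}\le1$ is assumed, and the paper's estimates use only this.
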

	\begin{remark}\label{rem:nozero}
		In view of the theorem above, it is equivalent to study the dynamics of $(\rho-\rho_0,\alpha-\alpha_0,\omega-\omega_0)$ or $(\rho,\alpha,\omega)$ assuming that $\rho_{0}^{in}=\alpha_{0}^{in}=\omega_{0}^{in}=0$. In the rest of the paper, for simplicity of notation, we will always consider the second case.
	\end{remark}
	\begin{remark}
		For any $N\geq 0$, from the previous theorem we infer that 
	\begin{align}
		\norm{\alpha_0(t)}_{H^N}+\frac{1}{M}\norm{\dy \rho_0(t)}_{H^N}\lesssim& \frac{\sqrt{\mathcal{E}^N_{in}}}{(1+\nu t)^\frac12},\\
		\norm{\omega_{0}(t)}_{H^N}\lesssim &\sqrt{\mathcal{E}^N_{in}}.
	\end{align}
	Hence, $\alpha_0$ and $\dy \rho_0$ have the same decay as if $\rho_0, v_0^y$ had satisfied the standard $1$-$D$ heat equation, see for instance \cite[Theorem 1.1]{guo2012decay}.
	\end{remark}
	
	We now present the proof of Theorem \ref{th:zeromode}.
	\begin{proof}
		First of all, $(\rho_{\neq},\alpha_{\neq},\omega_{\neq})$ satisfy \eqref{eq:contcouetteint}-\eqref{eq:vorticitycouetteint} since $\dx (f_0)=0$. The proof of \eqref{eq:zerozero} follows by the linearity of the system \eqref{eq:rho0}-\eqref{eq:om0}.  To prove \eqref{bd:Eldecay}, we define 
		\begin{align*}
			E^\ell(t)=\frac12 \left(\mathcal{E}^\ell(t)-\frac{(\nu+\lambda)}{2} \jap{\dy^\ell \rho_0(t),\dy^\ell \alpha_0(t)}\right).
		\end{align*}
		Since $M(\nu+\lambda)\leq1$ we have 
		\begin{equation}
			\label{bd:coerc0}
			\frac14\mathcal{E}^\ell(t)\leq E^\ell(t)\leq \mathcal{E}^\ell(t),
		\end{equation}
		namely the functional $E^\ell$ is coercive. Then, by a direct computation we get 
		\begin{align*}
			\Dt E^\ell(t)&+(\nu+\lambda)\left(\norm{\dy^{\ell+1}\alpha_0}^2_{L^2}+\norm{\dy^{\ell}\alpha_0}^2_{L^2}+\frac{1}{4M^2}\norm{\dy^{\ell+1}\rho_0}^2_{L^2}\right)\\
			&+\nu \norm{\dy^{\ell+1}(\rho_0+\omega_0-\nu M^2\alpha_0)}^2_{L^2}\\
			=&\frac{\nu+\lambda}{4}\norm{\dy^\ell \alpha_0}^2_{L^2}+\frac{(\nu+\lambda)^2}{4}\jap{\dy^{\ell+1}\alpha_0,\dy^{\ell+1}\rho_0}\\
			&+\lambda \nu M^2\jap{\dy^{\ell+1}\alpha_0,\dy^{\ell+1}(\rho_0+\omega_0-\nu M^2\alpha_0)}.
		\end{align*}
		Using again that $M^2(\nu+\lambda)\leq 1$, we have 
		\begin{align*}
			\frac{(\nu+\lambda)^2}{4}|\jap{\dy^{\ell+1}\alpha_0,\dy^{\ell+1}\rho_0}|\leq& \frac{(\nu+\lambda)^2}{8} \norm{\dy^{\ell+1}\alpha_0}^2_{L^2}+M^2(\nu+\lambda)\frac{(\nu+\lambda)}{8M^2}\norm{\dy^{\ell+1}\rho_0 }^2_{L^2}\\
			\leq&\frac{(\nu+\lambda)}{8}  \norm{\dy^{\ell+1}\alpha_0}^2_{L^2}+\frac{(\nu+\lambda)}{8M^2}\norm{\dy^{\ell+1}\rho_0 }^2_{L^2},
		\end{align*}
		and 
		\begin{align*}
			\lambda \nu M^2|\jap{\dy^{\ell+1}\alpha_0,\dy^{\ell+1}(\rho_0+\omega_0-\nu M^2\alpha_0)}|\leq& \frac{\nu}{2}\norm{\dy^{\ell+1}\alpha_0}^2_{L^2}\\
			&+\frac{\nu}{2}\norm{\dy^{\ell+1}(\rho_0+\omega_0-\nu M^2\alpha_0)}^2_{L^2}.
		\end{align*}
		Consequently we infer 
		\begin{equation}
			\label{bd:El1}
			\begin{split}
				\Dt E^\ell(t)&+\frac{(\nu+\lambda)}{8}\left(\norm{\dy^{\ell+1}\alpha_0}^2_{L^2}+\norm{\dy^{\ell}\alpha_0}^2_{L^2}+\frac{1}{M^2}\norm{\dy^{\ell+1}\rho_0}^2_{L^2}\right)\\
				&+\frac{\nu}{2}\norm{\dy^{\ell+1}(\rho_0+\omega_0-\nu M^2\alpha_0)}^2_{L^2}\leq 0.
			\end{split}
		\end{equation}
		Therefore, by combining \eqref{bd:coerc0} with \eqref{bd:El1} we prove that for any $\ell\geq 0$
		\begin{equation}
			\label{bd:Elmon}\mathcal{E}^\ell(t)\lesssim \mathcal{E}^\ell_{in}.
		\end{equation}
		
		To prove \eqref{bd:Eldecay}, we need to reconstruct  some power of the energy functional by providing lower bounds for the positive terms appearing in \eqref{bd:El1}. Hence, we first recall the following interpolation inequality, see \cite[Lemma A.4]{guo2012decay}, 
		\begin{equation}
			\label{bd:trivinter}
			\norm{\dy^\ell f}_{L^2}\leq \norm{\dy^{\ell+1}f}_{L^2}^{\frac{\ell}{\ell+1}}\norm{f}_{L^2}^{\frac{1}{\ell+1}}.
		\end{equation}
	In addition, by \eqref{bd:Elmon} we know that 
		\begin{equation}
			\label{bd:rho0al0l2}
			\norm{\alpha_0}_{L^2}^2+\norm{\dy^{-1}\alpha_0}_{L^2}^2+\frac{1}{M^2}\norm{\rho_0}_{L^2}^2+\norm{\rho_0+\omega_0-\nu M^2\alpha_0}_{L^2}^2\lesssim \mathcal{E}^0_{in}.
		\end{equation}
		Therefore, for $\ell \geq 1$ from \eqref{bd:trivinter} and \eqref{bd:rho0al0l2} we get 
		\begin{align}
			\norm{\dy^{\ell+1}\alpha_0}_{L^2}+\frac{1}{M^2}\norm{\dy^{\ell+1}\rho_0}_{L^2}&\gtrsim  \norm{\dy^\ell \alpha_0}_{L^2}^{1+\frac{1}{\ell}}\norm{\alpha_0}^{-\frac{1}{\ell}}_{L^2}+\frac{1}{M^2}\norm{\dy^\ell \rho_0}_{L^2}^{1+\frac{1}{\ell}}\norm{\rho_0}^{-\frac{1}{\ell}}_{L^2}\\
			\gtrsim & \left(\norm{\dy^\ell \alpha_0}_{L^2}^{1+\frac{1}{\ell}}+(\frac{1}{M^2}\norm{\dy^\ell \rho_{0}}_{L^2})^{1+\frac{1}{\ell}}\right)(\mathcal{E}^0_{in})^{-\frac{1}{2\ell}}.
			\label{bd:a0r0ell}
		\end{align}
		Similarly we have 
		\begin{align}
			\label{bd:a0-1}
			&\norm{\dy^{\ell}\alpha_0}_{L^2}=\norm{\dy^{\ell+1}(\dy^{-1}\alpha_0)}_{L^2}\gtrsim\norm{\dy^{\ell-1} \alpha_0}_{L^2}^{1+\frac{1}{\ell}}(\mathcal{E}^0_{in})^{-\frac{1}{2\ell}},\\
			\label{bd:Xi0}
			&\norm{\dy^{\ell+1}(\rho_0+\omega_0-\nu M^2\alpha_0)}_{L^2}\gtrsim\norm{\dy^{\ell}(\rho_0+\omega_0-\nu M^2\alpha_0)}_{L^2}^{1+\frac{1}{\ell}}(\mathcal{E}^0_{in})^{-\frac{1}{2\ell}}. 
		\end{align}
		In account of \eqref{bd:Elmon}, we observe also that 
		\begin{equation}
			\label{bd:rhoell+1}
			\begin{split}
				\frac{1}{M^2}\norm{\dy^{\ell+1}\rho_0}_{L^2}=&\frac{1}{M^2}\norm{\dy^{\ell+1}\rho_0}_{L^2}^{1+\frac{1}{\ell}}\norm{\dy^{\ell+1}\rho_0}_{L^2}^{-\frac{1}{\ell}}\\
				\gtrsim& (\frac{1}{M^2}\norm{\dy^{\ell+1}\rho_0}_{L^2})^{1+\frac{1}{\ell}}(\mathcal{E}^\ell_{in})^{-\frac{1}{2\ell}}.
			\end{split}
		\end{equation} 
		In particular, by combining the estimates \eqref{bd:a0r0ell}-\eqref{bd:rhoell+1} we infer 
		\begin{align*}
			\norm{\dy^{\ell+1}\alpha_0}^2_{L^2}&+\norm{\dy^{\ell}\alpha_0}^2_{L^2}+\frac{1}{M^2}\norm{\dy^{\ell+1}\rho_0}^2_{L^2}+\norm{\dy^{\ell+1}(\rho_0+\omega_0-\nu M^2\alpha_0)}^2_{L^2}\\
			\gtrsim& \left(\norm{\dy^{\ell}\alpha_0}^2_{L^2}+\norm{\dy^{\ell-1}\alpha_0}^2_{L^2}+\frac{1}{2M^2}\norm{\dy^{\ell}\rho_0}^2_{L^2}\right)^{1+\frac{1}{\ell}}(\mathcal{E}^0_{in})^{-\frac{1}{\ell}}\\
			&+\norm{\dy^{\ell}(\rho_0+\omega_0-\nu M^2\alpha_0)}_{L^2}^{1+\frac{1}{\ell}}(\mathcal{E}^0_{in})^{-\frac{1}{2\ell}}+(\frac{1}{2M^2}\norm{\dy^{\ell+1}\rho_0}_{L^2}^2)^{1+\frac{1}{\ell}}(\mathcal{E}^\ell_{in})^{-\frac{1}{\ell}}\\
			\gtrsim& (\mathcal{E}^\ell(t))^{1+\frac{1}{\ell}}(\max\{\mathcal{E}^\ell_{in},\mathcal{E}^0_{in}\})^{-\frac{1}{\ell}}.
		\end{align*}
		Consequently, appealing to \eqref{bd:coerc0}, combining the bound above with \eqref{bd:El1} we have 
		\begin{equation}
			\Dt E^\ell(t)+\nu C (\max\{\mathcal{E}^\ell_{in},\mathcal{E}^0_{in}\})^{-\frac{1}{\ell}}E^\ell(t)^{1+\frac{1}{\ell}}\leq 0,
		\end{equation}
		where $C$ is a constant independent of $\ell$. Hence, from Gr\"onwall's Lemma we get
		\begin{equation}
			E^\ell(t)\leq E^\ell_{in}(\nu\widetilde{C}_{in}^\ell t+ 1)^{-\ell},
		\end{equation}
		where $\widetilde{C}_{in}^{\ell}=C(E^\ell_{in})^{\frac{1}{\ell}}(\max\{\mathcal{E}^\ell_{in},\mathcal{E}^0_{in}\})^{-\frac{1}{\ell}}$, whence proving \eqref{bd:Eldecay} in view of \eqref{bd:coerc0}.
	\end{proof}
	\section{The inviscid case} \label{sec:invcompcouette}
	In this section we investigate in detail the inviscid case. We are going to prove the results stated in Theorem \ref{th:NScouetteintro} when $\nu=\lambda=0$, for which it is convenient to treat separately the analysis for the upper and lower bounds, respectively given in Theorem \ref{maintheorem} and  Theorem \ref{maintheoremlwz}.  As observed in Remark \ref{rem:nozero}, we can remove the $x$-average from the dynamics, so we will prove the results only for initial perturbations without the $k=0$ mode, namely ${\rho}_{0}^{in}={\alpha}_{0}^{in}={\omega}_{0}^{in}=0$. 
	
	To proceed with the analysis of the system \eqref{eq:contcouetteint}-\eqref{eq:vorticitycouetteint}, we eliminate the transport term with the change of coordinates \eqref{def:movframe} and we use the notation defined in \eqref{def:DeltaL}-\eqref{def:RAO}. 
	
	By adding \eqref{eq:contcouetteint} to \eqref{eq:vorticitycouetteint}, we find out that $\rho+\omega$ is transported by the Couette flow. Hence, defining 
	\begin{equation}
		\label{def:Xi}
		\Xi(t,X,Y):=R(t,X,Y)+\Omega(t,X,Y),
	\end{equation}
	we have that $\dt \Xi=0$. Consequently 
	\begin{equation}
		\label{eq:decOmega}
		\Omega(t,X,Y)=\Xi^{in}(X,Y)-R(t,X,Y),
	\end{equation}
	where $\Xi^{in}=\omega^{in}+\rho^{in}$. In view of \eqref{eq:v2}, we also have 
	\begin{align*}
		V^y=&(\dY-t\dX)\Delta_L^{-1}A+\dX\Delta_L^{-1}\Omega \\
		=&(\dY-t\dX)\Delta_L^{-1}A+\dX\Delta_L^{-1}\Xi^{in}-\dX\Delta_L^{-1}R.
	\end{align*}
	We can thus rewrite the system \eqref{eq:contcouetteint}-\eqref{eq:vorticitycouetteint} in the moving frame only in terms of $A$ and $R$ as follows 
	\begin{align}
		\label{eq:R}\dt R=&-A,\\
		\begin{split}
			\label{eq:A}\dt A=&-2\dX(\dY-t\dX)(\Delta_L^{-1})A+\left(-\frac{1}{\M^2}\Delta_L+2\dXX(\Delta_L^{-1})\right)R\\ &-2\dXX(\Delta_L^{-1})\Xi^{in}.
		\end{split}
	\end{align}
	We stress again the importance of the identity \eqref{eq:decOmega}, which not only allow us to study the system in terms of density and divergence but also relates compressible and incompressible effects, see Section \ref{rem:potvort}.
	
	In view of the particular choice of the domain, it is now natural to perform the analysis in the Fourier space.
	\subsection{Analysis in the Fourier space}
	We first take the Fourier transform in both space variables of the system \eqref{eq:R}-\eqref{eq:A}, which become a non-autonomous $2\times 2$ dynamical system at each fixed frequency $(k,\eta)$. Then, by properly weighting the density and the divergence we characterize the solution operator of the associated homogeneous problem, i.e. $\Xi^{in}=0$, which is a key point in order to prove Theorem \ref{th:NScouetteintro}.
	
	By taking the Fourier transform of the system \eqref{eq:R}-\eqref{eq:A}, recalling the notation introduced in \eqref{def:sigma}-\eqref{def:p'}, we get that 
	\begin{align}
		\label{eq:hR0} &\dt \hR=-\hA\\
		\label{eq:hA0}&\dt \hA=\frac{\dt \p}{\p}\hA+\bigg(\frac{\p}{\M^2}+\frac{2k^2}{\p}\bigg)\hR-\frac{2k^2}{\p}\hXi^{in}.
	\end{align}
	Since in what follows we consider $k, \eta$ as fixed parameters, we will omit their dependence for the quantities under study.  In Figure \ref{fig:plotinvcomp1} we show some numerical simulations of the system above. 
	\begin{remark}[Transient decay]
		\label{rem:trandec}
		From \eqref{eq:hA0}, since for $t<\eta/k$ one has $\dt p<0$, the first term in the right-hand side of \eqref{eq:hA0} acts as a damping term for $\widehat{A}$. Instead, $\dt p>0$ for $t>\eta/k$, hence it induces a growth on $\widehat{A}$. In the incompressible case, the velocity may experience a transient growth,  here, we see that the divergence may have a \textit{transient decay}, see Figures \ref{fig:transAM1} and \ref{fig:transAM50}. To balance the growth generated by this term we need to properly weight $\hR$ and $\hA$. 
	\end{remark}
	\begin{figure}[t]
		\centering 
		\begin{subfigure}{0.32\textwidth}
			\includegraphics[width=\linewidth]{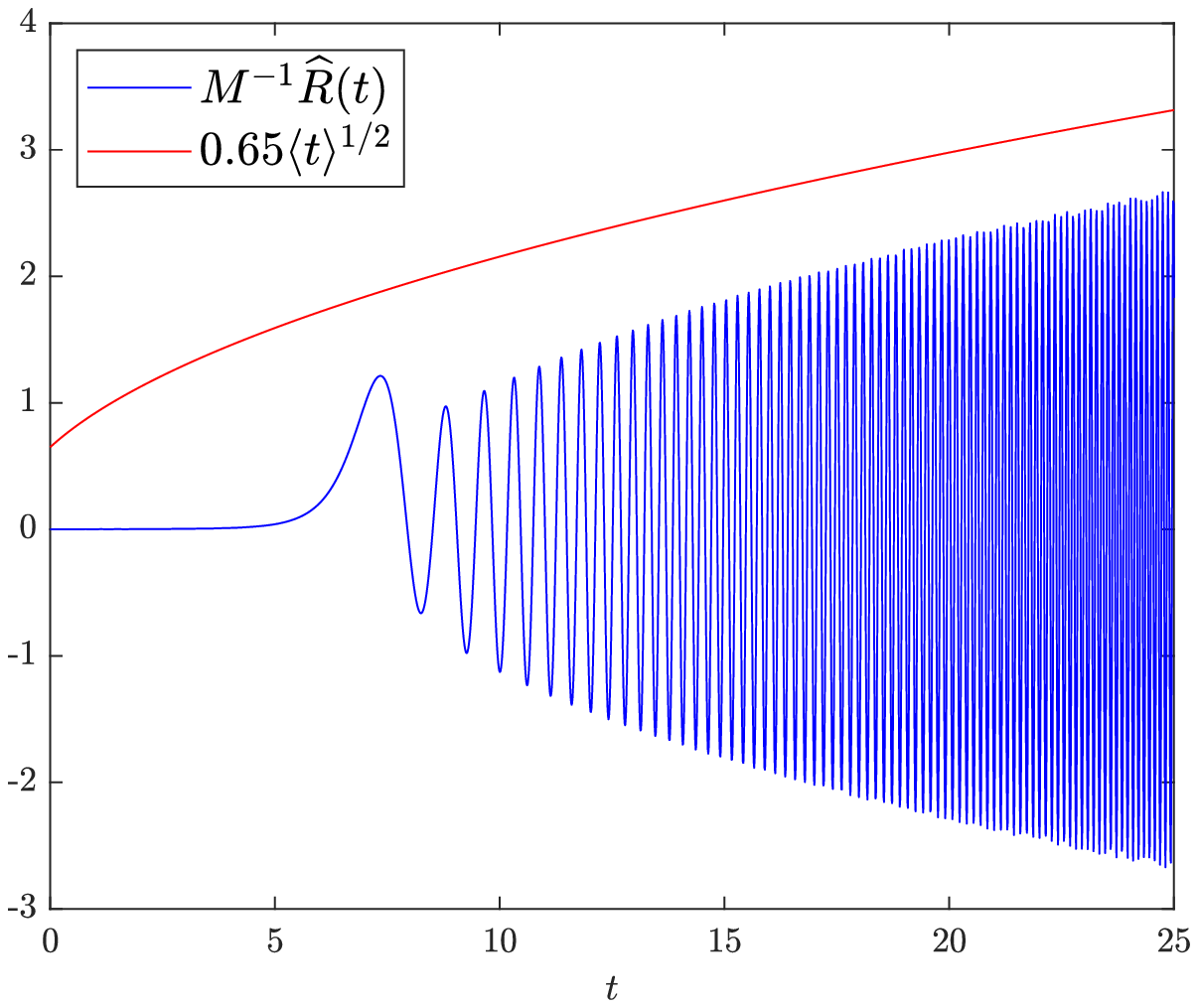}
			\caption{$M=1$}
			\label{fig:RM1}
		\end{subfigure}\hfil 
		\begin{subfigure}{0.32\textwidth}
			\includegraphics[width=\linewidth]{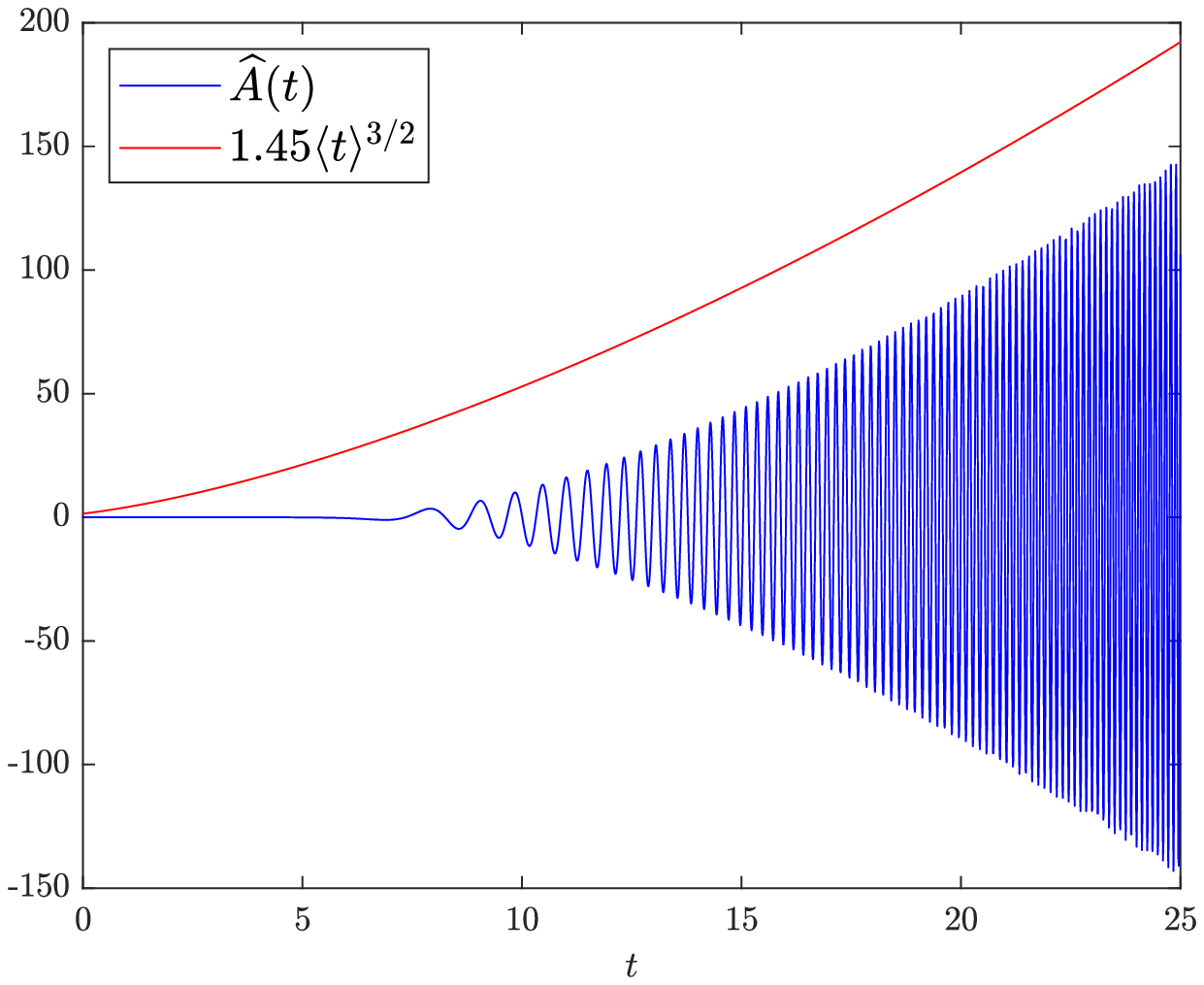}
			\caption{$M=1$}
			\label{fig:AM1}
		\end{subfigure}\hfil 
		\begin{subfigure}{0.32\textwidth}
			\includegraphics[width=\linewidth]{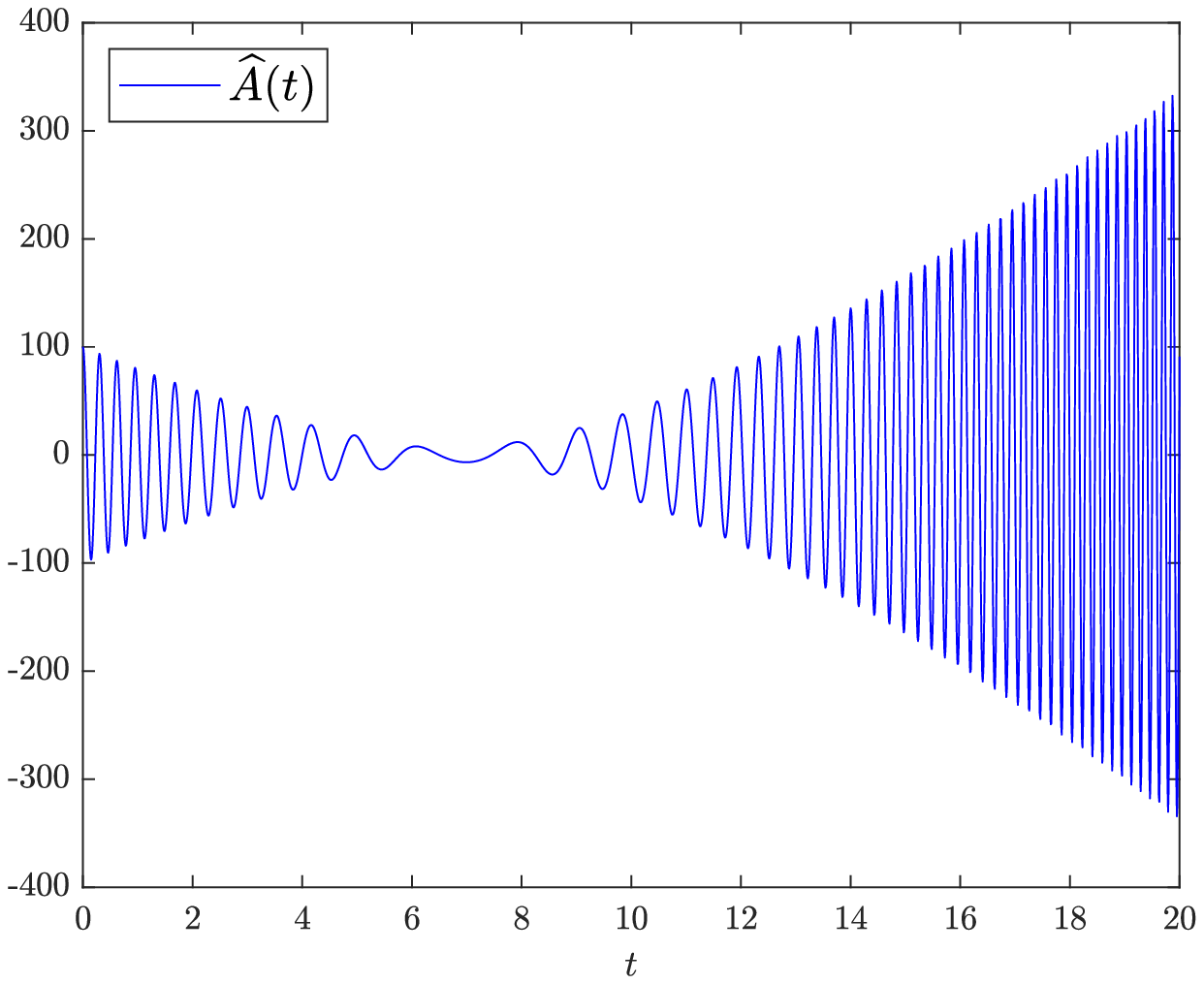}
			\caption{$M=1$}
			\label{fig:transAM1}
		\end{subfigure}
		\medskip
		\centering 
		\begin{subfigure}{0.32\textwidth}
			\includegraphics[width=\linewidth]{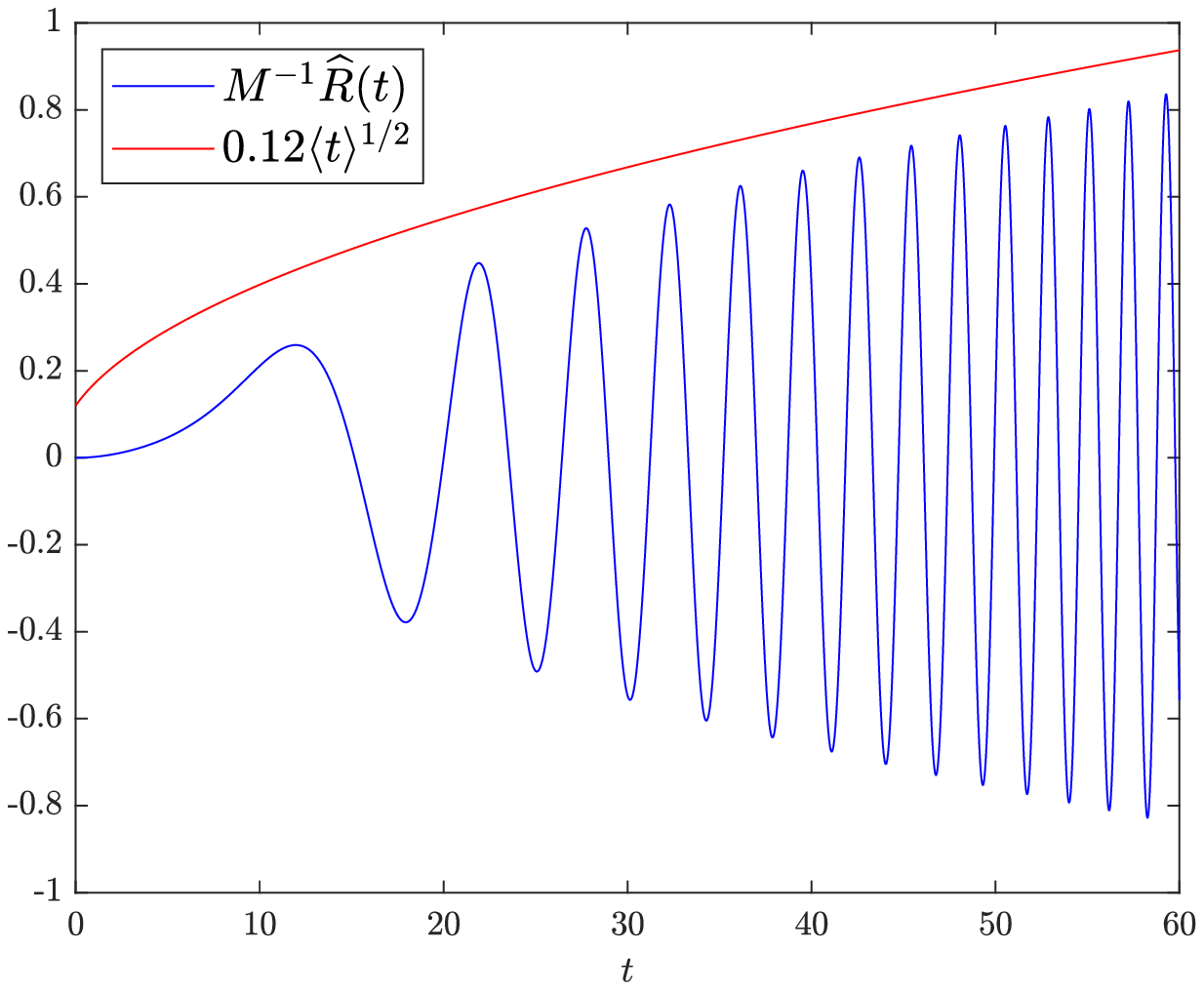}
			\caption{$M=50$}
			\label{fig:RM50}
		\end{subfigure}\hfil 
		\begin{subfigure}{0.32\textwidth}
			\includegraphics[width=\linewidth]{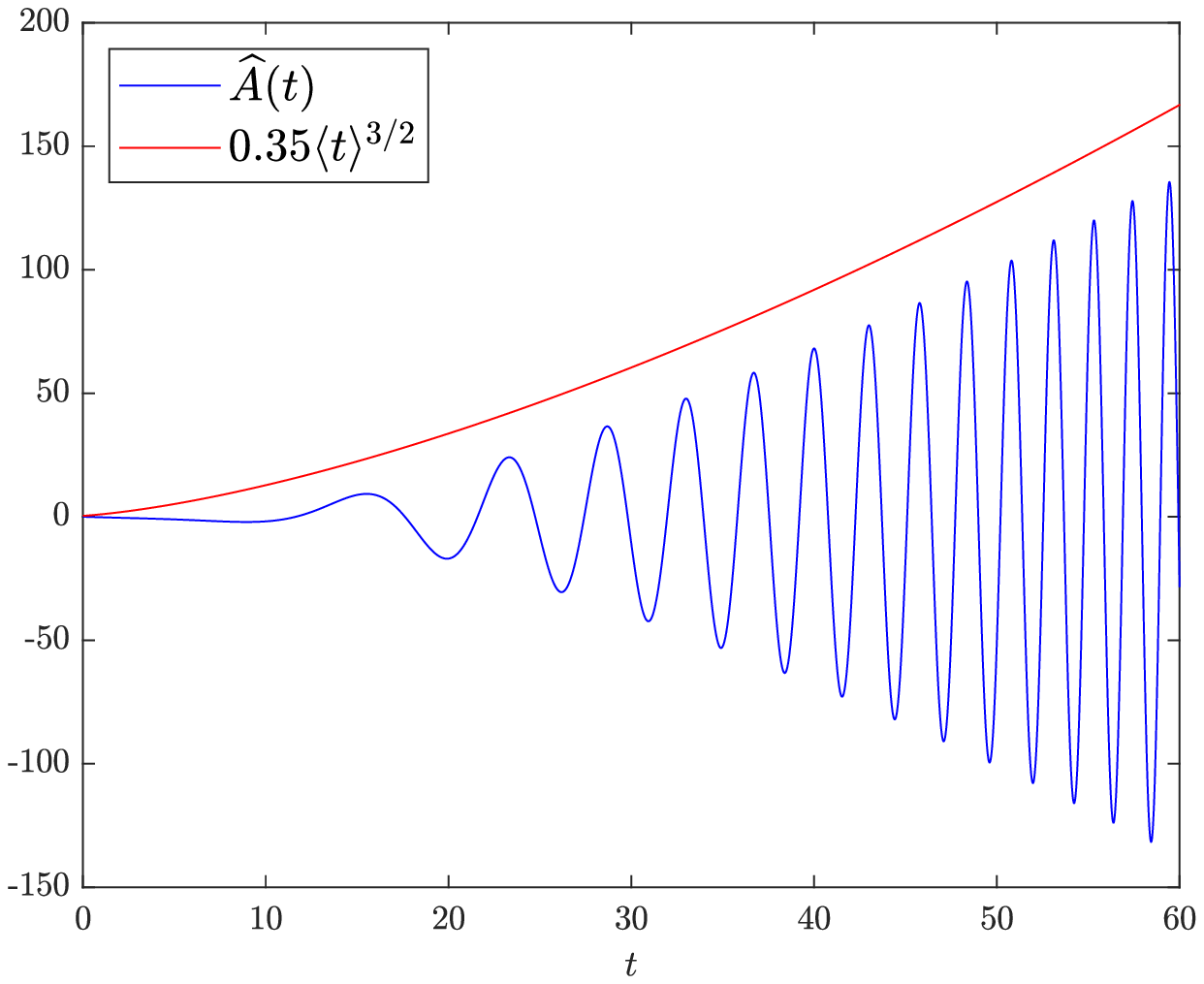}
			\caption{$M=50$}
			\label{fig:AM50}
		\end{subfigure}\hfil 
		\begin{subfigure}{0.32\textwidth}
			\includegraphics[width=\linewidth]{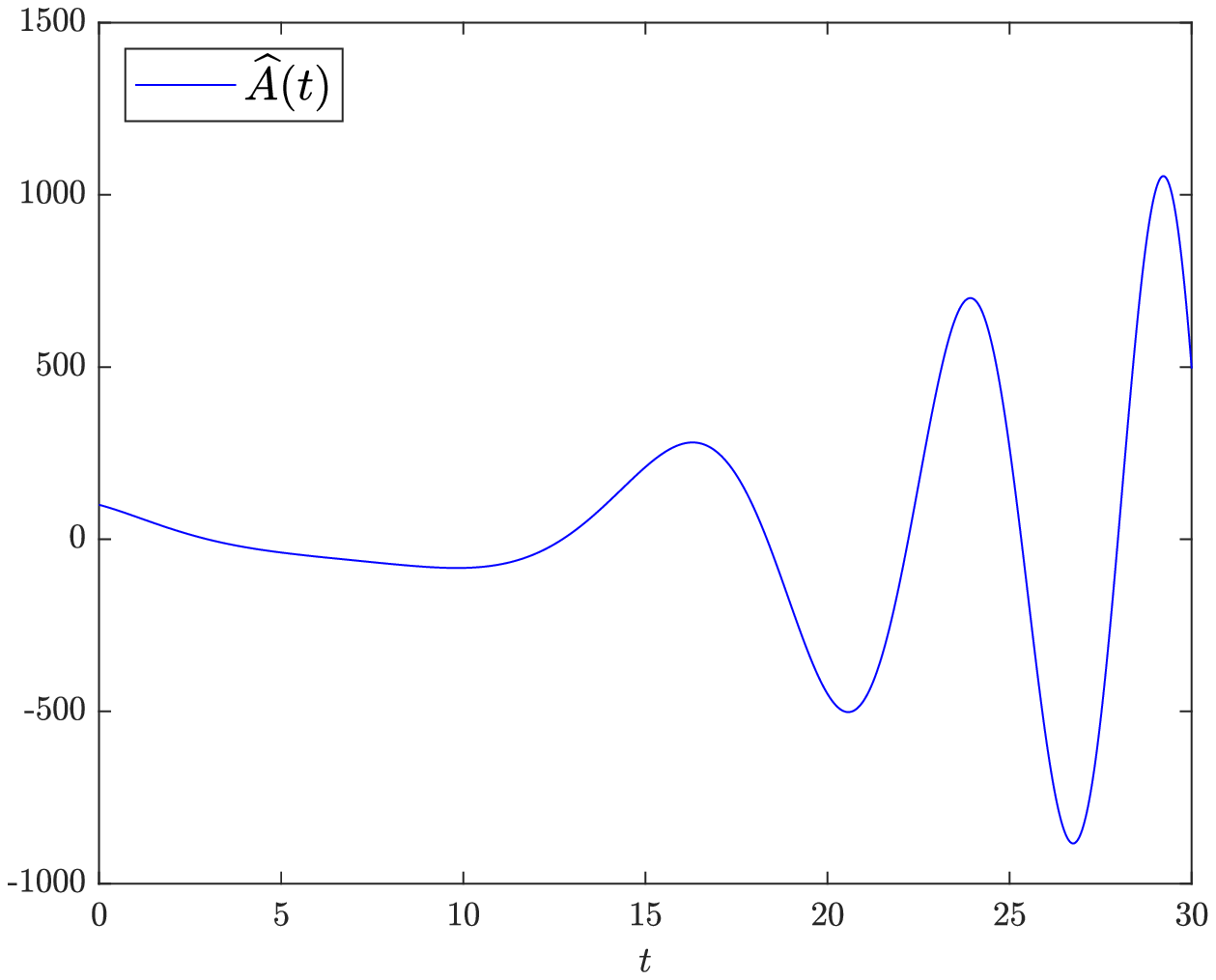}
			\caption{$M=50$}
			\label{fig:transAM50}
		\end{subfigure}
		\caption{Numerical simulations of the system \eqref{eq:hR0}-\eqref{eq:hA0} at fixed frequencies $k=3, \eta=21$ for different values of the Mach number. The red lines are the expected asymptotic behaviours. In the figures \ref{fig:RM1}, \ref{fig:AM1}, \ref{fig:RM50} and \ref{fig:AM50} we consider  $\widehat{R}^{in}=\widehat{A}^{in}=0$ and $\widehat{\Xi}^{in}=5$. In the figures \ref{fig:transAM1} and \ref{fig:transAM50} we set $\widehat{R}^{in}=20$, $\widehat{A}^{in}=50$ and $\widehat{\Xi}^{in}=5$. Notice the transient decay for the divergence, see Remark \ref{rem:trandec}, up to times close to the critical one, namely $t= 7$. Numerical simulation performed with MATLAB R2019.}
		\label{fig:plotinvcomp1}
	\end{figure}
	\begin{remark}[Wave equation for $\widehat{R}$]
		\label{rem:waveR}	Combining the equations \eqref{eq:hR0}-\eqref{eq:hA0} we have that 
		\begin{equation}
			\partial_{tt} \widehat{R}-\frac{\dt p}{p}\dt \widehat{R}+\bigg(\frac{\p}{\M^2}+\frac{2k^2}{\p}\bigg)\hR=\frac{2k^2}{\p}\hXi^{in}.
		\end{equation}
		In the physics literature the equation above is solved approximately for $M\ll 1$ \cite{bakas2009mechanisms,chagelishvili1994hydrodynamic,chagelishvili1997linear,goldreich1965ii,hau2015comparative},  for example in \cite{bakas2009mechanisms} is used a WKB approximation. Indeed, for $\Xi^{in}=0$ and assuming $M\ll 1$ one can say that the previous equation is approximated by 
		\begin{equation}
			M^2\partial_{tt} \hR_{app}=-p\hR_{app},
		\end{equation} 
		then, making a WKB ansatz, i.e. $\hR_{app}(t)=\exp(\delta^{-1}\sum_{n=0}^{+\infty}\delta^{n}S_n(t))$, a first order approximation satisfy 
		\begin{equation}
			\label{eq:wkbR}
			\hR_{app}(t)\approx S_1(t)=p^{\frac14}(t)\left(c_1e^{\frac{i}{M}\int_0^t \sqrt{p}(\tau)d\tau}+c_2e^{-\frac{i}{M}\int_0^t \sqrt{p}(\tau)d\tau} \right).
		\end{equation}
		In particular, recalling that $p=k^2+(\eta-kt)^2$, the previous formal analysis suggest that $|\hR|^2$ should grow linearly in time. In the following, we essentially prove the validity of this  asymptotic behaviour without the aid of any formal approximation.
	\end{remark}
	\subsubsection{Symmetrization}
	In order to study the system \eqref{eq:hR0}-\eqref{eq:hA0}, 	we want to look for a proper symmetrization of the system above. So  we define
	\begin{align}
		\label{def:hZ}
		Z(t)=(Z_1(t),Z_2(t))^T=\begin{pmatrix}
			&\displaystyle\frac{\hR}{\M\p^{\frac14}}(t),
			&\displaystyle\frac{\hA}{\p^{\frac34}}(t)
		\end{pmatrix}^T.
	\end{align}
	Observe that if we are able to get a uniform bound on $|Z|$, in view of the weight on $R$, we will match the asymptotic behaviour predicted by \eqref{eq:wkbR}. 
	By a direct computation we find that $Z(t)$ satisfy
	\begin{equation}
		\label{eq:dtZ}
		\begin{split}
			\Dt Z(t)=&L(t)Z(t)+F(t)\hXi^{in},\\
			Z(0)=&Z^{in}
		\end{split}
	\end{equation}
	where  
	\begin{equation}
		\label{def:LF}
		L(t)= \begin{bmatrix}
			\displaystyle -\frac{\dt p}{4\p} & \displaystyle -\frac{\sqrt{\p}}{\M} \\
			\displaystyle \frac{\sqrt{\p}}{\M} +\frac{2\M k^2}{\p^{3/2}}&\displaystyle \frac{\dt p}{4\p}
		\end{bmatrix},\ \ \ F(t)=\begin{pmatrix}
			0 \\\displaystyle -\frac{2k^2}{\p^{7/4}}
		\end{pmatrix}
	\end{equation}
	and 
	\begin{equation}
		\label{def:Zin}
		Z^{in}=\left(\frac{1}{M(k^2+\eta^2)^{\frac14}}\hR^{in},\frac{1}{(k^2+\eta^2)^{\frac34}}\hA^{in}\right)^T
	\end{equation}
	A key property coming from the choice of the weights on $R,A$ given in the definition \eqref{def:hZ} is that the matrix $L(t)$ is trace-free.
	
	We now have to deal with a non-autonomous 2D dynamical system. The solution of \eqref{eq:dtZ}, given by Duhamel's formula, is
	\begin{equation}
		\label{eq:solZ}
		Z(t)=\Phi_L(t,0)\bigg(Z^{in}+\int_{0}^{t}\Phi_L(0,s)F(s)\hXi^{in}ds\bigg),
	\end{equation}
	where $\Phi_L$ is the solution operator defined in \eqref{def:mhiGamma}. Notice that $\Phi_L\neq \exp(L)$ since $L(t)L(s)\neq L(s)L(t)$.    Therefore, everything is reduced in studying properties of the operator $\Phi_L$, which is equivalent to the study of the homogeneous problem associated to \eqref{eq:dtZ}. 
	\subsubsection*{Properties of $\Phi_L$}
	In order to investigate properties of \eqref{eq:dtZ} when $\Xi^{in}=0$, let us first consider the following toy example
	\begin{equation*}
		\begin{split}
			\Dt Z(t)=&\begin{bmatrix}
				-a & -b \\ d & a
			\end{bmatrix}Z(t),
			\\
			Z(0)=&Z^{in},
		\end{split}
	\end{equation*}
	where $a,b,d\in \R$, $b,d \neq0$, $bd>0$ and $Z^{in}\in \R^2$. Then, one can check that $Z(t)$ satisfy 
	\begin{equation*}
		E(t)=\sqrt{\frac{d}{b}}|Z_1|^2(t)+\sqrt{\frac{b}{d}}|Z_2|^2(t)+2\frac{a}{\sqrt{db}}Z_1(t)Z_2(t)=E(0).
	\end{equation*}
	In particular, if $\sqrt{bd}>a$ then a trajectory in the phase space is an ellipse determined by the equation above. In the non-autonomous case, we cannot expect to have immediately a conserved quantity. However, we have the following lemma which play a crucial role in our subsequent analysis.
	\begin{lemma}
		\label{keylemma}
		Let $Z(t)$ be a solution to \eqref{eq:dtZ} with $\hXi^{in}=0$. Define 
		\begin{equation}
			\label{def:abd}
			a(t)=\frac14 \frac{\dt p}{p}, \qquad b(t)=\frac{\sqrt{p}}{M}, \qquad d(t)=\frac{\sqrt{p}}{M}+\frac{2Mk^2}{p^{3/2}}.
		\end{equation}
		and 
		\begin{equation}
			\label{def:tildeE}
			E(t)=\left(\sqrt{\frac{d}{b}}|Z_1|^2\right)(t)+\left(\sqrt{\frac{b}{d}}|Z_2|^2\right)(t)+2\left(\frac{a}{\sqrt{db}}\Re(Z_1\bar{Z}_2)\right)(t).
		\end{equation}
		Then, there exists constants $c_1,C_1,c_2,C_2>0$ independent of $k,\eta$ such that 
		\begin{equation}
			\label{bd:enkeylemma}
			c_1E(0)\leq E(t)\leq C_1 E(0),
		\end{equation} 
		and
		\begin{equation}
			\label{bd:upplowkey}
			c_2|Z^{in}|\leq \big|Z(t)|\leq C_2|Z^{in}|.
		\end{equation}
		In addition, let $\Re(Z_1(t))=r(t)\cos(\theta(t))$ and $\Re(Z_2(t))=r(t)\sin(\theta(t))$ (or the imaginary part), then we have
		\begin{equation}
			\label{eq:dttheta}
			\Dt \theta(t)= \frac{\sqrt{p}}{M}+\frac{2Mk^2}{p^{3/2}}\cos(\theta(t))^2+\frac14 \frac{\dt p}{p}\sin(2\theta(t)).
		\end{equation}
	\end{lemma}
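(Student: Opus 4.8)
The symmetrization \eqref{def:hZ}--\eqref{def:LF} is designed precisely so that the quadratic form $E$ in \eqref{def:tildeE} is the \emph{frozen-coefficient} invariant of the flow: if $a,b,d$ from \eqref{def:abd} were constants, then $E$ would be exactly conserved along $\Dt Z=L(t)Z$ (equivalently, $L$ is skew with respect to the bilinear form with matrix $Q:=\begin{pmatrix}\sqrt{d/b}& a/\sqrt{db}\\ a/\sqrt{db}& \sqrt{b/d}\end{pmatrix}$). Hence, along a genuine solution, $\Dt E$ will only retain the contributions coming from the time-dependence of $a,b,d$. My plan is: (i) show $E(t)$ is uniformly comparable to $|Z(t)|^2$; (ii) compute $\Dt E$ and bound it by $g(t)E(t)$ with $g\in L^1(0,\infty)$; (iii) conclude \eqref{bd:enkeylemma} by Grönwall and deduce \eqref{bd:upplowkey} from (i); (iv) derive \eqref{eq:dttheta} by a direct computation with the real part of the system. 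For (i) I would first record from \eqref{def:sigma}--\eqref{def:abd} that $db=\p/\M^2+2k^2/\p$, $a^2=k^2(\eta-kt)^2/(4\p^2)$ and $d/b=1+2\M^2k^2/\p^2$; since $k^2\le\p$ and $(\eta-kt)^2\le\p$, this gives $a^2\le\frac14\,k^2/\p\le\frac18 db$ and $1\le d/b\le1+2\M^2$. Consequently $\det Q=1-a^2/(db)\in[\tfrac78,1)$ and $\mathrm{tr}\,Q=\sqrt{d/b}+\sqrt{b/d}\in[2,\,1+\sqrt{1+2\M^2}]$, so both eigenvalues of $Q$ are bounded above and below by constants depending only on $\M$; since $E=(\Re Z)^TQ(\Re Z)+(\Im Z)^TQ(\Im Z)$, this yields $c(\M)|Z(t)|^2\le E(t)\le C(\M)|Z(t)|^2$ with $c,C$ independent of $k,\eta$.

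\textbf{Near-conservation.} Next I would compute, from \eqref{eq:dtZ}--\eqref{def:LF},
\[
\Dt|Z_1|^2=-2a|Z_1|^2-2b\Re(Z_1\bar Z_2),\quad \Dt|Z_2|^2=2d\Re(Z_1\bar Z_2)+2a|Z_2|^2,\quad \Dt\Re(Z_1\bar Z_2)=d|Z_1|^2-b|Z_2|^2.
\]
Plugging these into $\Dt E$, the terms not involving $\dt$ of $(a,b,d)$ cancel identically (precisely because $L$ is $Q$-skew), leaving
\[
\Dt E=\big(\dt\sqrt{d/b}\big)|Z_1|^2+\big(\dt\sqrt{b/d}\big)|Z_2|^2+2\big(\dt(a/\sqrt{db})\big)\Re(Z_1\bar Z_2).
\]
Using $\dt\p=-2k(\eta-kt)$ and $\dt^2\p=2k^2$, together with the bounds above (in particular $\sqrt{d/b}\ge1$ and $(1+2\M^2)^{-1/2}\le\sqrt{b/d}\le1$, so no weight degenerates), a routine estimate gives $|\dt\sqrt{d/b}|+|\dt\sqrt{b/d}|+|\dt(a/\sqrt{db})|\lesssim_\M k\,\p^{-3/2}+k^2\,\p^{-3/2}$. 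By the change of variables $u=\eta-kt$ one has $\int_0^\infty k\,\p^{-3/2}dt\le 2/k^2\le2$ and $\int_0^\infty k^2\,\p^{-3/2}dt\le2$, so $|\Dt E(t)|\le g(t)|Z(t)|^2$ with $g\ge0$, $\int_0^\infty g\le C(\M)<\infty$. Combining with the coercivity of the first step, $|\Dt E|\le c(\M)^{-1}g(t)E$, and Grönwall yields \eqref{bd:enkeylemma}; then \eqref{bd:upplowkey} follows since $|Z(t)|^2\simeq E(t)\simeq E(0)\simeq|Z^{in}|^2$.

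\textbf{The angle equation.} Since $L(t)$ has real entries, $(\Re Z_1,\Re Z_2)$ (and likewise $(\Im Z_1,\Im Z_2)$) solves the real linear system $\Dt\Re Z_1=-a\Re Z_1-b\Re Z_2$, $\Dt\Re Z_2=d\Re Z_1+a\Re Z_2$. Writing $(\Re Z_1,\Re Z_2)=r(\cos\theta,\sin\theta)$ and differentiating $\tan\theta=\Re Z_2/\Re Z_1$ gives
\[
\Dt\theta=\frac{(\Re Z_2)'\,\Re Z_1-(\Re Z_1)'\,\Re Z_2}{r^2}=d\cos^2\theta+a\sin(2\theta)+b\sin^2\theta;
\]
then $b\sin^2\theta+d\cos^2\theta=b+(d-b)\cos^2\theta=\frac{\sqrt\p}{\M}+\frac{2\M k^2}{\p^{3/2}}\cos^2\theta$ and $a=\frac14\frac{\dt\p}{\p}$ give exactly \eqref{eq:dttheta}.

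\textbf{Main obstacle.} The delicate point is the near-conservation step: one must check that the frozen-flow contributions to $\Dt E$ cancel exactly and that each surviving weight-derivative is integrable in $t$ uniformly in $(k,\eta)$. Both rely on the elementary algebra $d\ge b>0$, $a^2\le\frac18 db$, $d/b\le1+2\M^2$ of the first step, which prevents any of the weights $\sqrt{d/b}$, $\sqrt{b/d}$, $a/\sqrt{db}$ from blowing up or vanishing, so that $\Dt E$ can be controlled by an $L^1_t$ multiple of $E$.
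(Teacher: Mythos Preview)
Your proposal is correct and follows essentially the same strategy as the paper: coercivity of $E$, the exact cancellation of the frozen-coefficient terms in $\Dt E$, uniform-in-$(k,\eta)$ time integrability of the remaining weight derivatives, and Grönwall. The only noteworthy difference is in bounding $\int_0^\infty|\dt(a/\sqrt{db})|\,dt$: you estimate the derivative pointwise by $C_\M\,k^2 p^{-3/2}$ and integrate, whereas the paper observes that $a/\sqrt{db}$ is a bounded rational function of $t$ (with $|a/\sqrt{db}|\le 1/(2\sqrt{2})$) changing sign only finitely many times, so its total variation is bounded by a universal constant; both arguments work, yours being more hands-on and the paper's slightly slicker. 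Your eigenvalue analysis of $Q$ and the paper's use of $\widetilde E=\sqrt{d/b}\,|Z_1|^2+\sqrt{b/d}\,|Z_2|^2$ are equivalent ways to extract coercivity.
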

	This Lemma shows that the trajectories of the homogeneous problem associated to \eqref{eq:dtZ} are contained inside an annular region of the $Z$-plane and rotate with an angular velocity given by $\theta$. In particular, since $d/b\to 1$ and $a/\sqrt{db}\to 0$ as $t\to\infty$, the limit cycle is a circle. Approximating \eqref{eq:dttheta} and retaining the leading order terms one may infer a dispersion relation like $M^{-1}\sqrt{k^2+(\eta-kt)^2}$, which was also observed in \cite{bakas2009mechanisms,hau2015comparative} and is the one suggested by the WKB approximation, see \eqref{eq:wkbR}. However, dispersive properties require a more delicate analysis which we do not pursue here.
	
	We now present the proof of the Lemma \ref{keylemma}.
	\begin{proof} \label{sec:proofkeylemma}
		First of all, we define  
		\begin{equation}
			\label{def:zetabeta}
			\zeta= \sqrt{\frac{d}{b}}, \qquad \beta=\sqrt{bd}.
		\end{equation} 
		Then, with the notation introduced in \eqref{def:abd}, for $\hXi^{in}=0$ the system \eqref{eq:dtZ} become
		\begin{align}
			\label{xdot}\Dt Z_1&=-aZ_1-bZ_2,\\
			\label{ydot}\Dt Z_2&=dZ_1+aZ_2.
		\end{align}
		Notice that 
		\begin{equation}
			\label{bd:zeta}
			1\leq \zeta^2=1+\frac{2M^2k^2}{p^2}\leq 1+2M^2,
		\end{equation}
		hence, multiplying \eqref{xdot} by $\zeta$ and dividing \eqref{ydot} by $\zeta$ we obtain that 
		\begin{align}
			\label{gammadotx}
			\zeta \Dt Z_1&=-a\zeta Z_1-\beta Z_2,\\
			\label{oneovergammadoty}\frac{1}{\zeta}\Dt Z_2&=\beta Z_1+\frac{a}{\zeta}Z_2.
		\end{align}
		Now we multiply \eqref{gammadotx} by $\bar{Z}_1$, \eqref{oneovergammadoty} by $\bar{Z}_2$ and we add the two equations to have that 
		\begin{equation}
			\label{enestpart}
			\frac{\zeta}{2}\Dt |Z_1|^2+\frac{1}{2\zeta}\Dt |Z_2|^2=-a\left(\zeta |Z_1|^2-\frac{1}{\zeta}|Z_2|^2\right).
		\end{equation}
		Then, since the matrix $L$ is trace-free, just observe the following 
		\begin{equation}
			\label{dtxy}
			\Dt \operatorname{Re}(Z_1\bar{Z}_2)=\beta \big(\zeta |Z_1|^2-\frac{1}{\zeta}|Z_2|^2\big).
		\end{equation}
		Plugging \eqref{dtxy} into \eqref{enestpart} we get that 
		\begin{equation*}
			\frac{\zeta}{2}\Dt |Z_1|^2+\frac{1}{2\zeta}\Dt |Z_2|^2+\frac{a}{\beta}\Dt\operatorname{Re}(Z_1\bar{Z}_2)=0.
		\end{equation*}
		Hence, in view of \eqref{def:zetabeta}, we have that $E(t)$, see \eqref{def:tildeE}, satisfy 
		\begin{equation}
			\label{eq:dttildeE}
			\Dt E=\Dt\bigg(\log(\zeta)\bigg) \zeta |Z_1|^2+\Dt\bigg(\log(\frac{1}{\zeta})\bigg)\frac{1}{\zeta}|Z_2|^2+2\Dt\bigg(\frac{a}{\beta}\bigg) \operatorname{Re}(Z_1\bar{Z}_2).
		\end{equation}
		Then, since $|\dt p|\leq 2|k|\sqrt{p}$, observe that 
		\begin{equation}
			\label{bd:abeta}
			\frac{|a|}{\beta}\leq \frac12 \frac{|k|}{\sqrt{p}}\left(\frac{ p}{M^2}+\frac{2k^2}{p}\right)^{-\frac12}\leq \frac12 \frac{|k|}{\sqrt{p}} \frac{\sqrt{p}}{\sqrt{2}|k|}=\frac{1}{2\sqrt{2}}. 
		\end{equation}
		Therefore, by calling $\widetilde{E}(t)=\zeta |Z_1|^2+\zeta^{-1} |Z_2|^2$, we obtain 
		\begin{equation}
			\label{bd:abetaE}
			-\frac12 \widetilde{E}(t)\leq 2(\frac{|a|}{\beta}\operatorname{Re}(Z_1\bar{Z}_2))(t)\leq \frac{1}{2}\widetilde{E}(t),
		\end{equation}
		consequently it follows that
		\begin{equation}
			\label{bd:coercivity}
			\frac12 \widetilde{E}(t)\leq E(t)\leq \frac{3}{2}\widetilde{E}(t).
		\end{equation}
		By combining \eqref{eq:dttildeE} with \eqref{bd:coercivity}, we get 
		\begin{equation}
			\label{bd:upperEtilde}
			\begin{split}
				\Dt E \leq& \frac32 \left(\left|\Dt \log(\zeta)\right|+\left|\Dt \left(\frac{a}{\beta}\right)\right|\right) \widetilde{E},\\
				\leq&\frac94\left(\left|\Dt \log(\zeta)\right|+\left|\Dt \left(\frac{a}{\beta}\right)\right|\right) E.
			\end{split}
		\end{equation}
		Analogously, we have
		\begin{equation}
			\label{bd:lowerEtilde}
			\Dt {E} \geq -\frac14 \left(\left|\Dt \log(\zeta)\right|+\left|\Dt \left(\frac{a}{\beta}\right)\right|\right) {E}.
		\end{equation}
		In order to apply the Gr\"onwall's Lemma, it remains to provide a uniform bound for the integral in time of the terms in bracket of \eqref{bd:lowerEtilde}. For the first one, observe that since $\dt \zeta^2=-4M^2k^2(\dt p)p^{-3}$ changes sign only in $t=\eta/k$ one has 
		\begin{equation}
			\label{bd:gron1}
			\begin{split}
				\int_{0}^\infty \left|\Dt \log(\zeta)\right|d\tau=&\frac12\int_{0}^\infty \left|\Dt \log(\zeta^2)\right|d\tau\\
				=&\frac12\log\left(\frac{\zeta^2(\eta/k)}{\zeta^2(0)}\right)+\frac12\log\left(\frac{\zeta^2(\eta/k)}{\zeta^2(+\infty)}\right)\\ 
				\leq& \ \log(1+2M^2),
		\end{split}	\end{equation} 
		where we have also used \eqref{bd:zeta}. For the one involving $a/\beta$, since the bound \eqref{bd:abeta} is uniform with respect to $M$, we simply observe that being $a/\beta$ a bounded rational function may change sign only a finite number of times $n_0$, so that 
		\begin{equation}
			\label{bd:gron2}
			\int_0^{+\infty}\left|\Dt \left(\frac{a}{\beta}\right)\right|d\tau\leq \frac{n_0}{\sqrt{2}}.
		\end{equation}
		Therefore, by combining \eqref{bd:upperEtilde}, \eqref{bd:lowerEtilde} with \eqref{bd:gron1} and \eqref{bd:gron2}, applying Gr\"onwall's Lemma we infer  
		\begin{equation}
			\label{boundEtilde}
			c_1{E}(0)\leq {E}(t)\leq C_1{E}(0),
		\end{equation}
		whence proving \eqref{bd:enkeylemma}. In addition, in view of \eqref{bd:coercivity}, from \eqref{boundEtilde} we get 
		\begin{equation}
			\label{bd:tildeE0}
			\widetilde{c}_2\widetilde{E}(0)\leq \widetilde{E}(t)\leq \widetilde{C}_2\widetilde{E}(0).
		\end{equation}
		Then,  thanks to \eqref{bd:zeta} we know that 
		\begin{equation}
			\label{bd:hZM}
			(1+2M^2)^{-1}\widetilde{E}(t)\leq |\Phi_L(t,0)Z^{in}|^2=|Z(t)|^2\leq(1+2M^2)\widetilde{E}(t), 
		\end{equation}
		and combining the inequalities above with \eqref{bd:tildeE0} we prove \eqref{bd:upplowkey}.	
		
		To prove \eqref{eq:dttheta} observe that the coefficients of the system \eqref{eq:dtZ} are all real valued. Therefore, being the system linear, the real and imaginary part decouples. Then \eqref{eq:dttheta} directly follows by the fact that $r^2\dot{\theta}=x\dot{y}-\dot{x}y$.
	\end{proof}
	\begin{remark}\label{rem:Minv}
		From the proof of Lemma \ref{keylemma}, in view of the bounds \eqref{bd:gron1} and \eqref{bd:hZM}, we also observe that the constants appearing in \eqref{bd:upplowkey} satisfy $c_2,C_2\approx \langle M\rangle^\beta$ for some $\beta>1$ explicitly computable.
	\end{remark} 
	\subsubsection{Upper and lower bounds}
	
	We can now present a more precise statement of the Theorem \ref{th:NScouetteintro} in the inviscid case by considering separately the upper and lower bounds. Regarding the upper bounds we have the following.
	\begin{theorem}
		\label{maintheorem}
		Let $\rho^{in}, \ \omega^{in}\in H^1_xH^2_y$ and $\alpha^{in}\in H^{-\frac{1}{2}}_xL^2_y$ be the initial data of \eqref{eq:contcouetteint}-\eqref{eq:vorticitycouetteint} with $\rho^{in}_0=\omega^{in}_0=\alpha^{in}_0=0$. 
		Then the following inequality hold
		\begin{equation}
			\label{inq:energybound}
			\begin{split}
				\norma{Q[\vv](t)}{L^2}+\frac{1}{\M}\norma{\rho(t)}{L^2}\lesssim \jap{t}^\frac12\bigg(\norm{\frac{\rho^{in}}{M}}_{L^2}+\norm{\alpha^{in}}_{H^{-1}}+\norm{\rho^{in}+\omega^{in}}_{H^1}\bigg).
			\end{split}
		\end{equation}
		For the solenoidal component of the velocity we have 
		\begin{equation}
			\label{inq:P1}\begin{split}
				\norma{P[\vv]^x(t)}{L^2}\lesssim&\ \frac{\M}{\langle t\rangle^\frac12}\bigg(\norma{ \frac{\rho^{in}}{\M}}{H^{-\frac12}_xL^2_y}+\norma{\alpha^{in}}{H^{-\frac12}_xH^{-1}_y} +\norma{ \rho^{in}+\omega^{in}}{H^{-\frac12}_xH^{\frac12}_y}\bigg)\\
				&+\frac{1}{\langle t\rangle}\norma{\rho^{in}+\omega^{in}}{H^{-1}_xH^1_y},
			\end{split}
		\end{equation}
		\begin{equation}
			\begin{split}
				\label{inq:P2}\norma{P[\vv]^y(t)}{L^2}\lesssim&\ \frac{\M}{\langle t\rangle^\frac32}\bigg(\norma{ \frac{\rho^{in}}{\M}}{H^{-\frac12}_xH^{1}_y}+\norma{\alpha^{in}}{H^{-\frac12}_xL^2_y}+\norma{ \rho^{in}+\omega^{in}}{H^{-\frac12}_xH^{\frac{3}{2}}_y}\bigg)\\
				&+\frac{1}{\langle t\rangle^2}\norma{\rho^{in}+\omega^{in}}{H^{-1}_xH^2_y}.
			\end{split}
		\end{equation}
	\end{theorem}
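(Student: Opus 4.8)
The plan is to reduce every quantity in \eqref{inq:energybound}--\eqref{inq:P2} to a pointwise--in--frequency estimate on the symmetrized unknown $Z(t)$ of \eqref{def:hZ}, for which Lemma \ref{keylemma} already supplies the two--sided control \eqref{bd:upplowkey}. By Remark \ref{rem:nozero} we may assume $k\neq0$ throughout. Since the change of coordinates \eqref{def:movframe} is measure preserving, hence an $L^2$ isometry, and turns $\Delta$ into $\Delta_L$ (see \eqref{def:DeltaL}), while $\widehat{\Omega}=\widehat{\Xi}^{in}-\widehat{R}$ by \eqref{eq:decOmega}, Plancherel yields, with $p=p(t,k,\eta)$, $\widehat{R}=M p^{1/4}Z_1$ and $\widehat{A}=p^{3/4}Z_2$,
\begin{align*}
	\norm{Q[\vv](t)}_{L^2}^2&=\sum_k\int\frac{|\widehat{A}|^2}{p}\,d\eta=\sum_k\int p^{\frac12}|Z_2|^2\,d\eta,\\
	\frac{1}{M^2}\norm{\rho(t)}_{L^2}^2&=\sum_k\int p^{\frac12}|Z_1|^2\,d\eta,\\
	\norm{P[\vv]^x(t)}_{L^2}^2&=\sum_k\int\frac{(\eta-kt)^2}{p^2}\big|\widehat{\Xi}^{in}-M p^{\frac14}Z_1\big|^2\,d\eta,\\
	\norm{P[\vv]^y(t)}_{L^2}^2&=\sum_k\int\frac{k^2}{p^2}\big|\widehat{\Xi}^{in}-M p^{\frac14}Z_1\big|^2\,d\eta.
\end{align*}

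Next I would establish a master frequency bound. By Duhamel's formula \eqref{eq:solZ}, the group property of $\Phi_L$, and \eqref{bd:upplowkey} (which upgrades to $|\Phi_L(t,s)w|\lesssim|w|$ uniformly in $t,s,k,\eta$), recalling that $F(s)=(0,-2k^2 p(s)^{-7/4})^T$,
\begin{equation*}
	|Z(t)|\ \lesssim\ |Z^{in}|+\Big(\int_0^\infty\frac{2k^2}{p(s,k,\eta)^{7/4}}\,ds\Big)|\widehat{\Xi}^{in}|\ \lesssim\ |Z^{in}|+\langle k\rangle^{-3/2}|\widehat{\Xi}^{in}|,
\end{equation*}
the last step by the substitution $u=\eta-ks$. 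Feeding this into the first two identities, using $p(t,k,\eta)\lesssim\langle t\rangle^2(k^2+\eta^2)$ so that $p^{1/2}\lesssim\langle t\rangle(k^2+\eta^2)^{1/2}$, together with $(k^2+\eta^2)^{1/2}|Z^{in}|^2=M^{-2}|\widehat{\rho}^{in}|^2+(k^2+\eta^2)^{-1}|\widehat{\alpha}^{in}|^2$, and summing, gives \eqref{inq:energybound} (the $\widehat{\Xi}^{in}$ contribution is absorbed into $\langle t\rangle\norm{\rho^{in}+\omega^{in}}_{H^1}^2$ after dropping $\langle k\rangle^{-3}\le1$).

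For the solenoidal component I would split $\big|\widehat{\Xi}^{in}-M p^{1/4}Z_1\big|^2\lesssim|\widehat{\Xi}^{in}|^2+M^2 p^{1/2}|Z_1|^2$. The purely vortical piece $|\widehat{\Xi}^{in}|^2$ is handled by the classical inviscid--damping multiplier bounds for the Couette flow, $\frac{(\eta-kt)^2}{p^2}\lesssim\frac{\langle\eta\rangle^2}{\langle k\rangle^2\langle t\rangle^2}$ and $\frac{k^2}{p^2}\lesssim\frac{\langle\eta\rangle^4}{\langle k\rangle^2\langle t\rangle^4}$, which produce the $\langle t\rangle^{-1}\norm{\rho^{in}+\omega^{in}}_{H^{-1}_xH^1_y}$ and $\langle t\rangle^{-2}\norm{\rho^{in}+\omega^{in}}_{H^{-1}_xH^2_y}$ terms of \eqref{inq:P1}--\eqref{inq:P2}. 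The compressible piece $M^2 p^{1/2}|Z_1|^2$ is estimated with the one--power--weaker bounds $\frac{(\eta-kt)^2}{p^{3/2}}\lesssim\frac{\langle\eta\rangle}{\langle k\rangle\langle t\rangle}$ and $\frac{k^2}{p^{3/2}}\lesssim\frac{\langle\eta\rangle^3}{\langle k\rangle\langle t\rangle^3}$ combined with the master bound on $|Z_1|$ and the explicit form of $|Z^{in}|^2$; carefully tracking the $\langle k\rangle^{-1}$ factors then yields precisely the $\frac{M}{\langle t\rangle^{1/2}}$ and $\frac{M}{\langle t\rangle^{3/2}}$ terms with the stated $H^{-1/2}_x$ norms.

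I expect the main obstacle to be exactly this last step. The slow--decaying part of $\widehat{\Omega}=\widehat{\Xi}^{in}-\widehat{R}$ — that is, the exact conservation law \eqref{eq:decOmega} discussed in Section~\ref{rem:potvort} — injects the factor $M p^{1/4}Z_1$, of size $\sim\langle t\rangle^{1/2}$, into the solenoidal velocity, and it is this that degrades the inviscid--damping rates by half a power of $t$. One must therefore allocate the $y$--regularity budget so that the $p^{1/4}$ weight, the multiplier decay, the weights hidden inside $Z^{in}$, and the $\langle k\rangle^{-3/2}$ gained from the source integral all balance to the precise norms in \eqref{inq:P1}--\eqref{inq:P2}. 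The bounds on $Z$ itself are not the difficulty, following from Lemma \ref{keylemma} together with Duhamel's formula and the elementary estimate on the source term $F$.
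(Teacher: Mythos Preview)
Your proposal is correct and follows essentially the same route as the paper: pass to the moving frame, use Plancherel to express all four norms through $p^{1/2}|Z|^2$ and the multipliers $(\eta-kt)^2/p^2$, $k^2/p^2$ acting on $\widehat{\Omega}=\widehat{\Xi}^{in}-Mp^{1/4}Z_1$, control $|Z(t)|$ uniformly by $|Z^{in}|+|k|^{-3/2}|\widehat{\Xi}^{in}|$ via Lemma~\ref{keylemma} and the Duhamel/source integral \eqref{inq:PhiL1}, and then use the elementary bound $\langle \eta/k-t\rangle\langle\eta/k\rangle\gtrsim\langle t\rangle$ to convert the multipliers into the stated time decay and Sobolev weights. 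The only differences from the paper are cosmetic: the paper treats the solenoidal estimates first and drops the $\langle k\rangle^{-3/2}$ gain on $\widehat{\Xi}^{in}$ (which is not needed for the stated norms), while you keep it and reverse the order.
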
          	    
	In view of the analysis in the frequency space that can be given through the Lemma \ref{keylemma}, it is also possible to give an estimate on general Sobolev norms.
	Consequently, we could choose a suitable Sobolev space where also the acoustic part decays, which in particular implies a mixing phenomenon.
	\begin{corollary}
		\label{corollary}
		Let $s\geq 1/2$, $\rho^{in},\omega^{in} \in H^{s-\frac12}$ and $\alpha^{in}\in H^{s-\frac32}$ be the initial data of \eqref{eq:contcouetteint}-\eqref{eq:vorticitycouetteint} with $\rho^{in}_0=\omega^{in}_0=\alpha^{in}_0=0$.	     Then 
		\begin{equation}
			\label{decay}
			\norma{Q[\vv](t)}{H^{-s}}+\frac{1}{\M}\norma{\rho(t)}{H^{-s}}\leq \frac{1}{\langle t \rangle^{s-\frac12}}C\big(\rho^{in},\alpha^{in},\omega^{in}\big),
		\end{equation}
		where the constant depends upon Sobolev norms of the initial data.
	\end{corollary}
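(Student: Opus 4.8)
The plan is to re-run the Fourier-space analysis that underlies Theorem~\ref{maintheorem}, but this time keeping careful track of the frequency weights $\langle k,\eta\rangle$, so that the $\langle t\rangle^{1/2}$ growth in $L^2$ gets converted into $\langle t\rangle^{-(s-1/2)}$ decay in $H^{-s}$ at the cost of $s-\tfrac12$ derivatives. As usual we pass to the moving frame \eqref{def:movframe}--\eqref{def:RAO}, take the Fourier transform, and reduce everything to the variable $Z(t,k,\eta)$ of \eqref{def:hZ}, which solves \eqref{eq:dtZ} with solution given by Duhamel's formula \eqref{eq:solZ}.

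The first step is a pointwise-in-frequency bound on $Z$. Since $L(t)$ is trace-free, Lemma~\ref{keylemma} gives $|\Phi_L(t,s)W|\approx |W|$ uniformly in $t,s,k,\eta$ (the lower bound in \eqref{bd:upplowkey}, combined with the group property of $\Phi_L$, controls $\Phi_L(0,s)$), so from \eqref{eq:solZ} and \eqref{def:LF},
\begin{equation*}
|Z(t,k,\eta)|\lesssim |Z^{in}(k,\eta)|+|\hXi^{in}(k,\eta)|\int_0^t\frac{2k^2}{\p(s,k,\eta)^{7/4}}\,ds .
\end{equation*}
The substitution $\sigma=\eta-ks$ yields $\int_0^\infty 2k^2\,\p(s,k,\eta)^{-7/4}\,ds=2|k|\int_\R(k^2+\sigma^2)^{-7/4}d\sigma\lesssim |k|^{-3/2}\le 1$, hence $|Z(t,k,\eta)|\lesssim |Z^{in}(k,\eta)|+|\hXi^{in}(k,\eta)|$, with $Z^{in}$ as in \eqref{def:Zin}. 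Next, the shear change of coordinates identifies the physical $y$-frequency of a moving-frame mode $(k,\eta)$ at time $t$ with $\eta-kt$, and $\langle k,\eta-kt\rangle^2=1+\p(t,k,\eta)$; moreover, in the moving frame $\tfrac1{\M}|\hR(t)|=\p(t)^{1/4}|Z_1(t)|$ and $|\widehat{Q[\vv]}(t)|=|\hA(t)|/\sqrt{\p(t)}=\p(t)^{1/4}|Z_2(t)|$. Combining these, by Plancherel,
\begin{equation*}
\norma{Q[\vv](t)}{H^{-s}}^2+\frac{1}{\M^2}\norma{\rho(t)}{H^{-s}}^2=\sum_{k\neq 0}\int_\R\frac{\p(t,k,\eta)^{1/2}}{(1+\p(t,k,\eta))^{s}}\,|Z(t,k,\eta)|^2\,d\eta,
\end{equation*}
which by the previous bound is $\lesssim \sum_{k\neq0}\int_\R \p(t)^{1/2}(1+\p(t))^{-s}\big(|Z^{in}(k,\eta)|^2+|\hXi^{in}(k,\eta)|^2\big)d\eta$.

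The heart of the matter is the elementary frequency inequality
\begin{equation*}
\langle t\rangle^{2s-1}\,\frac{\p(t,k,\eta)^{1/2}}{(1+\p(t,k,\eta))^{s}}\lesssim \langle\eta\rangle^{2s-1},\qquad s\geq\tfrac12,\ k\neq0,
\end{equation*}
which is proved by splitting on the critical time: one always has $\p^{1/2}(1+\p)^{-s}\le 1$ since $s\ge\tfrac12$; if $t\le 2|\eta|/|k|$ then $\langle t\rangle\lesssim\langle\eta\rangle$ (using $|k|\ge1$) and the claim is immediate, while if $t>2|\eta|/|k|$ then $|\eta-kt|\ge |k|t/2\ge t/2$, so $\p(t,k,\eta)\gtrsim t^2$, whence $\p^{1/2}(1+\p)^{-s}\lesssim t^{1-2s}$, which absorbs $\langle t\rangle^{2s-1}$ (the case $t\le1$ being trivial). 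Plugging this in and recalling $|Z^{in}(k,\eta)|^2\lesssim \tfrac1{\M^2}|\hR^{in}|^2\langle k,\eta\rangle^{-1}+|\hA^{in}|^2\langle k,\eta\rangle^{-3}$ together with $\langle\eta\rangle^{2s-1}\le\langle k,\eta\rangle^{2s-1}$, the right-hand side is bounded by $\langle t\rangle^{-(2s-1)}$ times $\tfrac1{\M^2}\norm{\rho^{in}}_{H^{s-1}}^2+\norm{\alpha^{in}}_{H^{s-2}}^2+\norm{\rho^{in}+\omega^{in}}_{H^{s-1/2}}^2$; since $H^{s-1/2}\hookrightarrow H^{s-1}$ and $H^{s-3/2}\hookrightarrow H^{s-2}$, this is controlled by the data norms in the statement, giving \eqref{decay}.

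The main obstacle is precisely the frequency inequality of the last paragraph: one has to extract exactly the $\langle t\rangle^{-(s-1/2)}$ rate while making sure the $s-\tfrac12$ derivatives that are lost land on the correct initial data (the $\hXi^{in}=\widehat{\rho^{in}}+\widehat{\omega^{in}}$ term is the binding constraint, needing precisely $H^{s-1/2}$), and to verify that the $Z^{in}$ contributions, which carry the extra $\langle k,\eta\rangle^{-1}$ and $\langle k,\eta\rangle^{-3}$ factors coming from the weights in \eqref{def:Zin}, are comfortably absorbed. Everything else is the same bookkeeping already performed for Theorem~\ref{maintheorem}.
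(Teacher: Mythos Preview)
Your proposal is correct and is precisely the argument the paper has in mind: the paper explicitly declines to prove the corollary, saying it ``can be directly deduced by the proof of Theorem~\ref{maintheorem}'', and what you have written is exactly that deduction---you reproduce the identity \eqref{ideproofQ} with the additional $H^{-s}$ weight $(1+\p)^{-s}$, invoke the same pointwise bound $|Z(t)|\lesssim |Z^{in}|+|\hXi^{in}|$ coming from Lemma~\ref{keylemma} and \eqref{inq:PhiL1}, and then replace the crude estimate $\sqrt{\p}\le\langle t\rangle\langle k,\eta\rangle$ used for \eqref{inq:energybound} by the sharper frequency inequality $\langle t\rangle^{2s-1}\p^{1/2}(1+\p)^{-s}\lesssim\langle\eta\rangle^{2s-1}$, whose case-splitting proof is correct. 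The bookkeeping on the initial-data regularity is also accurate.
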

	We will not prove the corollary above since its proof can be directly deduced by the proof of Theorem \ref{maintheorem}, which we present in detail.
	
	We now turn our attention to the lower bound \eqref{bd:lwcomp}. To state the results it is convenient to introduce 
	\begin{equation}
		\label{def:GammaZXi}
		\Gamma(t,Z^{in},\Xi^{in})=\widehat{Z}^{in}+\int_0^t\Phi_L(0,s)F(s)\widehat{\Xi}^{in}ds.
	\end{equation}
	We then have the following.
	\begin{theorem}
		\label{maintheoremlwz} Let $\rho^{in}, \ \omega^{in}\in L^2_xH^{-\frac12}_y$ and $\alpha^{in}\in H^{-\frac{3}{2}}_xH^{-2}_y$ with $\rho^{in}_0=\omega^{in}_0=\alpha^{in}_0=0$. Then the solution of \eqref{eq:contcouetteint}-\eqref{eq:vorticitycouetteint} with initial data $\rho^{in},\alpha^{in}, \omega^{in}$ satisfy
		\begin{align}
			\notag
			\norma{Q[\vv](t)}{L^2}+\frac{1}{\M}\norma{\rho(t)}{L^2}\gtrsim \langle t \rangle^\frac12\norm{\Gamma(t,Z^{in},\Xi^{in})}_{L^2_xH^{-1/2}_y},
		\end{align}
		where $Z^{in}$ is defined as in  \eqref{def:hZ} and $\Xi^{in}=\rho^{in}+\omega^{in}$.
	\end{theorem}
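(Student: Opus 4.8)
The plan is to transfer the statement to the moving-frame symmetrized variable $Z$ of \eqref{def:hZ} and then exploit the two-sided control on $\Phi_L$ provided by Lemma \ref{keylemma}. Write $\Gamma(t):=\Gamma(t,Z^{in},\Xi^{in})$ for short. First I would set up the dictionary between the quantities appearing in the statement and $Z$. Since the change of coordinates \eqref{def:movframe} preserves $L^2$ norms and commutes with the Helmholtz projection, and since in the moving frame $\widehat{Q[\vv]}$ equals $\sqrt{p}\,\Delta_L^{-1}$ applied to $\hA$ (hence has magnitude $p^{-1/2}|\hA|$), the relations $\hR = M p^{1/4}Z_1$, $\hA = p^{3/4}Z_2$ give, pointwise in $(k,\eta)$,
\[ \frac{1}{M^2}|\hR(t)|^2 + |\widehat{Q[\vv]}(t)|^2 = \sqrt{p(t,k,\eta)}\,|Z(t,k,\eta)|^2, \]
and therefore, after summing in $k$ and integrating in $\eta$,
\[ \frac{1}{M^2}\norm{\rho(t)}_{L^2}^2 + \norm{Q[\vv](t)}_{L^2}^2 = \sum_k\int \sqrt{p(t,k,\eta)}\,|Z(t,k,\eta)|^2\,d\eta. \]

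Next, by Duhamel's formula \eqref{eq:solZ} one has $Z(t,k,\eta) = \Phi_L(t,0)\Gamma(t)(k,\eta)$, and the lower bound in \eqref{bd:upplowkey} of Lemma \ref{keylemma} says precisely that $\Phi_L(t,0)$ expands Euclidean norms by a factor at least $c_2>0$, uniformly in $t$ and $(k,\eta)$; hence $|Z(t,k,\eta)|\ge c_2|\Gamma(t)(k,\eta)|$. For the symbol, since $k\ne0$ forces $|k|\ge1$, I would check the elementary inequality
\[ \sqrt{p(t,k,\eta)} = \big(k^2+(\eta-kt)^2\big)^{1/2} \gtrsim \frac{\langle t\rangle}{\langle\eta\rangle}, \]
distinguishing the regime $t\le 2|\eta|$, where $\langle t\rangle\lesssim\langle\eta\rangle$ and $\sqrt{p}\ge|k|\ge1$, from $t>2|\eta|$, where $|\eta-kt|\ge|k|t-|\eta|\ge t/2\gtrsim\langle t\rangle$. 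Chaining the two displays above with these two pointwise bounds yields
\[ \frac{1}{M^2}\norm{\rho(t)}_{L^2}^2 + \norm{Q[\vv](t)}_{L^2}^2 \gtrsim \langle t\rangle\sum_k\int\langle\eta\rangle^{-1}|\Gamma(t)(k,\eta)|^2\,d\eta = \langle t\rangle\,\norm{\Gamma(t)}_{L^2_xH^{-1/2}_y}^2, \]
and taking square roots together with $a+b\ge\sqrt{a^2+b^2}$ gives the stated bound. The regularity assumptions on $\rho^{in},\omega^{in},\alpha^{in}$ are exactly what is needed for $Z^{in}$ and the integral in \eqref{def:GammaZXi} to be well defined, the latter being absolutely convergent since $|\Phi_L(0,s)F(s)|\lesssim k^2 p(s,k,\eta)^{-7/4}$, whose integral over $s\in[0,\infty)$ is $\lesssim |k|^{-3/2}$.

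Once Lemma \ref{keylemma} is available the argument is essentially bookkeeping; the one substantive point is the symbol estimate, and with it the appearance of the weight $\langle\eta\rangle^{-1}$, i.e. of the space $L^2_xH^{-1/2}_y$ on the right-hand side. This half-derivative loss in $\eta$ is genuine: at the critical times $t\approx\eta/k$ one only has $\sqrt{p}\sim|k|$ rather than $\sqrt{p}\sim\langle t\rangle$, so the growth constant degrades like $\langle\eta\rangle^{1/2}$ there. For data supported away from such times, or concentrated at a single Fourier mode, one recovers the sharper $\langle t\rangle^{1/2}$ growth without loss; this is the mechanism behind the explicit arbitrarily small perturbations of Proposition \ref{prop:lwdensity}, hence behind the genericity in \eqref{bd:lwcomp}.
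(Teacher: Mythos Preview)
Your proposal is correct and follows essentially the same route as the paper's proof: both establish the pointwise identity $\frac{1}{M^2}|\hR|^2+|\widehat{Q[\vv]}|^2=\sqrt{p}\,|Z|^2$, invoke the lower bound of Lemma~\ref{keylemma} on $\Phi_L(t,0)$ applied to $\Gamma(t)$ via Duhamel, and then reduce to the symbol estimate $\sqrt{p}\gtrsim\langle t\rangle/\langle\eta\rangle$. The only cosmetic difference is that the paper obtains this last inequality in one line from $\sqrt{p}\ge\langle\eta-kt\rangle$ together with the Peetre-type bound $\langle\eta-kt\rangle\langle\eta\rangle\gtrsim\langle kt\rangle\ge\langle t\rangle$, whereas you split into the cases $t\le2|\eta|$ and $t>2|\eta|$; the content is identical.
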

	Clearly, looking at \eqref{def:GammaZXi}, if $\Xi^{in}=0$, namely $\rho^{in}=-\omega^{in}$, we immediately have a growth in time for non trivial initial conditions. When $\Xi^{in}\neq 0$, it may happen that the right-hand side of the inequality above become zero for some $t$. For this reason, in the following proposition we show that the set of initial data for which the right-hand side of the bound in Theorem \ref{maintheoremlwz} vanishes at some time has empty interior in any Sobolev space in which the initial data are taken.
	
	\begin{proposition}
		\label{prop:lwdensity}
		Let $s_1\in \R$ and $s_2\geq -1/2$. Given $\rho^{in},\omega^{in}\in H^{s_1}_xH^{s_2}_y$ and $\alpha^{in} \in H^{s_1-\frac32}_xH^{s_2-\frac32}_y$, let $
		\Gamma(t,Z^{in},\Xi^{in})$ be defined as in \eqref{def:GammaZXi}, where $Z^{in}$ is defined as in  \eqref{def:hZ} and $\Xi^{in}=\rho^{in}+\omega^{in}$. 
		
		Then, for any $\epsilon>0$ sufficiently small, there exists $\rho_\epsilon^{in},\alpha_\epsilon^{in},\omega_\epsilon^{in}$ such that
		\begin{align}
			\label{bd:closeHs}
			\norma{\rho^{in}-\rho_{\epsilon}^{in}}{ H^{s_1}_xH^{s_2}_y}+\norma{\omega^{in}-\omega_{\epsilon}^{in}}{ H^{s_1}_xH^{s_2}_y}+\norma{\alpha^{in}-\alpha_{\epsilon}^{in}}{H^{s_1-\frac32}_xH^{s_2-\frac32}_y}\leq 2 \epsilon,
		\end{align}
		and, by defining $Z^{in}_\epsilon, \Xi^{in}_\epsilon$ accordingly, the following inequality holds
		\begin{equation}
			\label{bd:densiti}
			\inf_{t\geq 0}\norma{\Gamma(t,Z^{in}_\epsilon,\Xi^{in}_\epsilon)}{L^2_xH^{-1/2}_y}\geq  \frac{\epsilon}{2}.
		\end{equation}
	\end{proposition}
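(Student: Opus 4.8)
The plan is to exploit the fact that $\Gamma(t,Z^{in},\Xi^{in})$ is, at each fixed frequency $(k,\eta)$, an $\mathbb{C}^2$-valued trajectory, and that the obstruction to the lower bound \eqref{bd:densiti} is the vanishing of this trajectory at some time $t$. First I would reduce the problem to a single frequency. By linearity and by the definition \eqref{def:GammaZXi}, at a fixed $(k,\eta)$ with $k\ne 0$ we have $\widehat{\Gamma}(t,k,\eta) = \widehat{Z}^{in}(k,\eta) + \big(\int_0^t \Phi_L(0,s)F(s)\,ds\big)\widehat{\Xi}^{in}(k,\eta)$. If for the given data this vanishes at some time, I would perturb the initial data only in a small neighbourhood of one chosen frequency $(k_*,\eta_*)$ so as to make $\widehat{\Gamma}$ bounded away from zero there; since the perturbation is localized in frequency, its $H^{s_1}_xH^{s_2}_y$ (resp.\ $H^{s_1-3/2}_xH^{s_2-3/2}_y$) norm can be made as small as $\varepsilon$, while the contribution of $\widehat{\Gamma}_\varepsilon$ at that frequency to the $L^2_xH^{-1/2}_y$ norm is at least of order $\varepsilon$.

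The key step is the per-frequency statement: \emph{for any $c_0>0$ there exists $w \in \mathbb{C}^2$ with $|w| \le c_0$ such that $\inf_{t\ge0}\big| \widehat{Z}^{in} + w + \int_0^t \Phi_L(0,s)F(s)\,ds\,\widehat{\Xi}^{in}\big| \ge c_0/4$}, where the added vector $w$ corresponds to a modification $(\delta\widehat{R}^{in},\delta\widehat{A}^{in})$ of the initial density and divergence at that frequency (leaving $\widehat{\Xi}^{in}$, hence $\widehat{\omega}^{in}+\widehat{\rho}^{in}$, unchanged, so that the forcing term $\int_0^t\Phi_L(0,s)F(s)\,ds\,\widehat{\Xi}^{in}$ is unaffected). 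To see this, set $g(t) = \widehat{Z}^{in} + \int_0^t \Phi_L(0,s)F(s)\,ds\,\widehat{\Xi}^{in} \in \mathbb{C}^2 \cong \mathbb{R}^4$, a continuous curve. I would use that $g(t)$ has a limit (or at least stays in a bounded set) as $t\to\infty$: indeed, by Lemma \ref{keylemma}, $\Phi_L(0,s)$ maps bounded sets to bounded sets uniformly, and $|F(s)| = 2k^2 p(s)^{-7/4} \lesssim \langle s\rangle^{-7/2}$ is integrable, so $\int_0^\infty \Phi_L(0,s)F(s)\,ds$ converges absolutely and $g$ is a continuous curve with a limit, hence its image is a compact subset $K$ of $\mathbb{R}^4$. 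Since $K$ has empty interior in $\mathbb{R}^4$ (it is the continuous image of $[0,\infty]$, a one-dimensional set, so has Hausdorff dimension $\le 1 < 4$), for any $c_0>0$ the ball $B(0,c_0)\subset\mathbb{R}^4$ is not contained in $-K$; picking $w\in B(0,c_0)$ with $\mathrm{dist}(w,-K)\ge$ some positive number, and then shrinking to ensure this distance is $\ge c_0/4$, gives $\inf_t |g(t)+w| = \mathrm{dist}(-w,K) \ge c_0/4$.

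Then I would assemble the global perturbation: fix one frequency $(k_*,\eta_*)$ with $k_*\ne0$, apply the per-frequency construction with $c_0 = c\,\varepsilon$ for a suitable dimensional constant $c$ (chosen so that, after translating $w$ back through the weights in \eqref{def:hZ} into $(\delta\widehat\rho^{in},\delta\widehat\alpha^{in})$ supported in a unit-size frequency cube around $(k_*,\eta_*)$, the resulting $H^{s_1}_xH^{s_2}_y$ and $H^{s_1-3/2}_xH^{s_2-3/2}_y$ norms of $\delta\rho^{in},\delta\alpha^{in},\delta\omega^{in}$ are $\le 2\varepsilon$; note $\delta\omega^{in}$ is chosen to cancel $\delta\rho^{in}$ in $\Xi^{in}$ if one wants to keep $\Xi^{in}$ fixed, or one simply also controls its norm directly). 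Define $\rho^{in}_\varepsilon = \rho^{in}+\delta\rho^{in}$, etc.; \eqref{bd:closeHs} holds by construction, and for the single frequency $(k_*,\eta_*)$ we have $|\widehat\Gamma_\varepsilon(t,k_*,\eta_*)| \ge c_0/4$ uniformly in $t$, hence $\|\Gamma(t,Z^{in}_\varepsilon,\Xi^{in}_\varepsilon)\|_{L^2_xH^{-1/2}_y}^2 \ge \langle\eta_*\rangle^{-1}|\widehat\Gamma_\varepsilon(t,k_*,\eta_*)|^2 \gtrsim \varepsilon^2$, which after adjusting the constant $c$ gives \eqref{bd:densiti}.

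The main obstacle is the boundedness/compactness of the curve $g(t)$, i.e.\ controlling the behaviour of $\Phi_L(0,s)$ as $s\to\infty$: Lemma \ref{keylemma} gives two-sided bounds $c_2|Z^{in}|\le|\Phi_L(t,0)Z^{in}|\le C_2|Z^{in}|$, which transfer to $\Phi_L(0,s)$ by the group property, so the forcing integrand is dominated by $C_2\,|F(s)|\,|\widehat\Xi^{in}|$, and $\int_0^\infty |F(s)|\,ds < \infty$ since $F$ decays like $\langle s\rangle^{-7/2}$; this is the step where one must be careful that the decay of $F$ beats the (uniformly bounded, but not decaying) operator norm of $\Phi_L$. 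Everything else — the dimension-counting argument that a curve cannot fill a ball in $\mathbb{R}^4$, and the bookkeeping translating a frequency-localized $\mathbb{C}^2$ perturbation into Sobolev norms via the weights $M^{-1}p^{-1/4}$ and $p^{-3/4}$ — is elementary.
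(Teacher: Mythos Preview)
Your overall strategy---reduce to a frequency-by-frequency picture, observe that $t\mapsto\widehat\Gamma(t,k,\eta)$ traces a compact curve in $\mathbb{C}^2$ because $\int_0^\infty|\Phi_L(0,s)F(s)|\,ds<\infty$ (via Lemma~\ref{keylemma} and the $\langle s\rangle^{-7/2}$ decay of $F$), and then shift $\widehat Z^{in}$ off that curve---is exactly the paper's. Two steps, however, are not justified as written.

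First, the quantitative claim ``shrinking to ensure this distance is $\ge c_0/4$'' does not follow from the Hausdorff-dimension argument alone. Knowing that $K\subset\mathbb{R}^4$ has dimension $\le1$ only gives $B(0,c_0)\setminus(-K)\ne\emptyset$; for \emph{small} $c_0$ a finite-length curve can easily be $c_0/4$-dense in $B(0,c_0)$ (the tube $N_{c_0/4}(-K)$ has volume $\sim Lc_0^3$, which dominates $|B(0,c_0)|\sim c_0^4$ as $c_0\to0$). What actually rescues the bound is the $C^1$ structure: since $g'(t)=\Phi_L(0,t)F(t)\widehat\Xi^{in}$ never vanishes when $\widehat\Xi^{in}\ne0$, the zeros of $g$ are isolated and the curve is locally a line segment near each. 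The paper uses precisely this: it first shows $\Gamma$ can vanish only at finitely many times $t_i$ with $\partial_t\Gamma(t_i)\ne0$, then picks a unit vector $\nu_\epsilon$ not parallel to any of the tangents $\partial_t\Gamma(t_i)$, so that the shift $\epsilon e^{-(k^2+\eta^2)}\nu_\epsilon$ keeps $|\Gamma^\epsilon|\ge\tfrac12\epsilon e^{-(k^2+\eta^2)}$ for all $t$. (If $\Gamma^\infty=0$ a preliminary perturbation of $\widehat\alpha^{in}$ reduces to the case $\Gamma^\infty\ne0$.) Your dimension-counting idea can be made quantitative, but only after feeding in this local information.

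Second, the passage from the pointwise bound at $(k_*,\eta_*)$ to $\|\Gamma_\epsilon(t)\|_{L^2_xH^{-1/2}_y}\ge\epsilon/2$ is incorrect as stated: $\eta$ is a continuous variable, so the single value $|\widehat\Gamma_\epsilon(t,k_*,\eta_*)|^2$ contributes nothing to $\int\langle\eta\rangle^{-1}|\widehat\Gamma_\epsilon(t,k_*,\eta)|^2\,d\eta$. You need a lower bound on $|\widehat\Gamma_\epsilon(t,k_*,\eta)|$ over a positive-measure set of $\eta$, uniform in $t$; if you perturb on a cube $|\eta-\eta_*|<1/2$ by a single vector $w$, you must control $\inf_t|g_\eta(t)+w|$ for a.e.\ $\eta$ there, and the curves $g_\eta$ genuinely depend on $\eta$. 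The paper sidesteps this by carrying out the construction at \emph{every} $(k,\eta)$ simultaneously, weighting the perturbation by $e^{-(k^2+\eta^2)}$ so that the Sobolev norms of $(\delta\rho,\delta\alpha,\delta\omega)$ remain $O(\epsilon)$ while the pointwise lower bound integrates to give \eqref{bd:densiti}.
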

	\begin{remark}
		In the proof of Proposition \ref{prop:lwdensity}, given at the end of this section, we construct the perturbation $(\rho_\epsilon^{in},\alpha_\epsilon^{in},\omega_\epsilon^{in})$  at any fixed frequency $k,\eta$, satisfying a non-degeneracy condition related to \eqref{bd:densiti}.
	\end{remark}
	In the following, we prove Theorem \ref{maintheorem} and Theorem \ref{maintheoremlwz}.
	
	\begin{proof}[Proof of Theorem \ref{maintheorem}]
		We first prove the bounds for the solenoidal component of the velocity field, namely \eqref{inq:P1} and \eqref{inq:P2}. By \eqref{eq:decOmega}, we have 
		\begin{equation*}
			|\hOmega|(t,k,\eta)\leq|\hR|(t,k,\eta)+|\hXi^{in}|(k,\eta).
		\end{equation*}
		Then, thanks to the Biot-Savart law, we prove \eqref{inq:P1} as follows,
		\begin{align*}
			\norma{P[\vv]^x(t)}{L^2}^2&=\norma{(\dy \Delta^{-1}\omega)(t)}{L^2}^2\\
			&=\sum_k\int \frac{(\eta-kt)^2}{\p^{2}}|\hOmega(t)|^2d\eta\\
			&\lesssim \sum_k\int \M^2\frac{(\eta-kt)^2}{\p^{3/2}}\left|\frac{\hR(t)}{\M\p^{1/4}}\right|^2+\frac{(\eta-kt)^2}{\p^2}|\hXi^{in}|^2d\eta\\
			&\lesssim \sum_k\int \frac{\M^2}{\sqrt{\p}}(|Z^{in}|^2+|\hXi^{in}|^2)+\frac{1}{\p}|\hXi^{in}|^2d\eta.
		\end{align*}
		Now since $\jap{\eta/k-t}\langle \eta/k \rangle \gtrsim \langle t \rangle$ observe that 
		\begin{equation*}
			\frac{1}{\sqrt{p}}=\frac{1}{|k|\jap{\eta/k-t}}\frac{\jap{\eta/k}}{\jap{\eta/k}}\lesssim \frac{1}{\jap{t}}\frac{\jap{\eta}}{\jap{k}}.
		\end{equation*}
		Hence, recalling the definition of $Z^{in}$, see \eqref{def:Zin}, we infer  
		\begin{align*}
			\norma{P[\vv]^x(t)}{L^2}^2\lesssim&\frac{\M^2}{\langle t\rangle}\bigg(\norma{ \frac{\rho^{in}}{\M}}{H^{-\frac12}_xL^2_y}^2+\norma{\alpha^{in}}{H^{-\frac12}_xH^{-1}_y}^2 +\norma{ \rho^{in}+\omega^{in}}{H^{-\frac12}_xH^{\frac12}_y}^2\bigg)\\
			&+\frac{1}{\langle t\rangle^2}\norma{\rho^{in}+\omega^{in}}{H^{-1}_xH^1_y}^2,
		\end{align*} 
		Similarly for $P[\vv]^y$, we prove \eqref{inq:P2} as follows 
		\begin{align*}
			\norma{P[\vv]^y(t)}{L^2}^2=&\norma{\dx \Delta^{-1}\omega}{L^2}^2 \\
			\lesssim& \sum_k\int \M^2\frac{k^2}{\p^{3/2}}\left|\frac{\hR(t)}{\M \p^{1/4}}\right|^2+\frac{k^2}{\p^2}|\hXi^{in}|^2d\eta\\
			\lesssim&\frac{\M^2}{\langle t\rangle^3}\bigg(\norma{ \frac{\rho^{in}}{\M}}{H^{-\frac12}_xH^{1}_y}^2+\norma{\alpha^{in}}{H^{-\frac12}_xL^2_y}^2+\norma{ \rho^{in}+\omega^{in}}{H^{-\frac12}_xH^{\frac{3}{2}}_y}^2\bigg)\\
			&+\frac{1}{\langle t\rangle^4}\norma{\rho^{in}+\omega^{in}}{H^{-1}_xH^2_y}^2.
		\end{align*}
		To prove  \eqref{inq:energybound} first of all observe that, thanks to Lemma \ref{keylemma}, we have 
		\begin{equation}
			\label{inq:PhiL1}
			\begin{split}
				\int_0^\infty |\Phi_L(t,s)F(s)|ds&\lesssim \int_0^\infty|F(s)|ds\\
				&\lesssim \frac{1}{|k|^{\frac32}}\int_{0}^{\infty}\frac{ds }{(1+(\eta/k-s)^2)^{\frac74}}\lesssim \frac{1}{|k|^{\frac32}}.
			\end{split}
		\end{equation}
		Hence, by recalling the definition of $Z$, see \eqref{eq:solZ1}, combining Lemma \ref{keylemma} with \eqref{inq:PhiL1} we infer  
		\begin{equation}
			\label{bd:Zhat}
			|\widehat{Z}(t,k,\eta)|\lesssim |Z^{in}(k,\eta)|+|\hXi^{in}(k,\eta)| \quad \text{for any $t\geq0$}.
		\end{equation}
		Then, by the Helmholtz decomposition we have  
		\begin{align*}
			\norma{Q[\vv](t)}{L^2}^2+\frac{1}{\M^2}\norma{\rho(t)}{L^2}^2=&\norma{(\dx \Delta^{-1}\alpha)(t)}{L^2}^2+\norma{(\dy \Delta^{-1}\alpha)(t)}{L^2}^2+\frac{1}{\M^2}\norma{\rho(t)}{L^2}^2\\
			=&\sum_k\int \frac{|\halpha(t)|^2(t,k,\eta)}{k^2+\eta^2}+\frac{1}{\M^2}|\hrho(t)|^2(t,k,\eta)d\eta\\
			=&\sum_k\int \frac{|\hA(t)|^2}{\p}(t,k,\eta)+\frac{1}{\M^2}|\hR(t)|^2(t,k,\eta)d\eta.
		\end{align*}
		Therefore, by \eqref{bd:Zhat} and the fact that $p\leq\jap{t}^2\jap{k,\eta}^2$, we get
		\begin{align}
			\label{ideproofQ}\norma{Q[\vv](t)}{L^2}^2+\frac{1}{M^2}\norma{\rho(t)}{L^2}^2= \ & \sum_k\int \sqrt{\p}\bigg(\bigg|\frac{\hA(t)}{\p^{3/4}}\bigg|^2+\bigg|\frac{\hR(t)}{\M \p^{1/4}}\bigg|^2\bigg)d\eta\\
			\notag=& \sum_k \int \sqrt{p}|Z(t)|^2d\eta\\
			\lesssim& \jap{t}(\norm{Z^{in}}_{H^1}^2+\norm{\rho^{in}+\omega^{in}}_{H^1}^2)\\
			\notag=&	\jap{t}\left(\norm{\frac{\rho^{in}}{M}}_{L^2}^2+\norm{\alpha^{in}}^2_{H^{-1}}+\norm{\rho^{in}+\omega^{in}}_{H^1}^2\right),
		\end{align}
		hence concluding the proof of Theorem \ref{maintheorem}.
	\end{proof}
	We now prove Theorem \ref{maintheoremlwz}.
	\begin{proof}[Proof of Theorem \ref{maintheoremlwz}]
		Recall that the solution of \eqref{eq:dtZ} is given by the Duhamel's formula as 
		\begin{equation}
			\label{eq:solZ1}
			Z(t)=\Phi_L(t,0)\bigg(Z^{in}+\int_{0}^{t}\Phi_L(0,s)F(s)\hXi^{in}ds\bigg)=\Phi_L(t,0)\Gamma(t,Z^{in},\Xi^{in}),
		\end{equation}	     	
		where we have also used the definition of $\Gamma$ given in \eqref{def:GammaZXi}.
		By Lemma \ref{keylemma} we have 
		\begin{equation}
			\label{inq:boundZ}
			|Z(t)|\geq c|\Gamma(t,Z^{in},\Xi^{in})|,
		\end{equation}
		hence, combining the bound above with the identity in \eqref{ideproofQ} we get  
		\begin{align*}
			\norma{Q[\vv](t)}{L^2}^2+\frac{1}{\M^2}\norma{\rho(t)}{L^2}^2\gtrsim& \sum_k\int \sqrt{\p}|\Gamma(t,Z^{in},\Xi^{in})|^2 d\eta\\
			\gtrsim& \sum_k \int \langle \eta-kt \rangle |\Gamma(t,Z^{in},\Xi^{in})|^2d\eta \\
			\gtrsim&  \langle t\rangle\sum_k\int \frac{1}{\langle \eta \rangle}|\Gamma(t,Z^{in},\Xi^{in})|^2d\eta,
		\end{align*}
		where in the last two lines we have used that $\sqrt{p}\geq \jap{\eta-kt}$ and $\jap{\eta-kt}\jap{\eta}\gtrsim \jap{kt}$. Therefore we have  
		\begin{align*}
			\norma{Q[\vv](t)}{L^2}^2+\frac{1}{\M^2}\norma{\rho(t)}{L^2}^2   \gtrsim&\langle t \rangle \norma{\Gamma(t,Z^{in},\Xi^{in})}{L^2_xH^{-1/2}_y}^2
		\end{align*}
		and the proof is over.
	\end{proof}	
	
	Finally, we present the proof of the Proposition \ref{prop:lwdensity}.

	\begin{proof}[Proof of Proposition \ref{prop:lwdensity}]
		\label{sec:propZin}
		With a slight abuse of notations, from the definition \eqref{def:GammaZXi} we have  
		\begin{equation}
			\label{eq:Gamm}
			\widehat{\Gamma}(t,k,\eta)=\widehat{Z}^{in}(k,\eta)+\int_0^t\Phi_L(0,s)F(s,k,\eta)\widehat{\Xi}^{in}(k,\eta)ds.
		\end{equation}
		Now, let us fix the frequencies $k,\eta$. In this way, $t\mapsto \widehat{\Gamma}(t)$ is a regular curve in $\mathbb{C}^2$. We now want to construct a perturbation of the initial data. If $\widehat{\Xi}^{in}(k,\eta)=0$ there is nothing to prove. So we assume that $\widehat{\Xi}^{in}(k,\eta)\neq0$.
		
		First of all, by a computation similar to \eqref{inq:PhiL1}, we know that $\lim_{t\to \infty} \Gamma(t,k,\eta)=\Gamma^{\infty}(k,\eta)$. Let us first consider the case $\Gamma^\infty\neq 0$.
		
		We claim that in this case $\Gamma(t,k,\eta)$ vanishes at most in a finite number of times $t_i$ for $i=0,\dots, n$. 
		
		Indeed, since $|\Gamma^{\infty}|>0$ and the integral in \eqref{eq:Gamm} is converging, see \eqref{inq:PhiL1}, there is a $T(\Gamma^\infty,k,\eta)>0$ such that
		\begin{equation}
			\label{bd:Gammainfty}
			|\Gamma(t,k,\eta)|>\frac12|\Gamma^{\infty}(k,\eta)| \quad \text{for $t\geq T(\Gamma^\infty,k,\eta)$}.
		\end{equation}
		Hence, we know that $\Gamma$ may vanish only for $t\in[0,T(\Gamma^\infty,k,\eta)]$. Then, by \eqref{eq:Gamm} and the bound \eqref{bd:upplowkey} in Lemma \ref{keylemma}, we have 
		\begin{align}
			|\dt \Gamma(t,k,\eta)|=&|\Phi_L(0,t)F(t,k,\eta)\widehat{\Xi}^{in}(k,\eta)|\geq C|F(t,k,\eta)\widehat{\Xi}^{in}(k,\eta)|\\
			\geq &C(T,k,\eta)|\widehat{\Xi}^{in}(k,\eta)|,
		\end{align}
		where, from the definition of $F$ given in \eqref{def:LF}, we define 
		\begin{equation*}
			C(T,k,\eta)=C\min_{t\in[0,T]}|F(t,k,\eta)|=2C\left(|k|^{\frac32}\max_{t\in[0,T]}\jap{\eta/k-t}^{\frac72}\right)^{-1}>0,
		\end{equation*}
		and the last inequality follows since $(k,\eta)$ are fixed, $|k|\geq 1$ and $T<+\infty$. 
		Consequently, since in the compact set $[0,T(\Gamma^{\infty})]$ there are no points $t^*$ such that $\Gamma(t^*)=\dt \Gamma(t^*)=0$, exploiting also the continuity of $\dt \Gamma$, we have that $\Gamma$ vanishes at most in a finite number of times in the interval $[0,T(\Gamma^\infty)]$.\\
		Now we can construct the perturbation of the initial data. Consider the set $\{\dt \Gamma(t_i,k,\eta)\}$ for $i=0,\dots,n$. In view of the continuity of $\dt \Gamma$, there is an $\epsilon<\min\{|\Gamma^\infty|/2,1\}$ and at least one unit vector $\nu_\epsilon(k,\eta)=(\nu_{\epsilon}^1(k,\eta),\nu_{\epsilon}^2(k,\eta))$ which is not parallel to any $\dt \Gamma(t_i,k,\eta)$ such that 
		\begin{equation}
			\label{bd:Gamma}
			|\Gamma(t,k,\eta)+\epsilon e^{-(k^2+\eta^2)}\nu_\epsilon(k,\eta)|>\frac12\epsilon e^{-(k^2+\eta^2)}.
		\end{equation}
		By choosing 
		\begin{align}
			\widehat{\alpha}_\epsilon^{in}(k,\eta)&=\widehat\alpha^{in}(k,\eta)+\epsilon(k^2+\eta^2)^{\frac34} e^{-(k^2+\eta^2)}\nu_{\epsilon}^2(k,\eta),\\
			\widehat\rho_\epsilon^{in}(k,\eta)&=\widehat\rho^{in}(k,\eta )+\epsilon M(k^2+\eta^2)^{\frac14}e^{-(k^2+\eta^2)}\nu_{\epsilon}^1(k,\eta),\\
			\widehat\omega_\epsilon^{in}(k,\eta)&=\widehat\omega^{in}(k,\eta )-\epsilon M(k^2+\eta^2)^{\frac14}e^{-(k^2+\eta^2)}\nu_{\epsilon}^1(k,\eta),
		\end{align}
		we clearly satisfy \eqref{bd:closeHs}. In addition, we have 
		\begin{align}
			\widehat Z_\epsilon^{in}(k,\eta)=\widehat Z^{in}(k,\eta)+\epsilon e^{-(k^2+\eta^2)}\nu_\epsilon(k,\eta), \qquad \widehat \Xi^{in}_\epsilon(k,\eta)=\widehat \Xi^{in}(k,\eta).
		\end{align}
		Consequently
		\begin{align*}
			\widehat{\Gamma}^\epsilon(t,k,\eta)&=\widehat{Z}_\epsilon^{in}(k,\eta)+\int_0^t\Phi_L(0,s)F(s,k,\eta)\widehat{\Xi}_\epsilon^{in}(k,\eta)ds\\
			&=\Gamma(t,k,\eta)+\epsilon e^{-(k^2+\eta^2)}\nu_\epsilon(k,\eta).
		\end{align*}
		By combining \eqref{bd:Gammainfty} with \eqref{bd:Gamma} we get that 
		\begin{equation}
			\label{bd:Gammaeps}
			|\Gamma^\epsilon(t,k,\eta)|\geq \frac12\min \left(|\Gamma^\infty(k,\eta)|,\epsilon e^{-(k^2+\eta^2)}\right) \qquad \text{for any $t>0$.}
		\end{equation}
		Let us now turn to the case $\Gamma^\infty(k,\eta)=0$. First we choose
		\begin{equation}
			\alpha^{in}_1=\alpha^{in}+\epsilon(k^2+\eta^2)^{\frac34}e^{-(k^2+\eta^2)},
		\end{equation} 
		so that for the corresponding $\Gamma^1$ we get $|\Gamma^{1,\infty}(k,\eta)|=\epsilon e^{-(k^2+\eta^2)}$. At this point, we can repeat the previous argument.
		
		Resuming, by using Plancherel's Theorem, from the bound \eqref{bd:Gammaeps} we obtain \eqref{bd:densiti}, hence the proof is over.  
	\end{proof}
	\section{The viscous case}\label{sec:visccompcouette}
	
	In this section, we study the system \eqref{eq:contcouetteint}-\eqref{eq:vorticitycouetteint} in presence of viscosity, namely we assume that $\nu>0$ and $\lambda\geq 0$. As done in the previous section, we will prove the results only in the case $\rho_0^{in}=\alpha_0^{in}=\omega_0^{in}=0$, since the dynamics of the zero mode decouples with respect to fluctuations around it, see Section \ref{sec:zeromode}. 
	
	To remove the transport terms, we again consider the change of coordinates \eqref{def:movframe} and we use the notation introduced in \eqref{def:DeltaL}-\eqref{def:p'}. Defining $\mu=\nu+\lambda$, by taking the Fourier transform of the system \eqref{eq:contcouetteint}-\eqref{eq:vorticitycouetteint} in the moving frame, we have
	\begin{align}
		\label{eq:NShR}
		&\dt \hR=-\hA,\\
		\label{eq:NShA}&\dt \hA=\frac{\dt p}{p}\hA-\mu p \hA+\frac{1}{M^2}p\hR-\frac{2k^2}{p}\hOmega,\\
		\label{eq:NShO}&\dt \hOmega=\hA-\nu p\hOmega.
	\end{align}
	In the inviscid case, the conservation of $\Xi=R+\Omega$ was crucial in order to have a closed system in terms of $R,A$, which allow us to deal with a $2\times 2$ non-autonomous system of ODE's in the Fourier space. Also in the viscous case it turns out that it is convenient to replace $\Omega$ with another auxiliary variable, however, the conservation of $\Xi$ no longer hold since 
	\begin{equation}
		\label{eq:dtXiphys}
		\dt \hXi=-\nu p\hXi+\nu p \hR.
	\end{equation}
	In addition, the last term in the right-hand side of \eqref{eq:dtXiphys} may be a problem to perform energy estimates. Indeed, it is not possible to directly control $\nu p \hR$ in a straightforward energy estimate since we do not have a similar dissipative term for $R$. 
	\subsubsection*{The good unknown} To overcome this difficulty, we observe that 
	\begin{equation}
		\label{eq:dtXinuAphys1}
		\begin{split}
			\dt (\hXi-\nu M^2\hA)=&\ -\nu p(\hXi-\nu M^2\hA)+\nu(\mu-\nu)M^2p \hA\\
			&-\nu M^2\frac{\dt p}{p}\hA+2\nu M^2\frac{k^2}{p}\left(\hXi-\nu M^2\hA\right)\\
			&-2\nu M^2\frac{k^2}{p}\hR+2\nu^2 M^4\frac{k^2}{p}\hA,
		\end{split}
	\end{equation}
	where we have also used $\Omega=\Xi-R$. Although the equation \eqref{eq:dtXinuAphys1} looks more complicated with respect to \eqref{eq:dtXiphys}, it has a better structure to make use of $\Xi-\nu M^2A$ as an auxiliary variable in an energy estimate. This because the first term in the right-hand side of \eqref{eq:dtXinuAphys1} give us dissipation, the second term scales as the available dissipation for $A$ (notice that if $\lambda=0$ this term does not appear) and the remaining ones are lower order.  
	\begin{remark}
		One of the main difficulties to obtain an enhanced dissipation estimate is the absence of a diffusive term in the continuity equation, because otherwise it would have been sufficient to combine the equation \eqref{eq:dtXiphys} with an adaptation of the energy estimates given in the inviscid case. Instead, we need to take advantage of the underlying wave structure in the system. More precisely, we will be able to exploit the coupling between the density and the divergence to gain a dissipative term for the density. A similar strategy, inspired by the classical paper of Matsumura and Nishida \cite{matsumura1983initial}, has been already exploited by Guo and Wang \cite{guo2012decay} to prove decay time rates for the compressible Navier-Stokes equations with smooth and small initial data.
	\end{remark}
\begin{remark}[The $\nu=0$ case]
	\label{rem:nu0}
	When $\nu=0$ we have $\dt\Xi=0$ as in the inviscid case. This implies that we can replace $\Omega$ with $\Xi^{in}-R$ in \eqref{eq:NShA} so that the system is closed in terms of $R$ and $A$. Then the system to study is much simpler but the exponential decay is not valid in general. Indeed, if $\Xi^{in}\neq0 $ the forcing term appearing in the equation for the divergence give us the convergence towards an asymptotic state which is different, in general, from zero. If $\Xi^{in}=0$ we again have the exponential decay as in Theorem \ref{th:NScouetteintro} with $\lambda$ replacing $\nu$ in the bounds. We do not detail this case since it can be deduced with the same method that we are going to present in the general scenario.
\end{remark}
	This section is organized as follows. In Subsection \ref{subsec:thNS1} we prove Theorem \ref{th:NScouetteintro} whereas in Subsection \ref{subsec:thNS2} we prove Theorem \ref{th:NSnoloss}.
	
	Throughout all this section we make use of the following notation 
	\begin{equation}
		\label{def:Cin}
		C_{in,s}=\frac{1}{M}\norm{\rho^{in}}_{H^{s+1}}+\norm{\alpha^{in}}_{H^{s}}+\norm{\Xi^{in}-\nu M^2\alpha^{in}}_{H^{s}}.
	\end{equation}
	\subsection{Combining the dissipation enhancement with the inviscid mechanism}
	\label{subsec:thNS1}
	In this subsection, we prove Theorem \ref{th:NScouetteintro}, which combines the dissipation enhancement generated by the presence of the background shear flow with the inviscid dynamics. More precisely, our goal is to obtain estimates such that in the limit $\nu \to0$ we  recover the bounds of the inviscid case. However, as previously discussed, when viscosity is present there is the loss of a conservation law, meaning that we will need to consider a $3\times 3$ system in the Fourier space.  
	
	The key estimate which allow us to prove Theorem \ref{th:NScouetteintro} is given in the following proposition.
	\begin{proposition}
		\label{th:enhancedcomp}
		Let $s\geq 0$, $0<\mu \leq 1/2, \ M>0$ be such that $M\leq \min \{\mu^{-\frac12},\nu^{-\frac13}\}$. If $\rho^{in}\in H^{s+1}(\mathbb{T}\times \mathbb{R})$ and $\alpha^{in},\omega^{in}\in H^{s}(\mathbb{T}\times \mathbb{R})$ then
		\begin{equation}
			\label{bd:RA}
			\begin{split}
				\frac{1}{M}\norm{(p^{-\frac14}{\hR})(t)}_{H^s}&+\norm{(p^{-\frac34}{\hA})(t)}_{H^s}\\
				&+\norm{(p^{-\frac34}(\hXi-\nu M^2\hA))(t)}_{H^s}\lesssim e^{-\frac{1}{32}\nu^\frac13 t}C_{in,s},
			\end{split}
		\end{equation}
		where $C_{in,s}$ is defined in \eqref{def:Cin}.
	\end{proposition}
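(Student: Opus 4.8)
The plan is to establish the proposition frequency by frequency and then to prove the per-frequency decay by a weighted energy method that simultaneously encodes the inviscid oscillatory structure of Lemma~\ref{keylemma} and the dissipation enhancement. As in Remark~\ref{rem:nozero} we may assume $\rho_0^{in}=\alpha_0^{in}=\omega_0^{in}=0$; passing to the moving frame \eqref{def:movframe} and taking the Fourier transform gives the $3\times3$ system \eqref{eq:NShR}--\eqref{eq:NShO} at each fixed $(k,\eta)$ with $k\neq0$. Following the paper's own hint I would replace $\hOmega$ by the good unknown $\hXi-\nu M^2\hA$, governed by \eqref{eq:dtXinuAphys1}, and then introduce the rescaled variables $Z_1=\hR/(Mp^{1/4})$, $Z_2=\hA/p^{3/4}$, $Z_3=(\hXi-\nu M^2\hA)/p^{3/4}$, extending \eqref{def:hZ} by a third component carrying the same $p^{3/4}$ weight. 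With the coefficients $a,b,d$ of \eqref{def:abd}, the resulting system has the trace-free principal $2\times2$ block acting on $(Z_1,Z_2)$ exactly as in the proof of Lemma~\ref{keylemma}, plus the genuine dissipative terms $-\mu p\,Z_2$ and $-\nu p\,Z_3$, plus a collection of coupling terms (the $k^2/p$ contributions from $-2\dXX\Delta_L^{-1}$, and the terms $\lambda\nu M^2 p\hA$, $\nu M^2(\dt p/p)\hA$, $\nu M^2(k^2/p)(\hXi-\nu M^2\hA)$ and $\nu M^2(k^2/p)\hR$ in \eqref{eq:dtXinuAphys1}) which, thanks to $M\mu^{1/2}\le1$ and $M\nu^{1/3}\le1$, are lower order in the sense that their contribution to the energy balance has bounded time integral (the $\lambda$-term being absorbed by a balanced splitting between the $Z_2$- and $Z_3$-dissipations).

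The core of the argument is the construction of a weighted energy functional of the form
\begin{align*}
\mathcal{E}(t)=\jap{k,\eta}^{2s}\Big(&\zeta|Z_1|^2+\zeta^{-1}|Z_2|^2+2\tfrac{a}{\beta}\Re(Z_1\bar Z_2)\\
&+\kappa|Z_3|^2+\kappa'\Re(Z_2\bar Z_3)-m(t)\Re(Z_1\bar Z_2)\Big),
\end{align*}
with $\zeta=\sqrt{d/b}$, $\beta=\sqrt{bd}$ as in \eqref{def:zetabeta}, with $\kappa,\kappa'$ small (possibly $M$-dependent) constants, and with $m(t)$ a bounded time-dependent Fourier multiplier of Matsumura--Nishida type, roughly $m(t)\approx\min\{1,\mu M\sqrt{p(t)}\}$ up to a small prefactor, whose role is to borrow for the density $\hR$ a dissipation $\simeq m\,d\,|Z_1|^2\simeq\min\{\mu p,\sqrt p/M\}|Z_1|^2$ out of the coupling with $\hA$, since $\hR$ itself carries no dissipation. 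Coercivity $\mathcal{E}(t)\simeq\jap{k,\eta}^{2s}(|Z_1|^2+|Z_2|^2+|Z_3|^2)$ follows from $1\le\zeta^2\le1+2M^2$ and $|a|/\beta\le1/(2\sqrt2)$ (cf. \eqref{bd:zeta}, \eqref{bd:abeta}) once $\kappa,\kappa',\|m\|_\infty$ are taken small enough. Differentiating $\mathcal{E}$ along the flow, using the identities \eqref{enestpart}--\eqref{dtxy} for the inviscid block and Cauchy--Schwarz for the dissipative and lower-order terms, I expect to reach a differential inequality
\begin{equation*}
\Dt\mathcal{E}(t)+c\,\nu p(t)\,\mathcal{E}(t)\le C\Big(\big|\Dt\log\zeta\big|+\big|\Dt(a/\beta)\big|+|\dot m(t)|+\tfrac{\nu M^3 k^2}{p^{3/2}}+\cdots\Big)\mathcal{E}(t),
\end{equation*}
with $c,C$ absolute constants and the right-hand factor integrable in $t$ uniformly in $(k,\eta)$, exactly as in \eqref{bd:gron1}--\eqref{bd:gron2}.

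Grönwall's lemma then yields $\mathcal{E}(t)\lesssim e^{-c\nu\int_0^t p(\tau)\,d\tau}\mathcal{E}(0)$, and since $k\neq0$ one has the enhanced-dissipation bound $\nu\int_0^t p(\tau)\,d\tau=\nu\int_0^t\big(k^2+(\eta-k\tau)^2\big)\,d\tau\ge\tfrac1{12}\nu k^2 t^3\ge\tfrac1{16}\nu^{1/3}t$ for $t\ge2\nu^{-1/3}$, while for $t\le2\nu^{-1/3}$ the claimed bound reduces to uniform boundedness of $\mathcal{E}$ (which follows from the same differential inequality); combining the two ranges gives $\mathcal{E}(t)\lesssim e^{-\frac1{32}\nu^{1/3}t}\mathcal{E}(0)$. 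Undoing the rescaling, summing $\jap{k,\eta}^{2s}(|Z_1|^2+|Z_2|^2+|Z_3|^2)$ over $k\neq0$ and integrating in $\eta$, and bounding the initial energy by $C_{in,s}^2$ (the extra derivative on $\rho^{in}$ in \eqref{def:Cin} absorbing the $p^{1/4}$ weight mismatch at $t=0$), yields \eqref{bd:RA}.

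The main obstacle, on which I would spend most effort, is the simultaneous calibration of the multiplier $m$ and of the correctors $\kappa,\kappa'$: one needs the term $m\,d\,|Z_1|^2$ to be genuinely negative and large enough that its time integral dominates $\nu^{1/3}t$ (which is what forces $m\simeq\mu M\sqrt p$ where $\mu M\sqrt p\lesssim1$ and $m\simeq1$ otherwise), while the reciprocal ``bad'' term $m\,b\,|Z_2|^2\simeq\mu p\,|Z_2|^2$ must be swallowed by the true $Z_2$-dissipation (which forces $\|m\|_\infty$ small and hence $M$-dependent), and at the same time the terms $\nu M^2(\dt p/p)\hA$ and $\nu M^2(k^2/p)\hR$ in \eqref{eq:dtXinuAphys1} must be reabsorbed; here the hypotheses $M\mu^{1/2}\le1$ and $M\nu^{1/3}\le1$ are precisely what turn these into time-integrable perturbations of the energy balance rather than genuine obstructions. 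A secondary but necessary point, exactly as in the proof of Lemma~\ref{keylemma}, is to verify that every time integral entering the Grönwall factor is bounded uniformly in $(k,\eta)$, in particular independently of the (possibly small) distance of the critical time $\eta/k$ from the origin.
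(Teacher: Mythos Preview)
Your overall architecture is right: fix $(k,\eta)$, use the good unknown $\hXi-\nu M^2\hA$, weight by $p^{-1/4}$ and $p^{-3/4}$, and close via a weighted energy with a Matsumura--Nishida cross term. But the specific mechanism you propose for the decay has a genuine gap, and it differs from the paper's in a way that matters.

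The claimed differential inequality $\Dt\mathcal{E}+c\,\nu p(t)\,\mathcal{E}\le(\text{integrable})\,\mathcal{E}$ cannot hold. Your cross term $-m(t)\Re(Z_1\bar Z_2)$ produces for $Z_1$ a damping $\simeq m\,d\simeq \epsilon\min\{\mu p,\sqrt p/M\}$, which for large $p$ is $\epsilon\sqrt p/M$, not $\nu p$; the minimum over the three components therefore is \emph{not} $c\nu p$. Worse, differentiating the cross term also picks up the viscous piece $m\,\mu p\,\Re(\bar Z_1 Z_2)$; any Young splitting of this leaves on the $|Z_1|^2$ side a term $\gtrsim m\mu p$ which, in the regime $\mu M\sqrt p\ge1$ (where $m\equiv\epsilon$), is $\epsilon\mu p$ and cannot be absorbed by your $Z_1$-damping $\epsilon\sqrt p/M$. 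So the multiplier $m(t)\approx\min\{1,\mu M\sqrt p\}$ does not close.

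The paper avoids both obstructions by two choices you are missing. First, the Matsumura--Nishida cross term is $-2\gamma\,p^{-1/2}\Re(\bar Z_1 Z_2)$ with a \emph{constant} $\gamma=M\nu^{1/3}/4$; the $p^{-1/2}$ factor ensures the resulting $Z_1$-damping is $\gamma/M=\nu^{1/3}/4$, a \emph{constant} rate, while the dangerous viscous interaction becomes $\gamma\mu p^{1/2}\Re(\bar Z_1 Z_2)$, split as $\tfrac18\mu p|Z_2|^2+\tfrac{\gamma}{2M}(M\mu)|Z_1|^2$ and absorbed using $M^2\mu\le1$. Second, the paper inserts the bounded multiplier $m(t,k,\eta)=\exp\!\big(2\arctan(\nu^{1/3}(t-\eta/k))\big)$ into all three weighted variables; its key property $\nu p+\dt m/m\ge\nu^{1/3}$ lifts the genuine $\nu p$-dissipation on $Z_2,Z_3$ to a uniform $\nu^{1/3}$ near the critical time $t=\eta/k$, where $\nu p$ alone can be as small as $\nu$. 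With these two ingredients the differential inequality is $\Dt E\le-\tfrac{1}{16}\nu^{1/3}E+(\text{integrable})E$ directly, and Grönwall gives $e^{-\nu^{1/3}t/16}$ without any splitting into $t\lessgtr\nu^{-1/3}$. The loss of one derivative on $\rho^{in}$ (and not more) comes out of the balance between the $p^{-3/4}$ weight on $Z_3$ and the $-\tfrac34(\dt p/p)|Z_3|^2$ term left over in the energy identity, which is integrable only at the cost of a factor $\jap{k,\eta}^{3}$ on $E(0)$.
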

	In accordance with the inviscid case, one should not expect any weight on the auxiliary variable $\hXi-\nu M^2\hA$. However, the weight $p^{-\frac34}$ is introduced for technical reasons, since it helps to control the second term in the right-hand side of \eqref{eq:dtXinuAphys1}, which is not present if  $\lambda \neq 0$. We discuss more about this point in Remark \ref{rem:lambda0}.
	
	To recover a bound on the vorticity one could exploit the fact that $\Omega=(\Xi-\nu M^2A)+\nu M^2A-R$ and use the previous proposition to infer estimates on $P[\vv]$. This procedure, since $A$ and $\Xi-\nu M^2A$ have slower decay rates with respect to $R$ (formally, think of $p^{-1}$ as $t^{-2}$), would lead to worst decay rates with respect to the one given in Theorem \ref{th:NScouetteintro} for the  solenoidal component of the velocity. In particular, one cannot recover the estimates in the nonviscous and incompressible case by performing the formal limits $\nu \to 0$ and $M\to 0$ respectively.
	
	Instead, by solving the equation for $\Xi$, see \eqref{eq:dtXiphys}, via Duhamel's formula, from Proposition \ref{th:enhancedcomp} we infer the following.
	\begin{corollary}
		\label{cor:vorticity}
		Let $s\geq 0$, $\mu \leq 1/2, \ M>0$ be such that $M\leq \min \{\mu^{-\frac12},\nu^{-\frac13}\}$. If $\rho^{in}\in H^{s+\frac72}(\mathbb{T}\times \mathbb{R})$ and $\alpha^{in},\omega^{in}\in H^{s+\frac52}(\mathbb{T}\times \mathbb{R})$ then
		\begin{equation}
			\label{bd:Omega}
			\begin{split}
				\norm{\Omega(t)}_{H^s}\lesssim& \  M\langle t \rangle^{\frac12}e^{-\frac{1}{32}\nu^{\frac13}t}C_{in,s+\frac12}+M\jap{t}^\frac12e^{-\frac{1}{64}\nu^{\frac13}t}C_{in,s+\frac52}\\
				&+e^{-\frac{1}{12}\nu^{\frac13}t}\norm{\omega^{in}+\rho^{in}}_{H^s}
			\end{split}
		\end{equation}
	\end{corollary}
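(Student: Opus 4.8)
The plan is to deduce \eqref{bd:Omega} from Proposition \ref{th:enhancedcomp} by using the identity $\Omega = \Xi - R$ together with the Duhamel representation of the scalar equation \eqref{eq:dtXiphys} for $\hXi$. Working in the moving frame and on the Fourier side (so that at $t=0$ one has $\Xi^{in}=\omega^{in}+\rho^{in}$, and the $k=0$ mode has already been removed, so $|k|\geq 1$ throughout), integrating \eqref{eq:dtXiphys} with the integrating factor $e^{\nu\int_0^t \p(\tau)d\tau}$ gives
\begin{equation*}
	\hXi(t) = e^{-\nu\int_0^t \p(\tau)\,d\tau}\,\hXi^{in} + \nu\int_0^t e^{-\nu\int_s^t \p(\tau)\,d\tau}\,\p(s)\,\hR(s)\,ds .
\end{equation*}
Since $\hOmega(t)=\hXi(t)-\hR(t)$, the bound splits into three contributions: the term $\hR(t)$, the homogeneous term, and the Duhamel cross term.

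For $\hR(t)$ I would use $\p^{\frac14}(t,k,\eta)\lesssim \jap{t}^{\frac12}\jap{k,\eta}^{\frac12}$ to write $\norm{R(t)}_{H^s}\lesssim \jap{t}^{\frac12}\norm{(\p^{-\frac14}\hR)(t)}_{H^{s+\frac12}}$ and then invoke \eqref{bd:RA} at regularity $s+\tfrac12$; this produces the first term $M\jap{t}^{\frac12}e^{-\frac{1}{32}\nu^{\frac13}t}C_{in,s+\frac12}$. For the homogeneous term, the elementary estimate $\int_0^t \p(\tau,k,\eta)\,d\tau\geq \tfrac{k^2t^3}{12}\geq \tfrac{t^3}{12}$ (minimum of $\int_I w^2dw$ over intervals of fixed length, using $|k|\geq1$), combined with the dichotomy $t\lessgtr\nu^{-\frac13}$, yields $e^{-\nu\int_0^t \p}\lesssim e^{-\frac{1}{12}\nu^{\frac13}t}$ uniformly in $(k,\eta)$, hence the third term $e^{-\frac{1}{12}\nu^{\frac13}t}\norm{\omega^{in}+\rho^{in}}_{H^s}$.

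The core of the argument is the Duhamel cross term. By Minkowski's integral inequality I would pull the $H^s$-norm inside the time integral; inside, bound $|\nu\p(s)\hR(s)| = \nu\,\p^{\frac54}(s)\,|(\p^{-\frac14}\hR)(s)|\lesssim \nu\jap{s}^{\frac52}\jap{k,\eta}^{\frac52}\,|(\p^{-\frac14}\hR)(s)|$, and use $\int_s^t \p(\tau)\,d\tau\geq\tfrac{(t-s)^3}{12}$ to get $e^{-\nu\int_s^t \p}\lesssim e^{-\frac{1}{12}\nu^{\frac13}(t-s)}$. Applying \eqref{bd:RA} now at regularity $s+\tfrac52$ — which is precisely why the hypotheses demand $\rho^{in}\in H^{s+\frac72}$ and $\alpha^{in},\omega^{in}\in H^{s+\frac52}$ — leaves
\begin{equation*}
	\norma{\nu\int_0^t e^{-\nu\int_s^t \p(\tau)d\tau}\p(s)\hR(s)\,ds}{H^s}\lesssim M\,C_{in,s+\frac52}\;\nu\int_0^t \jap{s}^{\frac52}\,e^{-\frac{1}{12}\nu^{\frac13}(t-s)}\,e^{-\frac{1}{32}\nu^{\frac13}s}\,ds .
\end{equation*}
Since $\tfrac{1}{12}-\tfrac{1}{32}=\tfrac{5}{96}>0$, the time integral is $\lesssim \nu^{-\frac13}\jap{t}^{\frac52}e^{-(\frac{1}{12}-\frac{5}{96})\nu^{\frac13}t}=\nu^{-\frac13}\jap{t}^{\frac52}e^{-\frac{1}{32}\nu^{\frac13}t}$, and absorbing $\nu^{\frac23}\jap{t}^{2}\lesssim e^{\frac{1}{64}\nu^{\frac13}t}$ (from $\sup_{x\geq0}\jap{x}^2e^{-x/64}<\infty$ with $x=\nu^{\frac13}t$) gives the middle term $M\jap{t}^{\frac12}e^{-\frac{1}{64}\nu^{\frac13}t}C_{in,s+\frac52}$. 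Summing the three contributions yields \eqref{bd:Omega}.

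The main obstacle is exactly this last step: the kernel factor $\nu\p(s)$ grows in both frequency and time, so one must repeatedly trade growing polynomial weights against the enhanced-dissipation exponentials — spending two derivatives (the loss to $H^{s+\frac52}$) and one power of $\nu^{-\frac13}$ on the time integral, then recovering an admissible $\jap{t}^{\frac12}e^{-c\nu^{\frac13}t}$ bound by giving up a small fraction of the decay rate. Everything else is a direct application of Proposition \ref{th:enhancedcomp} and routine elementary estimates.
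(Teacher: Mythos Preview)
Your proposal is correct and follows essentially the same approach as the paper's proof: write $\Omega=\Xi-R$, solve \eqref{eq:dtXiphys} by Duhamel, use the lower bound $\int_s^t p(\tau)d\tau\geq \tfrac{k^2(t-s)^3}{12}$ for the propagator, and invoke Proposition~\ref{th:enhancedcomp} at regularity $s+\tfrac12$ for the $R$-term and at $s+\tfrac52$ for the Duhamel integrand before trading the resulting polynomial weights against the enhanced-dissipation exponentials. The only cosmetic difference is the order in which you peel off factors in the time integral (you extract $\jap{t}^{5/2}$ and then reabsorb $\nu^{2/3}\jap{t}^2$, while the paper first splits $\jap{\tau}^{5/2}=\jap{\tau}^{1/2}\jap{\tau}^2\leq \jap{t}^{1/2}\jap{\tau}^2$), but the bookkeeping and the final constants coincide.
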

	\begin{remark}
		\label{rem:loss}
		Observe that in Proposition \ref{th:enhancedcomp} and in Corollary \ref{cor:vorticity} we are losing derivatives. The loss in \eqref{bd:RA} comes from the technical obstruction that forces us to introduce the weight $p^{-\frac34}$ for the variable $\Xi-\nu M^2A$. For $\lambda=0$ one does not have this loss of derivatives, see Remark \ref{rem:lambda0}. Instead, the loss of derivatives in \eqref{bd:Omega} seems to be necessary in view of the last term in the right-hand side of \eqref{eq:dtXiphys}, where we can control time-growth by paying regularity.
	\end{remark}
	In the following,  appealing to Proposition \ref{th:enhancedcomp} and Corollary \ref{cor:vorticity} we first prove Theorem \ref{th:NScouetteintro}, while the proofs of the proposition and the corollary are postponed to the end of this subsection.
	\begin{proof}[Proof of Theorem \ref{th:NScouetteintro}]
		We start with the proof of \eqref{bd:compenh}.  From the Helmholtz decomposition \eqref{Helmholtzint} we have 
		\begin{align}
			\norm{Q[\vv](t)}_{L^2}^2+\frac{1}{M^2}\norm{\rho(t)}_{L^2}^2=&\norm{(-\Delta)^{-\frac12}\alpha(t)}_{L^2}^2+\frac{1}{M^2}\norm{\rho(t)}_{L^2}^2\\
			=&\norm{(-\Delta_L)^{-\frac12}A(t)}_{L^2}^2+\frac{1}{M^2}\norm{R(t)}_{L^2}^2,
		\end{align}
		where in the last line we have done the change of variables $X=x-yt, \ Y=y$. By the Plancherel's Theorem and the fact that $p\leq\jap{t}^2\jap{k,\eta}^2$, we get 
		\begin{align}
			\norm{Q[\vv](t)}_{L^2}^2+\frac{1}{M^2}\norm{\rho(t)}_{L^2}^2=&\norm{p^{\frac12}(p^{-\frac34}\widehat{A})(t)}_{L^2}^2+\frac{1}{M^2}\norm{p^{\frac12}(p^{-\frac12}\widehat{R})(t)}_{L^2}^2\\
			\lesssim &\langle t \rangle \left(\norm{(p^{-\frac34}\widehat{A})(t)}_{H^1}^2+\frac{1}{M^2}\norm{(p^{-\frac12}\widehat{R})(t)}_{H^1}^2\right)\\
			\lesssim & \langle t \rangle e^{-\frac{1}{16}\nu^\frac13 t}(C_{in,1})^2,
		\end{align}
		where in the last line we have used \eqref{bd:RA}, hence proving \eqref{bd:compenh}.
		
		We now turn our attention to the solenoidal component of the velocity in order to prove \eqref{bd:P1enh} and \eqref{b:P2enh}. By using again the Helmholtz decomposition, we have 
		\begin{align}
			\norm{P[\vv]^x(t)}_{L^2}=&\norm{\dy\Delta^{-1}\omega(t)}_{L^2}=\norm{(\dY-t\dX)(\Delta_L^{-1}\Omega)(t)}_{L^2}\\
			\leq&\norm{((-\Delta_L)^{-\frac12}\Omega)(t)}_{L^2}. 
		\end{align}
		Therefore, since $p^{\frac12}\jap{kt}\geq \jap{\eta-kt}\jap{kt}\gtrsim \jap{\eta}$, we get 
		\begin{equation}
			\norm{P[\vv]^x(t)}_{L^2}\lesssim \frac{1}{\langle t\rangle}\norm{{\Omega}(t)}_{H^1}
		\end{equation}
		and combining the previous bound with \eqref{bd:Omega} we prove \eqref{bd:P1enh}. The bound \eqref{b:P2enh} follows analogously.
	\end{proof}

	In order to prove Proposition \ref{th:enhancedcomp}, we have to control a weighted energy functional. From the bounds on this energy functional the proof of Proposition \ref{th:enhancedcomp} readily follows.
	\subsubsection{\textbf{The weighted energy functional}}
	We need to introduce the following Fourier multiplier, already used in \cite{BGM15III,bedrossian2019stability,bedrossian2016enhanced,zillinger2020enhanced},
	\begin{align}
		\dt m(t,k,\eta)=&\frac{2\nu^{\frac13}}{\nu^\frac23\left(\frac{\eta}{k}-t\right)^2+1}m(t,k,\eta),\\
		m(0,k,\eta)=&\exp(2\arctan(\nu^{\frac13}\frac{\eta}{k}))
	\end{align}
	which is explicitly given by
	\begin{equation}
		\label{def:m}
		m(t,k,\eta)=\exp(2\arctan(\nu^{\frac13}(t-\frac{\eta}{k})).
	\end{equation}
	Clearly $m$ and $m^{-1}$ are bounded Fourier multipliers, therefore they generates an equivalent norm to the standard $L^2$.
	
	The multiplier $m$ is introduced since it enjoys the following crucial property
	\begin{equation}
		\label{bd:propm}
		\nu p(t,k,\eta)+\frac{\dt m}{m}(t,k,\eta)\geq \nu^{\frac13} \qquad \text{ for any } t\geq 0,\  k\in \mathbb{Z}\setminus\{0\}, \ \eta\in \R,
	\end{equation}
	which compensates the slow down of the enhanced dissipation mechanism close to the critical times $t=\eta/k$.
	We then consider the system given by $(R,A,\Xi-\nu M^2A)$, namely the equations \eqref{eq:NShR}, \eqref{eq:NShA} when replacing $\hOmega$ with $(\hXi-\nu M^2\hA)+\nu M^2\hA-\hR$, and  \eqref{eq:dtXinuAphys1}. Clearly, for the system under consideration the dynamics decouples in $k,\eta$, therefore we can perform estimates at each fixed frequency. Let $s\geq 0$, we define the following weighted variables  
	\begin{equation}
		\label{def:ZiNScomp1}
		\begin{split}
			&Z_1(t)=\frac{1}{M}\jap{k,\eta}^s(m^{-1} p^{-\frac14}\widehat{R})(t), \quad Z_2(t)=\jap{k,\eta}^s(m^{-1} p^{-\frac34}\widehat{A})(t), \\
			& Z_3(t)=\jap{k,\eta}^s(m^{-1} p^{-\frac34}(\widehat{\Xi}-\nu M^2\widehat{A}))(t). 
		\end{split}
	\end{equation}
	Besides the multiplier $\jap{k,\eta}^sm^{-1}$, we remark that $Z_1, Z_2$ are the quantities also used in the non viscous case in order to symmetrize the system, see Section \ref{sec:invcompcouette}. Instead, $Z_3$, as explained, is introduced as an auxiliary variable to close the energy estimate.
	
	Then, let $ 0< \gamma=\gamma(M,\nu)\leq 1/4$ be a parameter to be chosen later and consider the following energy functional
	\begin{align}
		\label{def:Et1}	E(t)=\frac12 \bigg(&\left(1+M^2\frac{(\dt p)^2}{p^3}\right)|Z_1|^2(t)+|Z_2|^2(t)+ |Z_3|^2(t)\\
		\label{def:Et2}&+(\frac{M}{2} \frac{\dt p}{p^{\frac32}}\Re(\bar{Z}_1Z_2))(t)-(2\gamma p^{-\frac12}\Re(\bar{Z}_1Z_2))(t)\bigg).
	\end{align}
	Since $|\dt p|<p$, it is immediate to check that the previous energy functional is coercive, namely 
	\begin{align}
		\label{bd:coerc1}
		E(t)\geq& \ \frac{1}{4}\left((1+M^2\frac{(\dt p)^2}{p^3})|Z_1|^2+|Z_2|^2+ 2|Z_3|^2\right)(t)\\ 
		\label{bd:coerc2}E(t)\leq&\  \left((1+M^2\frac{(\dt p)^2}{p^3})|Z_1|^2+|Z_2|^2+ |Z_3|^2\right)(t).
	\end{align}
	Since $m$ is a bounded Fourier multiplier, we also have 
	\begin{align}
		\label{equiv}
		\sum_{k\neq 0}\int E(t)d\eta\approx\frac{1}{M^2}\norm{p^{-\frac14}\widehat{R}(t)}_{H^s}^2+\norm{p^{-\frac34}\widehat{A}(t)}_{H^s}^2+\norm{p^{-\frac34}(\widehat{\Xi}-\nu M^2 \widehat{A})(t)}^2_{H^s}.
	\end{align}
	The latter equivalence tells us that a suitable estimate on $E(t)$ will imply the bound  \eqref{bd:RA} in Proposition \ref{th:enhancedcomp}. In particular, we aim at proving the following 
	\begin{lemma}
		\label{lemma:dtEcouette}
		Under the assumptions of Proposition \ref{th:enhancedcomp}, let $E(t)$ be defined as in \eqref{def:Et1} and $\gamma=\displaystyle \frac{\nu^\frac13M}{4}$, then 
		\begin{equation}
			\label{bd:Lemmacouette}
			\sum_k \int E(t) d\eta\lesssim e^{-\frac{\nu^\frac13}{16}t}(C_{in,s})^2,
		\end{equation}
		where $C_{in,s}$ is defined in \eqref{def:Cin}.
	\end{lemma}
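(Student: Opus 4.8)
The plan is to reduce \eqref{bd:Lemmacouette} to a frequency-by-frequency Grönwall estimate for $E(t)$, and then conclude by integrating in $\eta$, summing over $k\neq0$ and using the equivalence \eqref{equiv} together with the definition \eqref{def:Cin} of $C_{in,s}$. First I would write down the evolution equations for the weighted unknowns in \eqref{def:ZiNScomp1}. Differentiating $Z_1,Z_2$ and using \eqref{eq:NShR}, \eqref{eq:NShA} together with the splitting $\widehat\Omega=(\widehat\Xi-\nu M^2\widehat A)+\nu M^2\widehat A-\widehat R$, one finds that, modulo the diagonal contributions $-\tfrac{\dt m}{m}$ coming from the multiplier $m$ and $-\mu p\,Z_2$ (the dissipation of $A$), the pair $(Z_1,Z_2)$ solves exactly the symmetrised system of Lemma~\ref{keylemma}, with coefficients $a=\tfrac14\tfrac{\dt p}{p}$, $b=\tfrac{\sqrt p}{M}$, $d=\tfrac{\sqrt p}{M}+\tfrac{2Mk^2}{p^{3/2}}$, plus error terms carrying the prefactors $\tfrac{k^2}{p}$ and $\tfrac{k^2\nu M^2}{p}$; the auxiliary variable $Z_3$ obeys the weighted form of \eqref{eq:dtXinuAphys1}, whose right-hand side consists of the dissipative term $-\nu p\,Z_3$, the term $\nu(\mu-\nu)M^2 p\,Z_2$ that ``scales like the dissipation of $A$'', and further terms with prefactor $O(\nu M^2)$.

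Next I would differentiate $E$ from \eqref{def:Et1}--\eqref{def:Et2} and collect the contributions. The role of the quadratic correction $M^2\tfrac{(\dt p)^2}{p^3}|Z_1|^2$ and of the off-diagonal term $\tfrac{M}{2}\tfrac{\dt p}{p^{3/2}}\Re(\bar Z_1 Z_2)$ is to absorb the destabilising terms $\pm\tfrac{\dt p}{p}$ present in the $(Z_1,Z_2)$ block, exactly the way the time-dependent weights $\zeta,\beta$ did in the proof of Lemma~\ref{keylemma}; what is left are the terms $\tfrac{d}{dt}\big(M^2\tfrac{(\dt p)^2}{p^3}\big)|Z_1|^2$ and $\tfrac{d}{dt}\big(\tfrac{M}{2}\tfrac{\dt p}{p^{3/2}}\big)\Re(\bar Z_1 Z_2)$, whose coefficients are bounded rational functions of $t$ with only finitely many sign changes, hence absolutely integrable in time with a bound depending only on $M$, just as in \eqref{bd:gron1}--\eqref{bd:gron2}. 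The diagonal dissipative terms, combined with the contributions of $m$, produce $-(\mu p+\tfrac{\dt m}{m})|Z_2|^2-(\nu p+\tfrac{\dt m}{m})|Z_3|^2\le-\nu^{1/3}(|Z_2|^2+|Z_3|^2)$ by the crucial property \eqref{bd:propm}; here it is essential that $k\neq0$, which is legitimate since we assume $\rho^{in}_0=\alpha^{in}_0=\omega^{in}_0=0$. Since $Z_1$ carries no dissipation of its own, its decay must be created by the cross term $-2\gamma p^{-1/2}\Re(\bar Z_1 Z_2)$: using $\dt Z_2=d\,Z_1+\dots$ and $\dt Z_1=-b\,Z_2+\dots$, its derivative contains $-2\gamma p^{-1/2}\big(d|Z_1|^2-b|Z_2|^2\big)=-\tfrac{2\gamma}{M}|Z_1|^2+\tfrac{2\gamma}{M}|Z_2|^2+\dots$, so the choice $\gamma=\tfrac{M\nu^{1/3}}{4}$ yields a genuine dissipation $-\tfrac{\nu^{1/3}}{2}|Z_1|^2$ for the density at the price of $\tfrac{\nu^{1/3}}{2}|Z_2|^2$, which is reabsorbed into the enhanced dissipation of $Z_2$; this is the hypocoercive mechanism, in the spirit of Matsumura--Nishida and Guo--Wang, that exploits the coupling between density and divergence.

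The remaining obstacle, and the most delicate part of the bookkeeping, is to control the coupling through $Z_3$ and the leftover error terms. The term $\nu(\mu-\nu)M^2 p\,Z_2$ in the equation for $Z_3$ contributes, after a Young splitting, a quantity bounded by $\nu\mu^2M^4 p\,|Z_2|^2+\tfrac12\nu p\,|Z_3|^2$; the first summand is reabsorbed into the $Z_2$-dissipation precisely because $M\le\mu^{-1/2}$ (and it is absent when $\lambda=0$). All other terms — those in the weighted \eqref{eq:dtXinuAphys1}, the $\tfrac{k^2}{p}$-terms coming from $\widehat\Omega$ in the equation for $Z_2$, and the cross contributions of $-\tfrac{\dt m}{m}$ and $-\mu p$ landing in the off-diagonal parts of $E$ — carry a prefactor bounded by $O(\nu M^2)\le O(\nu^{1/3})$ (using $M\le\nu^{-1/3}$) times a harmless factor $\le1$, hence are absorbed into $-c\nu^{1/3}E$ after shrinking the constant. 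Collecting everything and using the coercivity \eqref{bd:coerc1}--\eqref{bd:coerc2} gives
\begin{equation*}
\Dt E(t)\le\Big(-c\,\nu^{1/3}+g(t)\Big)E(t),\qquad \int_0^{\infty}g(\tau)\,d\tau\le C(M),
\end{equation*}
with $c\ge\tfrac1{16}$ an absolute constant after the adjustments; Grönwall's lemma then gives $E(t)\lesssim e^{-\frac{\nu^{1/3}}{16}t}E(0)$ at each fixed $(k,\eta)$, and integrating in $\eta$, summing over $k\neq0$ and using \eqref{equiv} yields \eqref{bd:Lemmacouette}. The hard point throughout is that the dissipation-free density variable $Z_1$ must still inherit the enhanced rate $\nu^{1/3}$ purely through the acoustic coupling, and this is exactly what dictates both the precise value $\gamma=M\nu^{1/3}/4$ and the two smallness restrictions $M\le\mu^{-1/2}$ and $M\le\nu^{-1/3}$.
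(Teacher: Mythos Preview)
Your outline captures the overall architecture correctly (symmetrised $(Z_1,Z_2)$ block, hypocoercive cross term with $\gamma=M\nu^{1/3}/4$, Young splitting of the $\nu(\mu-\nu)M^2p$ coupling), but there is one term you have not accounted for, and it is precisely the one that drives the whole statement.

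Because $Z_3=\langle k,\eta\rangle^s m^{-1}p^{-3/4}(\widehat\Xi-\nu M^2\widehat A)$ carries the weight $p^{-3/4}$, differentiating produces in $\tfrac12\tfrac{d}{dt}|Z_3|^2$ the contribution $-\tfrac34\tfrac{\dt p}{p}|Z_3|^2$ (this is the term $\mathcal I_3$ in the paper). It does \emph{not} fall under any of your three categories: its prefactor is $O(1)$, not $O(\nu M^2)$; it is not $k^2/p$-integrable; and it cannot be absorbed by $-\big(\nu p+\tfrac{\dt m}{m}\big)|Z_3|^2$, since near $t=\eta/k$ one has $\tfrac{|\dt p|}{p}\approx 2$ while $\nu p+\tfrac{\dt m}{m}\approx \nu^{1/3}$. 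For $t<\eta/k$ this term is positive and its time integral equals $\tfrac34\log\big((k^2+\eta^2)/k^2\big)$, which is unbounded in $\eta$. Hence your pointwise conclusion $E(t)\lesssim e^{-\nu^{1/3}t/16}E(0)$ cannot hold; if it did, the equivalence \eqref{equiv} at $t=0$ would give a bound by $\|\rho^{in}\|_{H^{s-1/2}}$ rather than $\|\rho^{in}\|_{H^{s+1}}$, strictly stronger than the lemma asserts.

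What the paper actually does is keep this term explicitly: one bounds $-\tfrac34\tfrac{\dt p}{p}|Z_3|^2\le -\tfrac34\tfrac{\dt p}{p}\chi_{\{t<\eta/k\}}|Z_3|^2$, feeds it into Gr\"onwall, and obtains the frequency-wise estimate $E(t)\lesssim e^{-\nu^{1/3}t/16}\langle k,\eta\rangle^{3}E(0)$. The extra $\langle k,\eta\rangle^{3}$ is then exactly cancelled by the factors $p(0)^{-1/2}$ and $p(0)^{-3/2}$ sitting inside $E(0)$, which is why the right-hand side of \eqref{bd:Lemmacouette} is $C_{in,s}$ with one extra derivative on $\rho^{in}$ (cf.\ Remark~\ref{rem:loss}). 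Your argument needs this step; once you insert it, the rest of your scheme goes through along the lines you describe, with the $k^2/p$-terms placed in $g$ rather than absorbed into the dissipation.
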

	Thanks to the previous Lemma, we conclude the proof of Proposition \ref{th:enhancedcomp} as follows 
	\begin{align}
		\frac{1}{M^2}\norm{p^{-\frac14}\widehat{R}(t)}_{H^s}^2+&\norm{p^{-\frac34}\widehat{A}(t)}_{H^s}^2+\norm{p^{-\frac34}(\widehat{\Xi}-\nu M^2 \widehat{A})(t)}^2_{H^s}\\ \lesssim& \sum_k \int E(t) d\eta \lesssim e^{-\frac{\nu^\frac13}{16}t}(C_{in,s})^2. 
	\end{align}
	We now have to prove Lemma \ref{lemma:dtEcouette}.	
	\begin{proof}[Proof of Lemma \ref{lemma:dtEcouette}] We are going to prove the bound \eqref{bd:Lemmacouette} via a Gr\"onwall's inequality. Therefore, we have to first compute the time derivative of $E(t)$.
		
		First of all, observe that 
		\begin{align}
			\label{eq:Z1}\dt Z_1=&-\frac{\dt m}{m}Z_1-\frac14\frac{\dt p}{p}Z_1-\frac{1}{M}p^{\frac12}Z_2,\\
			\label{eq:Z2}\dt Z_2=&-\left(\frac{\dt m}{m}Z_2+\mu p\right)Z_2+\frac14\frac{\dt p}{p}Z_2+\left(\frac{1}{M}p^{\frac12}+2M\frac{k^2}{p^{\frac32}}\right)Z_1\\
			&-2\frac{k^2}{p}Z_3-2\nu M^2\frac{k^2}{p}Z_2,\\
			\label{eq:Z3}\dt Z_3=&-\left(\frac{\dt m}{m}+\nu p\right)Z_3 -\frac34 \frac{\dt p}{p}Z_3+\nu (\mu-\nu)M^2  pZ_2-\nu M^2 \frac{\dt p}{p}Z_2\\
			&-2\nu M^3 \frac{k^2}{p^{\frac32}}Z_1+2\nu M^2 \frac{k^2}{p}Z_3+2\nu^2 M^4\frac{k^2}{p}Z_2.
		\end{align}
		
		From  \eqref{eq:Z1} we directly compute that 
		\begin{equation}
			\label{dt:Z1}
			\frac12\Dt |Z_1|^2=-\frac{\dt m}{m}|Z_1|^2-\frac14\frac{\dt p}{p}|Z_1|^2-\frac1M p^{\frac12}\Re(\bar{Z}_1Z_2),
		\end{equation}
		and 
		\begin{equation}
			\label{dt:Z1M2}
			\begin{split}
				\frac{M^2}{2}\Dt \left|\frac{\dt p}{p^{\frac32}}Z_1\right|^2=&M^2\left(\frac{2k^2\dt p}{p^3}-\frac32\frac{(\dt p)^3}{p^4}\right)|Z_1|^2-M^2\frac{\dt m}{m}\frac{(\dt p)^2}{p^{3}}|Z_1|^2\\
				&-\frac{M^2}{4}\frac{(\dt p)^3}{p^4}|Z_1|^2-M \frac{(\dt p)^2}{p^{\frac52}}\Re(\bar{Z}_1Z_2).
			\end{split}
		\end{equation}
		By \eqref{eq:Z2} we get
		\begin{align}
			\label{dt:Z2}
			\frac12 \Dt |Z_2|^2=&-\left(\frac{\dt m}{m}+\mu p\right)|Z_2|^2+\frac14 \frac{\dt p}{p}|Z_2|^2+\frac1M p^{\frac12}\Re(Z_1\bar{Z}_2)\\
			&+2M\frac{k^2}{p^{\frac32}}\Re(Z_1\bar{Z}_2)-2\frac{k^2}{p}\Re(Z_3\bar{Z}_2)-2\nu M^2\frac{k^2}{p}|Z_2|^2.
		\end{align}
		From \eqref{eq:Z3} we have 
		\begin{align}
			\label{dt:Z3}
			\frac12 \Dt |Z_3|^2=&-\left(\frac{\dt m}{m}+\nu  p\right)|Z_3|^2-\frac34\frac{\dt p}{p}|Z_3|^2\\
			&+\nu(\mu-\nu)M^2p \Re(Z_2\bar{Z}_3)-\nu M^2 \frac{\dt p}{p}\Re(Z_2\bar{Z}_3)\\
			&-2\nu M^3\frac{k^2}{p^\frac32}\Re(Z_1\bar{Z}_3)+2\nu M^2\frac{k^2}{p}|Z_3|^2+2\nu^2 M^4\frac{k^2}{p}\Re(Z_2\bar{Z}_3).
		\end{align}
		Now we compute the time derivative of the mixed terms appearing in \eqref{def:Et2}.	
		The first term in \eqref{def:Et2} is introduced to cancel  the terms with $\frac14 \dt p/p$ coming from the summation of \eqref{dt:Z1} with \eqref{dt:Z2}, indeed observe that
		\begin{align}
			\label{dt:mix1}
			\frac{M}{4}\Dt (\frac{\dt p}{p^{\frac32}}\Re(\bar{Z}_1Z_2))=&\frac{M}{4}\left(\frac{2k^2}{p^\frac32}-\frac32 \frac{(\dt p)^2}{p^\frac52}\right)\Re(\bar{Z}_1Z_2)-\frac{M}{2}\frac{\dt m}{m}\frac{\dt p}{p^{3/2}}\Re(\bar{Z}_1Z_2)\\
			\label{dt:mix11}&-\frac14 \frac{\dt p}{p}(|Z_2|^2-|Z_1|^2)-\mu\frac{M}{4}\frac{\dt p}{p^{\frac12}}\Re(\bar{Z}_1Z_2)\\
			&+M^2\frac{k^2\dt p}{2p^3}|Z_1|^2-M\frac{k^2\dt p}{2p^{\frac52}}\Re(\bar{Z}_1Z_3)\\
			&-\nu M^3\frac{k^2\dt p}{2p^{\frac52}}\Re(\bar{Z}_1Z_2)
		\end{align}
		The second term in \eqref{def:Et2} give us a dissipative term for $Z_1$ as follows
		\begin{align}
			\label{eq:dtZ1Z2}
			-\gamma \Dt \left(p^{-\frac12}\Re(\bar{Z}_1Z_2)\right)=&\frac{\gamma}{2} \frac{\dt p}{p^{\frac32}}\Re(\bar{Z}_1Z_2)+2\gamma\frac{\dt m}{m}p^{-\frac12}\Re(\bar{Z}_1Z_2)\\
			&+\frac{\gamma}{M}|Z_2|^2-\frac{\gamma}{M}\left(1+2M^2\frac{k^2}{p^2}\right)|Z_1|^2\\
			&+\gamma \mu p^\frac12\Re(\bar{Z}_1Z_2)\\
			&+2\gamma \frac{k^2}{p^{\frac32}}\Re(\bar{Z}_1Z_3)+2\gamma \nu M^2\frac{k^2}{p^{\frac32}}\Re(\bar{Z}_1Z_2).
		\end{align}
		Hence, by rearranging the terms appearing in \eqref{dt:Z1}, \eqref{dt:Z1M2}, \eqref{dt:Z2}, \eqref{dt:Z3}, \eqref{dt:mix1} and  \eqref{eq:dtZ1Z2}  we have the following identity 
		\begin{align}
			\label{eq:dtE}
			\Dt E(t)=&-\left(\frac{\dt m}{m}+\mu p\right)|Z_2|^2-\left(\frac{\dt m}{m}+\frac{\gamma}{M}\big(1+2M^2\frac{k^2}{p^2}\big)\right)|Z_1|^2\\
			&-\left(\frac{\dt m}{m}+\nu p\right)|Z_3|^2+\sum_{i=1}^5\mathcal{D}_i+\sum_{i=1}^6\mathcal{I}_i,
		\end{align}
		where we define the \textit{dissipative error terms} as
		\begin{align}
			\label{def:D1}
			\mathcal{D}_1=&\frac{\gamma}{M}|Z_2|^2, \quad \mathcal{D}_2=\gamma \mu p^{\frac12}\Re(\bar{Z}_1Z_2), \quad \mathcal{D}_3=\nu(\mu-\nu)M^2 p\Re(\bar{Z}_2Z_3) \\
			\label{def:D2} \mathcal{D}_4=&-\nu M^2\frac{\dt p}{p}\Re(\bar{Z}_2Z_3), \quad \mathcal{D}_5=-\frac{\mu M}{4}\frac{\dt p}{p^{\frac12}}\Re(\bar{Z}_1Z_2),
		\end{align}
		which we need to control with the negative terms appearing in \eqref{eq:dtE}. Instead, the \textit{integrable error terms} are given by
		\begin{align}
			\label{def:I11}\mathcal{I}_1=&M^2\left(\frac{5k^2\dt p}{2p^3}-\frac{7}{4}\frac{(\dt p)^3}{p^4}\right)|Z_1|^2,\\
			\label{def:I21}\mathcal{I}_2=&\ M\left(\frac12(\frac{\gamma}{M}-\frac{\dt m}{m})\frac{\dt p}{p^\frac32}+(\frac52+2\gamma \nu M)\frac{k^2}{p^{\frac32}}-\frac{11}{8}\frac{(\dt p)^2}{p^{\frac52}}\right)\Re(\bar{Z}_1Z_2)\\
			\label{def:I22}&-\nu \frac{k^2M^3 \dt p}{2p^\frac52}\Re(\bar{Z}_1Z_2)+2\gamma \frac{\dt m}{m}p^{-\frac12}\Re(\bar{Z}_1Z_2),\\
			\label{def:I3}\mathcal{I}_3=&\ -\frac34 \frac{\dt p}{p}|Z_3|^2+2\nu M^2\frac{k^2 }{p}|Z_3|^2,\\
			\label{def:I4} \mathcal{I}_4=&\ M\left(2(\frac{\gamma}{M}-\nu M^2)\frac{k^2}{p^{\frac32}} -\frac{k^2\dt p}{2p^\frac52}\right)\Re(\bar{Z}_1Z_3),\\
			\label{def:I5} \mathcal{I}_5=&\ \left(-2\frac{k^2}{p}+2\nu^2M^4 \frac{k^2}{p}\right)\Re(\bar{Z}_2Z_3),\\
			\label{def:I6} \mathcal{I}_6=&\ -M^2\frac{\dt m}{m}\frac{(\dt p)^2}{p^3}|Z_1|^2-2\nu \frac{k^2M^2}{p}|Z_2|^2,
		\end{align} 
		which all involve Fourier multipliers integrable in time, as we explain below.
		
		Now we proceed by providing suitable bounds on the terms $\mathcal{D}_i$. To control $\mathcal{D}_1$, in view of the property \eqref{bd:propm}, we need to choose $\gamma$ such that $\gamma/M< \nu^{\frac13}$.	Therefore, we define 
		\begin{equation}
			\label{choicegamma}
			\gamma=\frac{M\nu^{\frac13}}{4}
		\end{equation} 
		and notice that $\gamma\leq 1/4$ by assumptions on $M$. To control the remaining terms, we also need to exploit the hypothesis 
		\begin{equation}
			\label{hyp:Mmugamma}
			\mu M^2 \leq 1.
		\end{equation}
		Since $\gamma \leq1/4$ and $M\mu\leq 1$, we bound $\mathcal{D}_2$ as 
		\begin{align}
			\label{bd:D2} |\mathcal{D}_2|\leq  \frac{\gamma}{2}\mu p|Z_2|^2+\frac{\gamma}{2M}(M\mu)|Z_1|^2\leq \frac{\mu}{8} p|Z_2|^2+\frac{\gamma}{2M}|Z_1|^2. 
		\end{align}
		To control $\mathcal{D}_3$, in view of the restriction $\mu M^2\leq 1$, we  have
		\begin{align}
			\label{bd:D3}
			|\mathcal{D}_3|&\leq \frac{\nu}{2}p|Z_2|^2+\frac{\nu}{2}p|Z_3|^2. 
		\end{align}
		Since $|\dt p|\leq 2|k|p^\frac12$, the bounds on $\mathcal{D}_4, \mathcal{D}_5$ are given by 
		\begin{align}
			\label{bd:D4}
			|\mathcal{D}_4|&\leq \frac{\nu}{4}|Z_3|^2+\nu M^4 \frac{(\dt p)^2}{p^2}|Z_2|^2\leq\frac{\nu}{4}|Z_3|^2+4M^2 \frac{k^2}{p}|Z_2|^2,\\
			\label{bd:D5}|\mathcal{D}_5|&\leq \frac{\mu}{16}p|Z_2|^2+\frac{\mu M^2k^2}{p}|Z_1|^2 
		\end{align}
		Notice that the last terms in the right-hand side of the last two inequalities need not to be absorbed with the negative terms in \eqref{eq:dtE}, being $k^2p^{-1}$ integrable in time.
		
		We now turn our attention to provide bounds for the terms $\mathcal{I}_i$, where we can exploit integrability in time of the Fourier multipliers. More precisely, we have two main contributions, one given by $\dt m/m$, which is clearly integrable in time and can also be absorbed with the negative terms appearing in \eqref{eq:dtE}. The second one is the multiplier $k^2p^{-1}$, appearing for example in $\mathcal{I}_3,\mathcal{I}_5$, whose integral in time is uniformly bounded with respect to $k,\eta$, namely
		\begin{equation}
			\int_0^t \frac{k^2}{p(\tau)} d\tau=\int_0^t\frac{d\tau}{(\frac{\eta}{k}-\tau)^2+1}=\ \left(\arctan(\frac{\eta}{k}-t)-\arctan(\frac{\eta}{k})\right).
		\end{equation}  
		In addition, for the term $\mathcal{I}_3$, since $\dt p>0$ for $t>\eta/k$, we have
		\begin{equation}
			\label{bd:I3}
			|\mathcal{I}_3|\leq -\frac34 \frac{\dt p}{p}\chi_{t<\frac{\eta}{k}}|Z_3|^2+\frac{2k^2M^2}{p}|Z_3|^2, 
		\end{equation}
		therefore we can also integrate in time the first term in the right-hand side of the last inequality. However, this will be the source of a loss of regularity as it will be clear later on.
		
		Then, since $|\dt p|\leq 2|k|p^\frac12$ and recalling \eqref{hyp:Mmugamma}, we roughly estimate the remaining terms as follows 
		\begin{align}
			\label{bd:I1} |\mathcal{I}_1|&\leq C_1M^2\frac{k^2}{p} |Z_1|^2,\\
			\label{bd:I2} |\mathcal{I}_2|&\leq  \left(C_2M\frac{k^2}{p}+\frac{1}{2}\frac{\dt m}{m}\right)(|Z_1|^2+|Z_2|^2),\\
			\label{bd:I4}|\mathcal{I}_4|&\leq C_4M\frac{k^2}{p}(|Z_1|^2+|Z_3|^2),\\
			\label{bd:I5} |\mathcal{I}_5|&\leq 2\frac{k^2}{p}(|Z_2|^2+|Z_3|^2),\\
			\label{bd:I6} \mathcal{I}_6&\leq 0,
		\end{align}
		where we perform the last trivial bound since we cannot gain much from $\mathcal{I}_6$.
		
		Therefore, thanks to the choice of $\gamma$ in \eqref{choicegamma}, the properties \eqref{bd:propm} and \eqref{bd:coerc1}, by combining \eqref{bd:D2}, \eqref{bd:D3}, \eqref{bd:D4}, \eqref{bd:I1}-\eqref{bd:I6} with \eqref{eq:dtE} we infer 
		\begin{align}
			\label{bd:dtE1}
			\Dt E(t)\leq& -\frac{\nu^\frac13}{16}\left((1+4M^2\frac{k^2}{p^2})|Z_1|^2+|Z_2|^2+|Z_3|^2\right)\\
			&+4\left(C_M\frac{k^2}{p}+\frac{1}{2}\frac{\dt m}{m}\right)E(t)-\frac34 \frac{\dt p}{p}\chi_{t<\frac{\eta}{k}}|Z_3|^2,
		\end{align}
		where $C_M$ is explicitly computable from the previous bounds. Then, since 
		\begin{equation}
			\label{bd:dtpp3}
			M^2\frac{(\dt p)^2}{p^3}\leq 4M^2 \frac{k^2}{p^2},
		\end{equation}
		and from \eqref{bd:coerc2} we know  that $|Z_3|^2\leq 2E(t)$, by \eqref{bd:dtE1} we get 
		\begin{equation}
			\Dt E(t)\leq -\frac{\nu^{\frac13}}{16}E(t)+\left(-\frac32 \frac{\dt p}{p}\chi_{t<\frac{\eta}{k}}+4C_M\frac{k^2}{p}+2\frac{\dt m}{m}\right)E(t),
		\end{equation}
		hence, applying  Gr\"onwall's Lemma we have 
		\begin{equation}
			\label{bd:E1}
			E(t)\leq \widetilde{C}_Me^{-\frac{ \nu^\frac13 }{16}t} \frac{p(0)^\frac32}{p(t)^{\frac32}\chi_{t<\frac{\eta}{k}}+p(\eta/k)^{\frac32}\chi_{t>\frac{\eta}{k}}}E(0)\leq \widetilde{C}_Me^{-\frac{ \nu^\frac13 }{16}t}\jap{k,\eta}^{3}E(0),
		\end{equation}
		where $\widetilde{C}_M=\exp(5\pi C_M)$. Clearly the term $\jap{k,\eta}^{3}$ is the one which cause the loss of regularity, coming from the bound \eqref{bd:I3} as we have stressed previously. 
		
		To conclude the proof of Lemma \ref{lemma:dtEcouette}, by summing in $k$ and integrating in $\eta$ in \eqref{bd:E1}, thanks to \eqref{bd:coerc2} and \eqref{def:ZiNScomp1} we have 
		\begin{equation}
			\label{bd:energy}
			\sum_{k\neq 0}\int E(t)d\eta \lesssim e^{-\frac{\nu^\frac13}{16}t}\left(\frac{1}{M^2}\norm{{R}^{in}}_{H^{s+1}}^2+\norm{{A}^{in}}_{H^{s}}^2+\norm{\Xi^{in}-\nu M^2A^{in}}_{H^{s}}^2\right),
		\end{equation}
		therefore, in view of \eqref{def:Cin}, the proof of Lemma \ref{lemma:dtEcouette} is over.
	\end{proof}
	\begin{remark}\label{rem:condM}
		Combining the choice of $\gamma$ with the restrictions \eqref{hyp:Mmugamma} we immediately recover the hypothesis on the Mach number made in Theorem \ref{th:NScouetteintro}, namely $	M\leq \min\{ \mu^{-\frac12}, \nu^{-\frac13}\}$. 	However, by choosing $\gamma= \delta M\nu^{\frac13}/4$ for $0<\delta \leq 1$, it would be sufficient that $	M\leq \min\{ \mu^{-\frac12}, \delta^{-1}\nu^{-\frac13}\}$,
		while in the exponential bound \eqref{bd:Lemmacouette} a factor $\delta$ will appear, namely we have $e^{-\delta\frac{\nu^\frac13}{16}t}$.	Therefore, we could slightly improve the range of available Mach numbers by deteriorating the decay rates.  
		
		In addition, from \eqref{bd:E1} we also see that the constants hidden when using the symbol $\lesssim$ grows exponentially fast with respect to $M$, which is clearly irrelevant for $M\approx1$ but deteriorates extremely the bounds for larger values of $M$. It should be possible to improve this dependency up to constants $O(\jap{M}^\beta)$ for some $\beta>1$ by considering exactly the energy functional used in the inviscid case, see Lemma \ref{keylemma} and Remark \ref{rem:Minv}, plus the terms due to the viscosity.
	\end{remark}
	\begin{remark}[Regularity in absence of bulk viscosity]
		\label{rem:lambda0}
		When $\lambda=0$, namely $\mu=\nu$, it is sufficient to consider the auxiliary variable 
		\begin{equation}
			\widetilde{Z}_3=\jap{k,\eta}^sm^{-1}(\hXi-\nu M^2\hA),
		\end{equation}
		which satisfy the following equation 
		\begin{equation}
			\begin{split}	
				\dt \widetilde{Z}_3=&-\left(\frac{\dt m}{m}+\nu p\right)\widetilde{Z}_3 -\nu M^2 \frac{\dt p}{p^\frac14}Z_2\\
				&-2\nu M^3 \frac{k^2}{p^{\frac34}}Z_1+2\nu M^2 \frac{k^2}{p}\widetilde{Z}_3+2\nu^2 M^4\frac{k^2}{p^{\frac14}}Z_2,
			\end{split}
		\end{equation}
		where $Z_1$ and $Z_2$ are defined in \eqref{def:ZiNScomp1}. We can then proceed as in the proof of Lemma \ref{lemma:dtEcouette}, clearly by defining the new error terms accordingly. For example, the most dangerous one can be controlled as follows 
		\begin{align*}
			\nu M^2 \frac{|\dt p|}{p^\frac14}\Re(\bar{Z}_2\widetilde{Z}_3)\leq& 4\nu M^4\frac{|\dt p|}{p^{\frac54}}|Z_2|^2+\frac{\nu}{16}p|\widetilde{Z}_3|^2\\
			\leq& 4\nu M^4\frac{|k|}{p^{\frac34}}|Z_2|^2+\frac{\nu}{16}p|\widetilde{Z}_3|^2, 
		\end{align*} 
		since $|k|p^{-\frac34}$ is integrable in time. However, by using $\widetilde{Z}_3$ we will not have the error term containing the multiplier $-\frac34\dt p/p$, see \eqref{def:I3}, meaning that in the bound analogous to \eqref{bd:E1} there is not $\jap{k,\eta}$, hence in Lemma \ref{lemma:dtEcouette} we do not lose derivatives (consequently also in Proposition \ref{th:enhancedcomp})
		
	\end{remark}
	We now turn our attention to the proof of Corollary \ref{cor:vorticity}.
	\begin{proof}[Proof of Corollary \ref{cor:vorticity}]
		In order to prove \eqref{bd:Omega}, let 
		\begin{equation*}
			\mathcal{L}_\nu(t,k,\eta)=\nu \int_0^tp(\tau,k,\eta)d\tau.
		\end{equation*}
	Since 
	\begin{equation}
		\frac13k^2t^2+k^2+\eta^2-\eta kt=\frac13k^2t^2+k^2 +\left(\frac12 kt-\eta\right)^2-\frac14 k^2t^2\geq \frac{1}{12}k^2t^2,
	\end{equation} 
 we get 
		\begin{equation}
			\label{bd:propenheat}
			\norm{e^{\mathcal{L}_\nu(t)}f}_{H^s}\leq e^{-\frac{1}{12}\nu t^3}\norm{f}_{H^s}\leq e^{-\frac{1}{12}\nu^{\frac13}t}\norm{f}_{H^s},
		\end{equation}  
		Therefore, solving \eqref{eq:dtXiphys} via Duhamel's formula we have 
		\begin{equation}
			\widehat{\Xi}(t)=e^{\mathcal{L}_\nu(t)}\Xi^{in}+\nu\int_0^t e^{\mathcal{L}_\nu(t-\tau)}p(\tau)\widehat{R}(\tau)d\tau.
		\end{equation}
		Appealing to \eqref{bd:propenheat}, we get 
		\begin{equation}
			\label{bd:Xi1}
			\norm{\widehat{\Xi}(t)}_{H^s}\leq e^{-\frac{1}{12}\nu^{\frac13}t}\norm{\widehat{\Xi}^{in}}_{H^s}+\nu \int_0^te^{-\frac{1}{12}\nu^{\frac13}(t-\tau)}\norm{p(\tau)\widehat{R}(\tau)}_{H^s}d\tau.
		\end{equation}
		To bound the integrand of the last equation, we exploit the bound obtained on $M^{-1}p^{-1/4}\widehat{R}$, see \eqref{bd:RA}. In particular, by using that $p\leq \jap{t}^2\jap{k,\eta}^2$, we have 
		\begin{align}
			\norm{p(\tau)\widehat{R}(\tau)}_{H^s}=&M\norm{p^{\frac54}(\tau)(M^{-1}p^{-\frac14}\widehat{R})(\tau)}_{H^s}\\
			\lesssim& M\langle \tau \rangle^{\frac52}\norm{(M^{-1}p^{-\frac14}\widehat{R})(\tau)}_{H^{s+\frac52}}\\
			\lesssim & M\langle \tau \rangle^{\frac52}e^{-\frac{1}{32}\nu^{\frac13}\tau}C_{in,s+\frac52}
		\end{align}
		where we recall the definition of $C_{in,s}$ given in \eqref{def:Cin}.     Consequently we get
		\begin{align}
			\nu \int_0^t &e^{-\frac{1}{12}\nu^{\frac13}(t-\tau)}\norm{p(\tau)\widehat{R}(\tau)}_{H^s}d\tau\lesssim\ M C_{in,s+\frac52} \nu \int_0^t e^{-\frac{1}{12}\nu^{\frac13}(t-\tau)}\langle \tau \rangle^{\frac52}e^{-\frac{1}{32}\nu^{\frac13}\tau}d\tau\\
			\lesssim&\ MC_{in,s+\frac52}\nu \jap{t}^\frac12 \int_0^te^{-\frac{1}{12}\nu^{\frac13}(t-\tau)}\nu^{-\frac23}(\nu^\frac13\jap{\tau})^2e^{-\frac{1}{32}\nu^{\frac13}\tau}d\tau\\
			\lesssim&\ MC_{in,s+\frac52}\nu^{\frac13} \jap{t}^\frac12 \int_0^te^{-\frac{1}{12}\nu^{\frac13}(t-\tau)}e^{-\frac{1}{64}\nu^{\frac13}\tau}d\tau\\
			\lesssim &\ MC_{in,s+\frac52}\nu^{\frac13} \jap{t}^\frac12e^{-\frac{1}{64}\nu^{\frac13}t}\int_0^te^{-(\frac{1}{12}-\frac{1}{64})\nu^{\frac13}(t-\tau)}d\tau\\
			\lesssim &\ MC_{in,s+\frac52}\jap{t}^\frac12e^{-\frac{1}{64}\nu^\frac13 t}.
		\end{align}
		Combining the previous estimate with \eqref{bd:Xi1} we obtain 
		\begin{equation}
			\label{bd:Xipf}
			\norm{{\Xi}(t)}_{H^s}\lesssim e^{-\frac{1}{12}\nu^{\frac13}t}\norm{{\Xi}^{in}}_{H^s}+M\jap{t}^\frac12 e^{-\frac{1}{64}\nu^\frac13 t}C_{in,s+\frac52}
		\end{equation}
		Finally, we directly recover the bound on $\Omega$ as follows
		\begin{align*}
			\norm{\Omega(t)}_{H^s}\leq &\norm{\Xi(t)}_{H^s}+\norm{R(t)}_{H^s}\\
			=&\norm{\Xi(t)}_{H^s}+M\norm{p^{\frac14}(M^{-1}p^{-\frac14}R)(t)}_{H^s}\\
			\lesssim &\norm{\Xi(t)}_{H^s}+M\langle t \rangle^{\frac12}\norm{M^{-1}p^{-\frac14}R(t)}_{H^{s+\frac12}}.
		\end{align*} 
		The proof of the Corollary \ref{cor:vorticity} then follows by combining the last bound with \eqref{bd:RA} and \eqref{bd:Xipf}.
	\end{proof}
	\subsection{Dissipation enhancement without loss of derivates}
	\label{subsec:thNS2}
	The purpose of this subsection is to prove Theorem \ref{th:NSnoloss}, where again we assume that $\rho_{in,0}=\alpha_{in,0}=\omega_{in,0}=0$. In the following, we first present a toy model to introduce the key Fourier multiplier which is crucial to avoid the loss of derivative encountered in Proposition \ref{th:enhancedcomp}, see also Remark \ref{rem:loss}. Then, in analogy with the previous subsection, we present a weighted estimate that follows by the control of a suitable energy functional.  Theorem \ref{th:NSnoloss} is a consequence of the weighted estimate. 
	
	\subsubsection*{The key Fourier multiplier}
	In the inviscid case and in the previous subsection it was crucial to properly symmetrize the system by weighting the density and the divergence with some negative powers of the Laplacian, see \eqref{def:hZ} and \eqref{def:ZiNScomp1} and recall that $p$ is the symbol associated to $-\Delta_L$. This was essential to balance the growth given by the term $\dt p/p$ present in the equation for the divergence, see also Remark \ref{rem:trandec}. However, in the viscous case it is possible to balance the growth given by $\dt p/p$ by using the dissipation.  To explain how, we consider a toy model introduced by Bedrossian, Germain and Masmoudi in \cite{BGM15III}. In particular, consider the following
	\begin{equation}
		\label{eq:ftoy}
		\dt f=\frac{\dt p}{p}f-\nu pf,
	\end{equation}
	which is clearly a relevant toy model also in our case, since the first two terms in the right-hand side of \eqref{eq:NShA} have exactly this structure. First of all, for $t\leq\eta/k$ we know that $\dt p=-2k(\eta-kt)\leq 0$, hence we may ignore this term in an energy estimate. Instead, for $t\geq \eta/k$ we have $\dt p\geq0$ leading to a growth on $f$ that it is not balanced by the dissipative term, indeed near the critical times $t=\eta/k$ one has $\nu p\approx \nu k^2$. More precisely, we cannot hope to have a uniform estimate like $\dt p/p\lesssim \nu p$ for $t\in[\eta/k,\eta/k+C_{\nu}]$ for some $C_{\nu}$. If we are sufficiently far away from the critical times dissipation overcomes the growth, namely for any $\beta>0$ one has
	\begin{align}
		\label{bd:trivnup}\nu p(t,k,\eta)\geq \ \beta^2 \nu^{\frac13}, \qquad &\text{if }  |t-\frac{\eta}{k}|\geq \beta \nu^{-\frac13},\\ 
		\label{bd:p'pnu}\frac{\dt p}{p}(t,k,\eta)\leq \frac{2}{\sqrt{1+(\frac{\eta}{k}-t)^2}}\leq \ 2\beta^{-1}\nu^{\frac13} \qquad& \text{if }  |t-\frac{\eta}{k}|\geq \beta \nu^{-\frac13},
	\end{align}
	so that for $\beta >2$ we see that $\dt p/p\leq \nu p/4$ if $|t-\eta/k|\geq \beta \nu^{-\frac13}$.
	
	In order to control the growth near the critical times, for a fixed $\beta>2$ to be specified later we introduce the following Fourier multiplier 
	\begin{equation}
		\label{def:w}
		\begin{split}
			(\dt w)(t,k,\eta)=&\begin{cases}
				0 \qquad &\text{if } t\notin[\frac{\eta}{k},\frac{\eta}{k}+\beta \nu^{-\frac13}] \\
				\displaystyle \left(\frac{\dt p}{p}w\right)(t,k,\eta) \qquad &\text{if }  t\in[\frac{\eta}{k},\frac{\eta}{k}+\beta \nu^{-\frac13}]
			\end{cases}\\
			w(0,k,\eta)=&1,
		\end{split}
	\end{equation} 
	which is explicitly given by
	\begin{equation}
		\label{def:explw}
		w(t,k,\eta)= \begin{cases}
			1 &\qquad \text{if } \eta k\geq 0 \text{ and }0\leq t\leq \frac{\eta}{k},\\
			1 &\qquad \text{if } \eta k< 0, \ |\frac{\eta}{k}|\geq \beta\nu^{-\frac13} \text{ and } t\geq 0,\\
			\displaystyle\frac{p(t,k,\eta)}{k^2} &\qquad \text{if } \frac{\eta}{k}\leq t\leq \frac{\eta}{k}+\beta \nu^{-\frac13},\\
			1+\beta^2\nu^{-\frac23} &\qquad \text{in all the other cases}.
		\end{cases}
	\end{equation} 
	This multiplier has been used also in \cite{BGM15III,liss2020sobolev}. Let us state some properties of this multiplier. 
	\begin{lemma}
		\label{lemma:propw}
		Let $w$ and $m$ be the Fourier multipliers defined in \eqref{def:explw} and \eqref{def:m} respectively. Then, for any $t\geq 0$, $\eta \in \R$ and $k\in \mathbb{Z}\setminus\{0\}$ the following inequalities holds:
		\begin{align}
			\label{bd:w}
			1\leq w(t,k,\eta)\leq& \beta^2\nu^{-\frac23},\\
			\label{bd:wp-1}(wp^{-1})(t,k,\eta)\leq &\frac{1}{k^2}.
		\end{align}
		In addition, for any $ \max\{2(\beta(\beta^2-1))^{-1},4\beta^{-1}\}<\delta_\beta\leq 1$ one has
		\begin{align}
			\label{bd:dtwdeltanup}
			\left(\delta_\beta(\frac{\dt m}{m}+\nu p)+\frac{\dt w}{w}-\frac{\dt p}{p}\right)(t,k,\eta)\geq& \ \delta_\beta \nu^{\frac13},\\
			\label{bd:dtwdeltanu13}\left(\delta_\beta(\frac{\dt m}{m}+\nu^\frac13)+\frac{\dt w}{w}-\frac{\dt p}{p}\right)(t,k,\eta)\geq&\ \frac{\delta_\beta}{2} \nu^{\frac13},
		\end{align} 
	\end{lemma}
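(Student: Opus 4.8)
The plan is to verify all four inequalities directly from the explicit formula \eqref{def:explw} for $w$, the defining ODE \eqref{def:w}, and the property \eqref{bd:propm} of $m$, splitting into the three natural $t$-regimes at the critical time $t=\eta/k$.

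First, for \eqref{bd:w} and \eqref{bd:wp-1} I would argue as follows. Since $w(0,k,\eta)=1$ and, by \eqref{def:w}, either $\dt w=0$ or $\dt w=(\dt p/p)w$ on $t\in[\eta/k,\eta/k+\beta\nu^{-\frac13}]$ — where $\dt p=-2k(\eta-kt)=2k^2(t-\eta/k)\geq0$ — the multiplier $w$ is non-decreasing in $t$ and stays $\geq1$. For the upper bound one inspects the four cases of \eqref{def:explw}: $w=1$ in the first two, $w=p/k^2=1+(t-\eta/k)^2\leq 1+\beta^2\nu^{-\frac23}$ in the third, and $w=1+\beta^2\nu^{-\frac23}$ in the last, which is of size $\beta^2\nu^{-\frac23}$ since $\nu\leq1/2$ forces $\beta^2\nu^{-\frac23}\geq1$. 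For \eqref{bd:wp-1}: when $w=1$ use $p\geq k^2$; when $w=p/k^2$ there is equality; and in the last case one checks it occurs precisely for $t>\eta/k+\beta\nu^{-\frac13}$ (if $t<\eta/k$ then $\eta/k>t\geq0$, so $\eta k>0$ and we are in the first case), whence $p=k^2\big(1+(t-\eta/k)^2\big)>k^2(1+\beta^2\nu^{-\frac23})=k^2w$.

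For the differential inequalities \eqref{bd:dtwdeltanup}–\eqref{bd:dtwdeltanu13}, fix $k\neq0$, $\eta$ and split $t\geq0$ into: (i) $0\leq t<\eta/k$ (possibly empty); (ii) $\eta/k\leq t\leq\eta/k+\beta\nu^{-\frac13}$; (iii) $t>\eta/k+\beta\nu^{-\frac13}$. On (ii) one has $\dt w/w=\dt p/p$ by \eqref{def:w}, so $\dt w/w-\dt p/p=0$ and the left-hand sides reduce to $\delta_\beta(\dt m/m+\nu p)$, resp. $\delta_\beta(\dt m/m+\nu^{\frac13})$, which are $\geq\delta_\beta\nu^{\frac13}$ by \eqref{bd:propm}, resp. trivially. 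On (i), $\dt w/w=0$ and $\dt p=-2k^2(\eta/k-t)<0$, so $-\dt p/p>0$ and again both left-hand sides are $\geq\delta_\beta\nu^{\frac13}$. On (iii), $\dt w/w=0$, and since $|t-\eta/k|>\beta\nu^{-\frac13}$, \eqref{bd:trivnup} and \eqref{bd:p'pnu} give $\nu p\geq\beta^2\nu^{\frac13}$ and $\dt p/p\leq2\beta^{-1}\nu^{\frac13}$; hence the left-hand side of \eqref{bd:dtwdeltanup} is $\geq(\delta_\beta\beta^2-2\beta^{-1})\nu^{\frac13}\geq\delta_\beta\nu^{\frac13}$ provided $\delta_\beta(\beta^2-1)\geq2\beta^{-1}$, and that of \eqref{bd:dtwdeltanu13} is $\geq(\delta_\beta-2\beta^{-1})\nu^{\frac13}\geq\tfrac{\delta_\beta}{2}\nu^{\frac13}$ provided $\delta_\beta\geq4\beta^{-1}$. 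These two requirements are exactly the hypothesis $\delta_\beta>\max\{2(\beta(\beta^2-1))^{-1},4\beta^{-1}\}$, which finishes the proof.

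There is no serious obstacle: the content is entirely in the design of $w$, which by construction cancels the destabilizing factor $\dt p/p$ precisely on the short window around $t=\eta/k$ where the enhanced dissipation $\nu p\sim\nu k^2$ is too weak to do so, and is constant elsewhere — either because $\dt p\leq0$ (region (i)) or because $\nu p$ already dominates $\dt p/p$ (region (iii)). The only care needed is bookkeeping the sign of $k$ and the endpoints of the correction interval when matching the piecewise formula \eqref{def:explw} to the regimes (i)–(iii), and checking that the two numerical thresholds on $\delta_\beta$ emerge from region (iii) as stated.
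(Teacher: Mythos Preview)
Your proof is correct and follows essentially the same approach as the paper: you verify \eqref{bd:w}--\eqref{bd:wp-1} by direct inspection of the piecewise formula \eqref{def:explw} (the paper merely says this ``readily follows''), and for \eqref{bd:dtwdeltanup}--\eqref{bd:dtwdeltanu13} you split into the same three regimes, using $\dt p\leq 0$ on (i), the cancellation $\dt w/w=\dt p/p$ together with \eqref{bd:propm} on (ii), and \eqref{bd:trivnup}--\eqref{bd:p'pnu} on (iii) to produce exactly the two numerical constraints on $\delta_\beta$. The only cosmetic point is that the upper bound in \eqref{bd:w} is really $1+\beta^2\nu^{-2/3}$, which you correctly flag as ``of size $\beta^2\nu^{-2/3}$''; this is harmless since the bound is only used up to constants.
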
 
	
	Observe that the bound \eqref{bd:w} is exactly the maximal growth expected by solving explicitly \eqref{eq:ftoy}. Indeed, solving \eqref{eq:ftoy} one has 
	\begin{equation}
		(p^{-1}f)(t,k,\eta)=\frac{1}{\jap{k,\eta}^2}e^{-\nu\int_0^tp(\tau,k,\eta)d\tau}f^{in}(k,\eta),
	\end{equation}
	so that, since $p\leq \jap{t}^2\jap{k,\eta}^2$, by using \eqref{bd:propenheat} we get
	\begin{equation}
		\norm{f}_{H^s}=\norm{p(p^{-1}f)}_{H^s}\leq \jap{t}^2e^{-\frac{1}{12}\nu^\frac13 t}\norm{f^{in}}_{H^s}\lesssim \nu^{-\frac23}e^{-\frac{1}{24}\nu^{\frac13}t}\norm{f^{in}}_{H^s}.
	\end{equation}
	Whereas if we multiply \eqref{eq:ftoy} by $m^{-2}w^{-2}f$ we obtain 
	\begin{align*}
		\frac12 \Dt|w^{-1}m^{-1}f|^2=&-\left(\frac{\dt m}{m}+\frac{\dt w}{w}+\nu p-\frac{\dt p}{p}\right)|w^{-1}m^{-1}f|^2\\
		\leq&-\nu^{\frac13}|w^{-1}m^{-1}f|^2,
	\end{align*}
	where in the last line we have used \eqref{bd:dtwdeltanup}. Hence, one has $\norm{w^{-1}m^{-1}f}_{H^s}\leq e^{-\nu^\frac13t}\norm{f^{in}}_{H^s}$. Then, since $m\approx 1$, see \eqref{def:m}, from \eqref{bd:w} we infer  
	\begin{equation*}
		\norm{f}_{H^s}=\norm{wm(w^{-1}m^{-1}f)}_{H^s}\lesssim \nu^{-\frac23}\norm{w^{-1}m^{-1}f}_{H^s}\lesssim \nu^{-\frac23}e^{-\nu^{\frac13}t}\norm{f^{in}}_{H^s}. 
	\end{equation*}
	\begin{remark}
		As shown in the computations above, we see that by using the weight $p^{-1}$ or $(mw)^{-1}$ we obtain the same asymptotic behaviour. The advantage of the weight $w$ with respect to $p$ it is clearly the uniform bound \eqref{bd:w}, so that we do not have to pay regularity to translate the estimates from weighted to unweighted quantities. Notice also that, in view of \eqref{bd:dtwdeltanu13}, to control the growth given by $\dt p/p$ it is enough to have a dissipative term with constant coefficients. This property will be crucial for the density,  where we can hope to recover a similar dissipative term by exploiting its coupling with the divergence, see for example \eqref{eq:dtE}.
	\end{remark}
	Let us prove Lemma \ref{lemma:propw}
	\begin{proof}
		The proof of \eqref{bd:w} and \eqref{bd:wp-1} readily follows by the definition of $w$ given in \eqref{def:explw}. 
		
		To obtain \eqref{bd:dtwdeltanup} and \eqref{bd:dtwdeltanu13}, when $\dt p\leq 0$, namely $0\leq t \leq \eta/k$, in account of the property \eqref{bd:propm} there is nothing to prove. When $t\in [\eta/k,\eta/k+\beta \nu^{-\frac13}]$ we make use of the definition of $\dt w/w$, see \eqref{def:w}, and the property \eqref{bd:propm}. In all the other cases we have $|t-\eta/k|\geq \beta \nu^{-\frac13}$. Therefore, appealing to \eqref{bd:trivnup} and \eqref{bd:p'pnu}, we infer 
		\begin{align*}
			&\delta_\beta \nu p-\frac{\dt p}{p}\geq  \nu^{\frac13}(\delta_\beta \beta^2-2\beta^{-1})\geq \delta_\beta\nu^{\frac13}, \\
			& \delta_{\beta}\nu^{\frac13}-\frac{\dt p}{p}\geq  \nu^{\frac13}(\delta_\beta -2\beta^{-1})\geq \frac{\delta_\beta}{2}\nu^{\frac13}, 
		\end{align*}
		where we have also used that $\beta>2$ and  $ \max\{2(\beta(\beta^2-1))^{-1},4\beta^{-1}\}<\delta_\beta\leq 1$, hence the proof is over.
	\end{proof}
	We are now ready to introduce the weighted energy functional.
	\subsubsection{The weighted estimate and the proof of Theorem \ref{th:NSnoloss}}
	Having defined the weight $w$ in \eqref{def:explw}, in the following proposition we present the weighted estimate which allow us to prove Theorem \ref{th:NSnoloss}.
	\begin{proposition}
		\label{prop:bdnoloss}
		Let $s\geq 0$, $\mu \leq 1/2, \ M>0$ be such that $M\leq \min \{\mu^{-\frac12},\nu^{-\frac13}\}$. If $\rho^{in}\in H^{s+1}(\mathbb{T}\times \mathbb{R})$ and $\alpha^{in},\omega^{in}\in H^{s}(\mathbb{T}\times \mathbb{R})$ then
		\begin{equation}
			\label{bd:RAnoloss}
			\begin{split}
				\frac{1}{M}&\norm{(w^{-\frac34}p^{\frac12}{\hR})(t)}_{H^s}+\norm{(w^{-\frac34}{\hA})(t)}_{H^s}+\norm{(w^{-\frac34}(\hXi-\nu M^2\hA))(t)}_{H^s}\\
				&\lesssim e^{-\frac{1}{64}\nu^\frac13 t}(\norm{\nabla \rho^{in}}_{H^s}+\norm{\alpha^{in}}_{H^s}+\norm{\rho^{in}+\omega^{in}-\nu M^2\alpha^{in}}_{H^s}).
			\end{split}
		\end{equation}
	\end{proposition}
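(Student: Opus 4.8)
The plan is to run the weighted-energy argument in the proof of Lemma~\ref{lemma:dtEcouette} essentially verbatim, replacing the symbol $p^{-\frac34}$ used there to symmetrize the divergence and the auxiliary variable by $w^{-\frac34}$, with $w$ the Fourier multiplier of \eqref{def:explw}. Concretely, at each fixed frequency $(k,\eta)$ with $k\neq 0$ I would set
\begin{equation*}
	Z_1=\frac1M\jap{k,\eta}^s m^{-1}w^{-\frac34}p^{\frac12}\hR,\qquad Z_2=\jap{k,\eta}^s m^{-1}w^{-\frac34}\hA,\qquad Z_3=\jap{k,\eta}^s m^{-1}w^{-\frac34}(\hXi-\nu M^2\hA).
\end{equation*}
The factor $p^{\frac12}$ in $Z_1$ is forced by the same symmetrization as in Section~\ref{sec:invcompcouette}: with this choice the coupling between $Z_1$ and $Z_2$ coming from $\dt\hR=-\hA$ and from the $\frac{1}{M^2}p\hR$ term of \eqref{eq:NShA} is exactly the antisymmetric $\mp\frac1M\sqrt{p}$. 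The structural gain sits in the $Z_3$ equation: since $w$ is constant in time off the critical window $[\eta/k,\eta/k+\beta\nu^{-\frac13}]$, the weight derivative produces $-\frac34\frac{\dt w}{w}Z_3$, which is $\le 0$ everywhere (as $\dt w\ge 0$) and vanishes off the window; this replaces the term $-\frac34\frac{\dt p}{p}Z_3$ of \eqref{eq:Z3}, whose contribution $-\frac34\frac{\dt p}{p}\chi_{t<\eta/k}|Z_3|^2$ in \eqref{bd:I3} was exactly the source of the factor $\jap{k,\eta}^3$ and of the loss of derivatives in Proposition~\ref{th:enhancedcomp}.

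Next I would fix $\beta>2$ large and $\delta_\beta\in(\max\{2(\beta(\beta^2-1))^{-1},4\beta^{-1}\},\frac58]$ as in Lemma~\ref{lemma:propw}, keep $\gamma=\frac{M\nu^{\frac13}}{4}$ (so $\gamma\le\frac14$ and $M\nu^{\frac13}\le 1$ by hypothesis), and use the energy functional $E(t)$ of \eqref{def:Et1}--\eqref{def:Et2} verbatim, now built from the new $Z_1,Z_2,Z_3$. Coercivity \eqref{bd:coerc1}--\eqref{bd:coerc2} holds exactly as before, using $|\dt p|<p$, $\gamma p^{-1}\le\frac14$ (from $M\nu^{\frac13}\le 1$) and pairing the mixed term $\frac{M}{4}\frac{\dt p}{p^{3/2}}\Re(\bar{Z}_1 Z_2)$ with $M^2\frac{(\dt p)^2}{p^3}|Z_1|^2$; and since $m^{\pm1}$ are bounded multipliers, $\sum_k\int E\,d\eta$ is equivalent to the square of the left-hand side of \eqref{bd:RAnoloss}. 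I would then differentiate $E$ along \eqref{eq:NShR}--\eqref{eq:NShA} and \eqref{eq:dtXinuAphys1}, repeating the computation in the proof of Lemma~\ref{lemma:dtEcouette} with two changes. First, the weight derivatives now contribute $\frac12\frac{\dt p}{p}-\frac34\frac{\dt w}{w}$ to $\dt Z_1$ and $\frac{\dt p}{p}-\frac34\frac{\dt w}{w}$ to $\dt Z_2$, while the mixed term of \eqref{dt:mix1} still produces $-\frac14\frac{\dt p}{p}(|Z_2|^2-|Z_1|^2)$ (this piece comes only from the antisymmetric coupling $\frac1M\sqrt{p}$, not from the weights), so the residual diagonal growth on both $|Z_1|^2$ and $|Z_2|^2$ collapses to $\frac34(\frac{\dt p}{p}-\frac{\dt w}{w})$; by \eqref{bd:dtwdeltanu13} this is $\le\frac34\delta_\beta\frac{\dt m}{m}+\frac38\delta_\beta\nu^{\frac13}$, whose first part is absorbed by the $-\frac{\dt m}{m}|Z_i|^2$ already present ($\frac34\delta_\beta<1$). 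Second, the $-2\gamma p^{-1/2}\Re(\bar{Z}_1 Z_2)$ term again delivers $-\frac{\gamma}{M}|Z_1|^2=-\frac{\nu^{\frac13}}{4}|Z_1|^2$, i.e.\ a \emph{constant-coefficient} dissipation for the density, which has no $\nu p$ dissipation of its own; since $\frac38\delta_\beta<\frac14$ this controls the residual growth of $|Z_1|^2$, while for $|Z_2|^2$ the residual growth is controlled by \eqref{bd:propm} (in the form $\frac{\dt m}{m}+\mu p\ge\nu^{\frac13}$, using $\mu\ge\nu$) together with $\mathcal{D}_1=\frac{\gamma}{M}|Z_2|^2$, since $-\frac34+\frac98\delta_\beta<0$.

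All remaining dissipative error terms $\mathcal{D}_i$ (involving $\mu p$, $\nu p$, $\frac{\gamma}{M}$) and integrable error terms $\mathcal{I}_i$ are of exactly the same nature as in \eqref{def:D1}--\eqref{def:I6} --- together with the harmless terms from differentiating $M^2\frac{(\dt p)^2}{p^3}$, all of type $\frac{k^2|\dt p|}{p^3},\frac{|\dt p|^3}{p^4}\lesssim\frac{|k|^3}{p^{5/2}}$, uniformly integrable in $t$ --- and I would estimate them exactly as in the proof of Lemma~\ref{lemma:dtEcouette}, using $M^2\mu\le1$, $\nu M^2\le\mu M^2\le 1$, and that $\frac{k^2}{p},\frac{k^2}{p^{3/2}},\frac{|\dt p|^3}{p^4},\frac{\dt m}{m}$ all have time-integral bounded uniformly in $k,\eta,\nu$. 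The outcome is a differential inequality $\Dt E\le-\frac1{32}\nu^{\frac13}E+g(t)E$ with $\int_0^{\infty}g(t)\,dt\le C$, $C$ an \emph{absolute} constant --- this is the whole point: since the term $-\frac34\frac{\dt p}{p}\chi_{t<\eta/k}$ has disappeared, $\int g$ carries no power of $\jap{k,\eta}$. Gr\"onwall then gives $E(t)\le Ce^{-\frac1{32}\nu^{\frac13}t}E(0)$; summing in $k$, integrating in $\eta$, and noting that at $t=0$ one has $w=1$, $m^{\pm1}$ uniformly bounded and $p^{\frac12}(0)=\sqrt{k^2+\eta^2}\approx\jap{k,\eta}$, so $E(0)\lesssim\frac1{M^2}\norm{\nabla\rho^{in}}_{H^s}^2+\norm{\alpha^{in}}_{H^s}^2+\norm{\rho^{in}+\omega^{in}-\nu M^2\alpha^{in}}_{H^s}^2$, yields \eqref{bd:RAnoloss}.

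The main obstacle is bookkeeping rather than conceptual: one must check that after the mixed-term manipulations the residual diagonal growth really reduces to $\frac34(\frac{\dt p}{p}-\frac{\dt w}{w})$ on \emph{both} $|Z_1|^2$ and $|Z_2|^2$, so that the weaker bound \eqref{bd:dtwdeltanu13} --- with the constant $\nu^{\frac13}$ rather than the $\nu p$ of \eqref{bd:dtwdeltanup} --- suffices, and that the constant-coefficient dissipation $\frac{\gamma}{M}=\frac{\nu^{\frac13}}{4}$ handed to the density by the $\gamma$-mixed term strictly dominates it; this forces the joint choice of $\beta,\delta_\beta,\gamma$ and is what degrades the rate to $\frac1{64}$. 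Conceptually nothing beyond Lemma~\ref{lemma:propw} is needed: the multiplier $w$ is tailored so that a constant-coefficient dissipation --- all that the continuity equation can supply, via its coupling with the divergence --- already beats the inviscid growth $\dt p/p$.
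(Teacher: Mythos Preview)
Your proposal is correct and follows essentially the same route as the paper: the same weighted variables $Z_i^w$, the same energy functional \eqref{def:Et1}--\eqref{def:Et2} rebuilt from them, the same identification that $-\tfrac34\tfrac{\dt w}{w}|Z_3|^2\le 0$ removes the derivative-losing term $-\tfrac34\tfrac{\dt p}{p}\chi_{t<\eta/k}|Z_3|^2$, and the same use of Lemma~\ref{lemma:propw} to close the Gr\"onwall inequality. One small slip: it is not true that $w(0,k,\eta)=1$ for all $(k,\eta)$ (when $-\beta\nu^{-1/3}\le\eta/k\le 0$ one has $w(0)=1+\eta^2/k^2$), but since $w\ge 1$ always, $w^{-3/4}(0)\le 1$ and your bound on $E(0)$ still holds.
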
 
	Notice that with respect to Proposition \eqref{th:enhancedcomp} we have replaced the weight $p^{-\frac34}$ with $w^{-\frac34}$ for $A$ and $\Xi-\nu M^2A$. Instead, for the density, in view of \eqref{bd:wp-1} the same asymptotic behaviour is expected since $w^{-\frac34}p^{\frac12}\gtrsim p^{-\frac14}$.
	
	Appealing to Proposition \ref{prop:bdnoloss}, we first prove Theorem \ref{th:NSnoloss}. Then, we present the proof of Proposition \eqref{prop:bdnoloss}.
	
	\begin{proof}[Proof of Theorem \ref{th:NSnoloss}]
		To prove \eqref{bd:arhoomen}, by the change of coordinates $X=x-yt,\ Y=y$ we have 
		\begin{align*}
			\norm{\alpha(t)}_{L^2}+\frac{1}{M}\norm{\nabla \rho(t)}_{L^2}+\norm{\omega(t)}_{L^2}=&\norm{A(t)}_{L^2}+\frac{1}{M}\norm{(\nabla_L R)(t)}_{L^2}+\norm{\Omega(t)}_{L^2}\\
			=&\norm{\hA(t)}_{L^2}+\frac{1}{M}\norm{(p^{\frac12} \hR)(t)}_{L^2}+\norm{\hOmega(t)}_{L^2}.
		\end{align*}
		Then, observe that
		\begin{equation}
			\label{bd:trivOm}
			|\hOmega|\leq |\hXi-\nu M^2\hA|+|\hR|+\nu M^2|\hA|\leq |\hXi-\nu M^2\hA|+M\frac{1}{M}p^{\frac12}|\hR|+\nu M^2|\hA|.
		\end{equation} 
		Hence we get 
		\begin{align*}
			\norm{\alpha(t)}_{L^2}+&\frac{1}{M}\norm{\nabla \rho(t)}_{L^2}+\norm{\omega(t)}_{L^2}\\
			\lesssim&\norm{\hA(t)}_{L^2}+\frac{1}{M}\norm{(p^{\frac12} \hR)}_{L^2}+\norm{(\hXi-\nu M^2\hA)(t)}_{L^2}\\
			\lesssim &\nu^{-\frac12}\bigg(\norm{(w^{-\frac34}\hA)(t)}_{L^2}+\frac{1}{M}\norm{(w^{-\frac34}p^{\frac12} \hR)(t)}_{L^2}\\
			&\qquad +\norm{(w^{-\frac34}(\hXi-\nu M^2\hA))(t)}_{L^2}\bigg),
		\end{align*}
		where in the last line we have used \eqref{bd:w}, namely $w^{\frac34}\lesssim \nu^{-\frac12}$. Therefore, \eqref{bd:arhoomen} follows by combining the bound above with \eqref{bd:trivOm} and \eqref{bd:RAnoloss}.
		
		The inequality \eqref{bd:vrhoen} instead is obtained as follows. By the Helmholtz decomposition and the change of variable \eqref{def:movframe}, we first observe that
		\begin{align*}
			\norm{\vv(t)}_{L^2}+\frac{1}{M}\norm{\rho(t)}_{L^2}\leq& \norm{(\nabla_L\Delta_L^{-1}A)(t)}_{L^2}+\norm{(\nabla^\perp_L\Delta_L^{-1}\Omega)(t)}_{L^2}+\frac{1}{M}\norm{R(t)}_{L^2}\\
			\leq &\norm{(p^{-\frac12}\hA)(t)}_{L^2}+\norm{(p^{-\frac12}\hOmega)(t)}_{L^2}+\frac{1}{M}\norm{(p^{-\frac12}p^{\frac12}\hR)(t)}_{L^2}.
		\end{align*}
		Then, by using \eqref{bd:w}-\eqref{bd:wp-1} we have
		\begin{align*}
			\norm{\vv(t)}_{L^2}+\frac{1}{M}\norm{\rho(t)}_{L^2}\leq&\norm{(w^{\frac14}w^\frac12p^{-\frac12}(w^{-\frac34}\hA))(t)}_{L^2}+\norm{(w^{\frac14}w^\frac12p^{-\frac12}(w^{-\frac34}\hOmega))(t)}_{L^2}\\
			&+\frac{1}{M}\norm{(w^{\frac14}w^\frac12p^{-\frac12}(w^{-\frac34}p^\frac12\hR))(t)}_{L^2}\\
			\lesssim&\nu^{-\frac16} \bigg(\norm{(w^{-\frac34}\hA)(t)}_{L^2}+\norm{( w^{-\frac34}\hOmega)(t)}_{L^2}\\
			&\qquad \qquad + \norm{(w^{-\frac34}p^{\frac12}\hR)(t)}_{L^2}\bigg), 
		\end{align*}
		whence concluding the proof by combining the bound above with \eqref{bd:trivOm} and \eqref{bd:RAnoloss}.
	\end{proof} 
	It thus remain to prove Proposition \eqref{prop:bdnoloss}. We do not present the proof in detail since it will be similar to the one of Proposition \eqref{th:enhancedcomp}.
	\begin{proof}[Proof of Proposition \eqref{prop:bdnoloss}]
		Consider the weight defined in \eqref{def:m}, we introduce the following weighted variables
		\begin{equation}
			\label{def:ZiNS2}
			\begin{split}
				&Z^w_1(t)=\frac{1}{M}\langle k,\eta \rangle^s(m^{-1}w^{-\frac34}p^{\frac12}\widehat{R})(t), \quad Z^w_2(t)=\langle k,\eta \rangle^s(m^{-1}w^{-\frac34}\widehat{A})(t), \\
				&Z^w_3(t)=\langle k,\eta \rangle^s(m^{-1}w^{-\frac34}(\Xi-\nu M^2A))(t).
			\end{split}
		\end{equation}
		Notice that with respect to \eqref{def:ZiNScomp1}, for $Z^w_2$ and $Z^w_3$ we have replaced $p^{-\frac34}$ with $w^{-\frac34}$.  Then, in account of \eqref{eq:NShR}, \eqref{eq:NShA} and \eqref{eq:dtXinuAphys1}, we observe that 
		\begin{align}
			\label{eq:dtZw1}\dt Z^w_1=&-\left(\frac{\dt m}{m}+\frac34(\frac{\dt w}{w}-\frac{\dt p}{p})\right)Z^w_1-\frac14 \frac{\dt p}{p}Z_1^w-\frac{1}{M}p^\frac12Z^w_2,\\
			\label{eq:dtZw2}\dt Z^w_2=&-\left(\frac{\dt m}{m}+\mu p+\frac34(\frac{\dt w}{w}-\frac{\dt p}{p})\right)Z^w_2+\frac14\frac{\dt p}{p}Z^w_2\\
			\notag &+\left(\frac{1}{M}p^{\frac12}+\frac{2Mk^2}{p^{\frac32}}\right)Z^w_1-2\frac{ k^2}{p}Z^w_3-2\nu M^2\frac{k^2}{p}Z^w_2,\\
			\label{eq:dtZw3}\dt Z^w_3=&-\left(\frac{\dt m}{m}+\nu p\right)Z^w_3-\frac34\frac{\dt w}{w}Z^w_3+\nu(\mu-\nu)M^2pZ^w_2\\
			\notag &-\nu M^2\frac{\dt p}{p}Z^w_2-2\nu M^3 \frac{k^2}{p^{\frac32}}Z^w_1+2\nu M^2\frac{k^2}{p}Z^w_3+2\nu^2M^4 \frac{k^2}{p}Z^w_2.
		\end{align}
		Besides the first term on the left-hand side, the equations \eqref{eq:dtZw1}-\eqref{eq:dtZw2} and \eqref{eq:Z1}-\eqref{eq:Z2} have the same structure. The only difference between the equation \eqref{eq:dtZw3} and \eqref{eq:Z3} is that in \eqref{eq:dtZw3} we have $\displaystyle -\frac34 \frac{\dt w}{w}$ whereas in \eqref{eq:Z3} there is $\displaystyle  -\frac34 \frac{\dt p}{p}$. Hence, we define the energy functional as done in \eqref{def:Et1}-\eqref{def:Et2}, namely
		\begin{align}
			\label{def:Ewt1}	E^w(t)=\frac12 \bigg(&\left(1+M^2\frac{(\dt p)^2}{p^3}\right)|Z^w_1|^2(t)+|Z^w_2|^2(t)+ |Z^w_3|^2(t)\\
			\label{def:Ewt2}&+(\frac{M}{2} \frac{\dt p}{p^{\frac32}}\Re(\bar{Z}^w_1Z^w_2))(t)-\frac{M\nu^\frac13}{2}( p^{-\frac12}\Re(\bar{Z}^w_1Z^w_2))(t)\bigg),
		\end{align}
		which is  clearly coercive and satisfy the same bounds given in \eqref{bd:coerc1}-\eqref{bd:coerc2}.
		By analogous computations done to obtain \eqref{eq:dtE}, we have that
		\begin{equation}
			\label{eq:dtEw}\begin{split}
				\Dt E^w(t)=&-\left(\frac{\dt m}{m}+\mu p+\frac34(\frac{\dt w}{w}-\frac{\dt p}{p})\right)|Z^w_2|^2\\
				&-\left(\frac{\dt m}{m}+\frac{\nu^\frac13}{4}\big(1+2M^2\frac{k^2}{p^2}\big)+\frac34(\frac{\dt w}{w}-\frac{\dt p}{p})\right)|w_1|^2\\
				&-\left(\frac{\dt m}{m}+\nu p\right)|Z_3|^2+\sum_{i=1}^5\mathcal{D}^w_i+\sum_{i=1}^6\mathcal{I}^w_i,
			\end{split}
		\end{equation}
		where $\mathcal{D}_i^w$, for $i=1,\dots,5$, are defined as in \eqref{def:D1}-\eqref{def:D2} by replacing $Z_j$ with $Z_j^w$ for $j=1,2,3$. Analogously, $\mathcal{I}^w_i$, for $i=1,\dots, 6$ and $i\neq 3$, are defined as in \eqref{def:I11}-\eqref{def:I6}. Instead, the term $\mathcal{I}_3^w$ is given by 
		\begin{equation}
			\label{def:Iw3}
			\mathcal{I}^w_3=-\frac34 \frac{\dt w}{w}|Z_3^w|^2+2\nu M^2\frac{k^2}{p}|Z_3^w|^2.
		\end{equation}
		In particular, with respect to \eqref{def:I3}, there is the great advantage that $\dt w/w \geq 0$, meaning that we can bound $\mathcal{I}^w_3$ just with the last term in the right-hand side of \eqref{def:Iw3}. Therefore, thanks to \eqref{def:Iw3}, by making the same estimates given in \eqref{bd:D2}-\eqref{bd:D5} and \eqref{bd:I1}-\eqref{bd:I6} we infer 
		\begin{align}
			\label{eq:dtEw1}
			\Dt E^w(t)\leq &-\left(\frac{1}{16}(\frac{\dt m}{m}+\mu p)+\frac34(\frac{\dt w}{w}-\frac{\dt p}{p})\right)|Z^w_2|^2\\
			&-\left(\frac{1}{16}\left(\frac{\dt m}{m}+\nu^\frac13\right)+\frac34(\frac{\dt w}{w}-\frac{\dt p}{p})\right)|Z_1^w|^2\\
			\label{eq:dtEw3}&-\frac{\nu^\frac13}{8}\frac{M^2 (\dt p)^2}{p^3}|Z_1^w|^2\\
			\label{eq:dtEw4}&-\frac{1}{16}\left(\frac{\dt m}{m}+\nu p\right)|Z^w_3|^2+4\left(C_M\frac{k^2}{p}+\frac{1}{2}\frac{\dt m}{m}\right)E^w(t),
		\end{align}
		where to obtain \eqref{eq:dtEw3} we have used \eqref{bd:dtpp3}, whereas to get the last term in \eqref{eq:dtEw4} we have used the coercivity properties of the functional, see \eqref{bd:coerc1}-\eqref{bd:coerc2}. We now have to exploit the properties \eqref{bd:dtwdeltanup}-\eqref{bd:dtwdeltanu13}. In particular, in our case we have $\delta_\beta=1/12$, hence choosing $\beta>4$ in the definition of $w$, see \eqref{def:explw}, we get
		\begin{align*}
			\label{eq:dtEw11}
			\Dt E^w(t)\leq& -\frac{1}{16}\nu^\frac13|Z^w_2|^2-\frac{1}{32}\nu^\frac13|Z_1^w|^2-\frac{\nu^\frac13}{8}\frac{M^2 (\dt p)^2}{p^3}|Z_1^w|^2\\
			&-\frac{1}{16}\nu^\frac13|Z^w_3|^2+4\left(C_M\frac{k^2}{p}+\frac{1}{2}\frac{\dt m}{m}\right)E^w(t).\\
			\leq& -\frac{1}{32}\nu^\frac13E^w(t)+4\left(C_M\frac{k^2}{p}+\frac{1}{2}\frac{\dt m}{m}\right)E^w(t),
		\end{align*} 
		where in the last line we have used \eqref{bd:coerc2}. Therefore, by applying Gr\"onwall's Lemma we obtain 
		\begin{equation}
			\label{bd:EwGron}
			E^w(t)\lesssim e^{-\frac{1}{32}\nu^\frac13t }E^w(0).
		\end{equation}
		Then, in view of the definition \eqref{def:ZiNS2} and the coercivity of the functional, we also know that 
		\begin{align*}
			\sum_k \int E^w(t)d\eta \approx& 	\frac{1}{M}\norm{(w^{-\frac34}p^{\frac12}{\hR})(t)}_{H^s}^2+\norm{(w^{-\frac34}{\hA})(t)}_{H^s}^2\\
			&+\norm{(w^{-\frac34}(\hXi-\nu M^2\hA))(t)}_{H^s}^2,
		\end{align*}
		hence, thanks to \eqref{bd:EwGron}, the proof is over.
	\end{proof}
	\subsection*{Acknowledgements} We would like to thank J. Bedrossian, M. Coti Zelati and T. Gallay for useful comments and suggestions about this work. 
	
	PA, MD were partially supported through the INdAM-GNAMPA project \virg{\textit{Esistenza, limiti singolari e comportamento asintotico per equazioni Eulero/Navier–Stokes–Korteweg}}. MD was partially supported by the Royal Society through the (URF/R1/191492). PM acknowledges partial support by the PRIN-MIUR project 2015YCJY3A\_003 \virg{\textit{Hyperbolic Systems of Conservation Laws and Fluid Dynamics: Analysis and Applications}}.
	\bibliographystyle{siam}
	\bibliography{bibliohuge}
\end{document}